\newtheorem{theorem}{Theorem}
\newtheorem{lemma}[theorem]{Lemma}
\newtheorem{remark}[theorem]{Remark}
\newcommand{\A}{\mathbf{A}}
\newcommand{\B}{\mathbf{B}}
\newcommand{\C}{\mathbf{C}}
\newcommand{\e}{\mathbf{e}}
\newcommand{\M}{\mathbf{M}}
\renewcommand{\u}{\mathbf{u}}
\renewcommand{\v}{\mathbf{v}}
\newcommand{\w}{\mathbf{w}}
\newcommand{\x}{\mathbf{x}}
\newcommand{\X}{\mathbf{X}}
\newcommand{\Y}{\mathbf{Y}}
\newcommand{\z}{\mathbf{z}}
\newcommand{\Z}{\mathbf{Z}}
\newcommand{\id}{\mathbf{Id}}
\newcommand{\LAmbda}{\boldsymbol{\lambda}}
\newcommand{\LAMBDA}{\boldsymbol{\Lambda}}
\newcommand{\SIGMA}{\boldsymbol{\Sigma}}
\newcommand{\SIGMADITH}{\boldsymbol{\Sigma}^{\operatorname{dith}}}
\newcommand{\SIGMAADAP}{\boldsymbol{\Sigma}^{\operatorname{adap}}}
\newcommand{\Tau}{\boldsymbol{\tau}}
\newcommand{\XI}{\boldsymbol{\Xi}}
\newcommand{\THETA}{\boldsymbol{\Theta}}
\newcommand{\0}{\boldsymbol{0}}
\newcommand{\R}{\mathbb{R}}
\newcommand{\W}{\boldsymbol{W}}
\newcommand{\E}{\mathbb{E}}
\renewcommand{\P}[2][]{\;\mathbb{P}_{#1}\!\left[#2\right]}
\newcommand{\pnorm}[2]{\left\| #1 \right\|_{#2}}
\newcommand{\tnorm}[1]{{\left\vert\kern-0.25ex\left\vert\kern-0.25ex\left\vert #1 
    \right\vert\kern-0.25ex\right\vert\kern-0.25ex\right\vert}}
\newcommand{\abs}[2]{\left| #1 \right|_{#2}}
\newcommand{\round}[1]{\left( #1 \right)}
\newcommand{\curly}[1]{\left\{ #1 \right\} }
\newcommand{\diag}{\mathrm{diag}}
\newcommand{\sign}{\mathrm{sign}}
\newcommand{\tr}{\mathrm{Tr}}
\newcommand{\bP}{\mathbb{P}}
\newcommand{\Om}{\Omega}
\newcommand{\cF}{\mathcal{F}}
\newcommand{\cG}{\mathcal{G}}
\definecolor{tumor}{HTML}{E37222}
\definecolor{tumred}{RGB}{205,32,44}
\definecolor{tumblue}{RGB}{00,115,207}
\definecolor{tumgreen}{RGB}{162,173,000}
\newcommand{\sjoerd}[1]{{{#1}}}
\newcommand{\sjoerdnewred}[1]{\textcolor{red}{#1}}
\newcommand{\sjoerdgreen}[1]{{{#1}}}
\newcommand{\johannes}[1]{{#1}}
\newcommand{\johannesnew}[1]{{#1}}
\newcommand{\newJ}[1]{#1}
\newcommand{\newS}[1]{#1}
\newcommand{\blue}[1]{{ #1}}
\newcommand{\sjoerdredred}[1]{{ #1}}
\newcommand{\noteJ}[1]{{\textcolor{teal}{\textbf{JM:} #1 }}}
\title{Tuning-free one-bit covariance estimation \\ using data-driven dithering}
\author[$\star$]{Sjoerd Dirksen}
\author[$\dagger$,$\ddagger$]{Johannes Maly}
\affil[$\star$]{\normalsize Mathematical Institute, Utrecht University, The Netherlands}
\affil[$\dagger$]{Department of Mathematics, LMU Munich, Germany}
\affil[$\ddagger$]{Munich Center for Machine Learning (MCML)}
\date{}
\begin{document}

\maketitle

\begin{abstract}
\noindent We consider covariance estimation of any subgaussian distribution from finitely many i.i.d.\ samples that are quantized to one bit of information per entry. Recent work has shown that a reliable estimator can be constructed if uniformly distributed dithers on $[-\lambda,\lambda]$ are used in the one-bit quantizer. This estimator enjoys near-minimax optimal, non-asymptotic error estimates in the operator and Frobenius norms if $\lambda$ is chosen proportional to the largest variance of the distribution. However, this quantity is not known a-priori, and in practice $\lambda$ needs to be carefully tuned to achieve good performance. In this work we resolve this problem by introducing a tuning-free variant of this estimator, which replaces $\lambda$ by a data-driven quantity. We prove that this estimator satisfies the same non-asymptotic error estimates --- up to small (logarithmic) losses and a slightly worse probability estimate. \sjoerdredred{We also show that by using refined data-driven dithers that vary per entry of each sample, one can construct an estimator satisfying the same estimation error bound as the sample covariance of the samples before quantization --- again up logarithmic losses. Our proofs rely on a new version of the Burkholder-Rosenthal inequalities for matrix martingales, which is expected to be of independent interest.}   
\end{abstract}
  
%

\section{Introduction}
\label{sec:Introduction}

A classical problem in multivariate statistics is to estimate covariance matrices of high-dimensional probability distributions from a finite number of samples. Application areas in which covariance estimation is of relevance include financial mathematics \cite{ledoit2003improved}, pattern recognition \cite{dahmen2000structured}, and signal processing and transmission \sjoerd{\cite{haghighatshoar2018low,krim1996two}}. In the past decades, a whole line of research has been devoted to quantifying how many i.i.d.\ samples $\X_1,...,\X_n \overset{\mathrm{d}}{\sim} \X$ of a mean-zero\footnote{Since estimating and subtracting the mean of the distribution can be considered as a data pre-processing step, most works assume that $\X$ has zero mean for simplicity.} random vector $\X \in \R^p$ suffice to reliably approximate the covariance matrix $\SIGMA= \E(\X\X^T) \in \R^{p\times p}$ up to a given accuracy. It is widely known that the covariance matrix of well-behaved distributions, such as Gaussian distributions, can be estimated via the sample covariance matrix with a near-optimal sampling rate $n \sim p$ \cite{cai2010optimal}. In the last two decades, more sophisticated estimators have been devised to obtain similar results for heavy-tailed distributions (see, e.g., \cite{ke2019user,mendelson2018robust} and the references therein) 
or to further reduce the sample complexity by leveraging structures of $\SIGMA$ such as a sparse \cite{bickel2008covariance,chen2012masked,levina2012partial}, low rank \cite{ke2019user}, or Toeplitz structure \cite{cai2013optimal,kabanava2017masked,Maly2022}. 

In our recent work \cite{dirksen2021covariance}, we considered a very different challenge in covariance estimation, namely \textit{coarse quantization} of the samples, i.e., each entry of the vectors $\X_k$ is mapped to one or two bits of information by a chosen \textit{quantizer}. Coarse quantization occurs naturally in various applications. In signal processing, analog samples are collected via sensors and hence need to be quantized to finitely many bits before they can be digitally transmitted. The number of quantization bits determines the energy efficiency of a sensor to a significant degree. If the number of sensors is large, fine quantization can be prohibitive to use. Even in purely digital systems, coarse quantization can be attractive to reduce storage and processing costs, e.g., in the context of neural networks, see for instance the surveys \cite{guo2018survey,  deng2020model, gholami2021survey} and the recent rigorous mathematical studies \cite{lybrand2021greedy,maly2022simple}. Our current work is motivated by covariance estimation problems arising in signal processing, especially with large antenna arrays \cite{jacovitti1994estimation,roth2015covariance,bar2002doa,choi2016near,li2017channel}. We discuss this direction in more detail in Section~\ref{sec:RelatedWork}. 

In this paper we focus on memoryless one-bit quantizers of the form $Q:\R^p\to\{-1,1\}^p$, $Q(\X) = \sign(\X+\Tau)$, where $\sign$ is the sign function acting element-wise on its input and $\Tau\in \R^p$ represents a vector of quantization thresholds.\footnote{This is an example of a \emph{memoryless scalar quantizer}, which means that vectors are quantized entry-wise and vector entries are quantized independently of each other. This type of quantization is very popular in the literature, as it allows for a simple, energy efficient hardware implementation. For alternative, more elaborate vector quantization schemes we refer to, e.g., \cite{gersho2012vector,gray1987oversampled,tewksbury1978oversampled}.} If $\Tau$ is random, then this method is called one-bit quantization with \emph{dithering} in engineering literature. Our starting point is the work \cite{dirksen2021covariance}, which proposed a setting in which each sample $\X_k$ is quantized entry-wise to two bits
\begin{align} \label{eq:TwoBitSamples}
    \sign(\X_k + \Tau_k) \quad \text{ and } \quad \sign(\X_k + \bar{\Tau}_k),
\end{align}
where the dithering vectors $\Tau_1,\bar{\Tau}_1,\ldots,\Tau_n,\bar{\Tau}_n$ are independent and uniformly distributed in $[-\lambda,\lambda]^p$, for fixed $\lambda>0$. The authors of \cite{dirksen2021covariance} proposed to recover $\SIGMA$ by the estimator
\begin{align} \label{eq:TwoBitEstimator}
\SIGMADITH_n = \frac{1}{2}\johannesnew{\tilde{\SIGMA}_n} + \frac{1}{2}(\johannesnew{\tilde{\SIGMA}_n})^T,
\end{align}
where
\begin{align} \label{eq:AsymmetricEstimator}
\johannesnew{\tilde{\SIGMA}_n} = \frac{\lambda^2}{n}\sum_{k=1}^n \sign(\X_k + \Tau_k)\sign(\X_k + \bar{\Tau}_k)^T
\end{align}
is a re-scaled and asymmetric version of the sample covariance matrix of the samples in \eqref{eq:TwoBitSamples}. To formulate the estimation error estimate from \cite{dirksen2021covariance}, we denote for $\Z \in \R^{p\times p}$ its operator norm by $\pnorm{\Z}{}~=~\sup_{\u \in \mathbb{S}^{p-1}} \pnorm{\Z\u}{2}$, its entry-wise max-norm by $\pnorm{\Z}{\infty} = \max_{i,j} \abs{Z_{i,j}}{}$, and its maximum column norm \sjoerd{by} $\pnorm{\Z}{1\rightarrow 2} = \max_{j \in [p]} \pnorm{\z_j}{2}$, where $\z_j$ denotes the $j$-th column of $\Z$. We use $\lesssim_K$ to write an inequality that hides a multiplicative positive constant that only depends on $K$ and write $a \simeq_K b$ if $a \lesssim_K b \lesssim_K a$.

\begin{theorem}[{\cite[Theorem 4]{dirksen2021covariance}}] \label{thm:OperatorDitheredMask_Old}
Let $\X$ be a mean-zero, $K$-subgaussian\footnote{The formal definition of a subgaussian random vector is provided in Section \ref{sec:Notation} below.} vector with covariance matrix \sjoerd{$\SIGMA$}. Let $\X_1,...,\X_n \overset{\mathrm{\sjoerd{i.i.d.}}}{\sim} \X$. Let $\M \in [0,1]^{p\times p}$ be a fixed symmetric mask. If $\lambda^2 \gtrsim_{\sjoerdgreen{K}} \log(n) \|\SIGMA\|_{\infty}$, then with probability at least $1-e^{-t}$,  
	$$\pnorm{\M \odot \SIGMADITH_n - \M \odot \SIGMA}{}\lesssim_{\sjoerdgreen{K}}
	\|\M\|_{1\to 2}(\lambda\|\SIGMA\|^{1/2}+\lambda^2)\sqrt{\frac{\log(p)+t}{n}} + \lambda^2\|\M\| \frac{\log(p)+t}{n},$$
    where $\odot$ denotes the Hadamard (i.e., entry-wise) product.
	In particular, if $\lambda^2 \simeq_{\sjoerdgreen{K}} \log(n) \|\SIGMA\|_{\infty}$, then
	\begin{align}
	\label{eqn:OperatorDitheredMask}
	\begin{split}
	    &\pnorm{\M \odot  \SIGMADITH_n - \M \odot \SIGMA}{}  \lesssim_{\sjoerdgreen{K}}
	    \log(n) \|\M\|_{1\to 2}\sqrt{\frac{\|\SIGMA\| \ \|\SIGMA\|_{\infty}(\log(p)+t)}{n}} + \log(n)\|\M\|\|\SIGMA\|_{\infty}\frac{\log(p)+t}{n}. 
	\end{split}
	\end{align}	
%
%
%
\end{theorem}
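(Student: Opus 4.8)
The plan is to exploit the elementary dithering identity $\E_\tau[\lambda\,\sign(x+\tau)]=\mathrm{clip}_\lambda(x)$, valid for $\tau\sim\mathrm{Unif}[-\lambda,\lambda]$, where $\mathrm{clip}_\lambda(x)=\max(-\lambda,\min(\lambda,x))$. Applying it entrywise and using that $\Tau_k,\bar\Tau_k$ are independent yields $\E[\tilde\SIGMA_n\mid\X_1,\dots,\X_n]=\frac1n\sum_{k=1}^n\mathrm{clip}_\lambda(\X_k)\mathrm{clip}_\lambda(\X_k)^T$ and hence $\E[\tilde\SIGMA_n]=\SIGMA^{\mathrm{clip}}:=\E[\mathrm{clip}_\lambda(\X)\mathrm{clip}_\lambda(\X)^T]$. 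By symmetry of $\M$ we have $\pnorm{\M\odot\SIGMADITH_n-\M\odot\SIGMA}{}\le\pnorm{\M\odot\tilde\SIGMA_n-\M\odot\SIGMA}{}$, and I would split the right-hand side into a \emph{fluctuation} part $\M\odot(\tilde\SIGMA_n-\SIGMA^{\mathrm{clip}})$ and a \emph{bias} part $\M\odot(\SIGMA^{\mathrm{clip}}-\SIGMA)$.

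For the fluctuation part, write it as $\frac1n\sum_{k=1}^n W_k$ with $W_k=\lambda^2\,\M\odot(\u_k\v_k^T)-\M\odot\SIGMA^{\mathrm{clip}}$, where $\u_k=\sign(\X_k+\Tau_k)$, $\v_k=\sign(\X_k+\bar\Tau_k)$; this is an i.i.d.\ mean-zero matrix sequence, so I would apply the noncommutative (matrix) Bernstein inequality. For the uniform bound, $\M\odot(\u\v^T)=D_\u\M D_\v$ with $D_\u=\diag(\u)$, so $\pnorm{\M\odot(\u\v^T)}{}\le\pnorm{\M}{}$ for $\u,\v\in\{-1,1\}^p$, and $\pnorm{\M\odot\SIGMA^{\mathrm{clip}}}{}=\pnorm{\E[\M\odot(\mathrm{clip}_\lambda(\X)\mathrm{clip}_\lambda(\X)^T)]}{}\le\E[\pnorm{\mathrm{clip}_\lambda(\X)}{\infty}^2]\pnorm{\M}{}\le\lambda^2\pnorm{\M}{}$, giving $\pnorm{W_k}{}\le2\lambda^2\pnorm{\M}{}$. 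For the variance proxy, the key algebraic observation is that $Y_k:=\lambda^2\M\odot(\u_k\v_k^T)$ satisfies $Y_kY_k^T=\lambda^4 D_{\u_k}\M^2 D_{\u_k}$ and $Y_k^TY_k=\lambda^4 D_{\v_k}\M^2 D_{\v_k}$ --- the second dither drops out because $\v_{k,j}^2=1$ --- so $\E[W_kW_k^T]\preceq\E[Y_kY_k^T]=\lambda^4\,\M^2\odot\E[\u_k\u_k^T]$ and likewise for $\E[W_k^TW_k]$. Since $\M^2\succeq0$, the Schur (Hadamard) multiplier inequality gives $\pnorm{\M^2\odot N}{}\le\big(\max_i(\M^2)_{ii}\big)\pnorm{N}{}=\pnorm{\M}{1\to2}^2\pnorm{N}{}$, while $\E[\u_k\u_k^T]=\id+\lambda^{-2}\big(\SIGMA^{\mathrm{clip}}-\diag(\SIGMA^{\mathrm{clip}})\big)$ has operator norm $\le1+2\lambda^{-2}\pnorm{\SIGMA^{\mathrm{clip}}}{}$. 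Hence the variance proxy is $\le n\pnorm{\M}{1\to2}^2(\lambda^4+2\lambda^2\pnorm{\SIGMA^{\mathrm{clip}}}{})$, which --- once $\pnorm{\SIGMA^{\mathrm{clip}}}{}\lesssim\pnorm{\SIGMA}{}$ is known (see below) --- is $\lesssim n\pnorm{\M}{1\to2}^2(\lambda^4+\lambda^2\pnorm{\SIGMA}{})$. Feeding $B=2\lambda^2\pnorm{\M}{}$ and this variance proxy into matrix Bernstein yields, with probability $\ge1-e^{-t}$, the bound $\pnorm{\M}{1\to2}(\lambda\pnorm{\SIGMA}{}^{1/2}+\lambda^2)\sqrt{(\log p+t)/n}+\lambda^2\pnorm{\M}{}(\log p+t)/n$ for the fluctuation part --- precisely the asserted estimate.

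For the bias part I would use the Hadamard-product bound $\pnorm{\M\odot E}{}\le\pnorm{\M}{1\to2}\pnorm{E}{1\to2}\le\sqrt p\,\pnorm{\M}{1\to2}\pnorm{E}{\infty}$ with $E=\SIGMA^{\mathrm{clip}}-\SIGMA$, together with the entrywise estimate $|E_{ij}|\le2\,\E\big[|X_i||X_j|\,\1{|X_i|>\lambda\ \text{or}\ |X_j|>\lambda}\big]$; since the coordinates of $\X$ are $K$-subgaussian with $\pnorm{X_i}{\psi_2}\lesssim K(\SIGMA)_{ii}^{1/2}\le K\pnorm{\SIGMA}{\infty}^{1/2}$, standard subgaussian tail estimates give $|E_{ij}|\lesssim_K\pnorm{\SIGMA}{\infty}\exp\!\big(-c\lambda^2/(K^2\pnorm{\SIGMA}{\infty})\big)$ (polynomial prefactors absorbed). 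Under $\lambda^2\gtrsim_K\log(n)\pnorm{\SIGMA}{\infty}$ with a large enough constant this is $\le\pnorm{\SIGMA}{\infty}n^{-c'}$ for any prescribed $c'$; in particular $\pnorm{\SIGMA^{\mathrm{clip}}}{}\le\pnorm{\SIGMA}{}+\pnorm{E}{}\lesssim\pnorm{\SIGMA}{}$, which closes the gap above, and $\pnorm{\M\odot E}{}\lesssim_K\sqrt p\,\pnorm{\M}{1\to2}\pnorm{\SIGMA}{\infty}n^{-c'}$ is of lower order than the first term on the right-hand side. Combining the two parts gives the general estimate; the ``in particular'' statement follows by inserting $\lambda^2\simeq_K\log(n)\pnorm{\SIGMA}{\infty}$ and using $\pnorm{\SIGMA}{\infty}\le\pnorm{\SIGMA}{}$ to absorb $\lambda^2$ into $\lambda\pnorm{\SIGMA}{}^{1/2}$ up to a factor $\log(n)$.

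The step I expect to be the main obstacle is the variance estimate. A crude bound on the second moments of $W_k$, estimating $\pnorm{\M\odot(\u\v^T)}{}^2$ directly, produces a factor $\pnorm{\M}{}^2$ where the theorem needs $\pnorm{\M}{1\to2}^2$, and these differ by up to a factor $p$ (for instance when $\M$ is the all-ones mask). Getting the sharp scaling rests on the two structural facts above: the identity $Y_kY_k^T=\lambda^4 D_{\u_k}\M^2 D_{\u_k}$, which relies crucially on using \emph{two independent} dither vectors so that the second one cancels, and the Schur-multiplier inequality applied to the positive semidefinite matrix $\M^2$. A secondary technical point is making the bias genuinely negligible uniformly over the relevant range of $p$ --- this, rather than the fluctuation bound (whose constants are universal), is where subgaussianity of $\X$ beyond the automatic boundedness of the quantized samples, and the precise $K$-dependent lower bound on $\lambda^2$, come in. (One could instead condition on $\X_1,\dots,\X_n$ and run matrix Bernstein over the dithers only; that is arguably more natural but then forces a separate operator-norm control of $\frac1n\sum_k\mathrm{clip}_\lambda(\X_k)\mathrm{clip}_\lambda(\X_k)^T$, costing an extra event and an $n\gtrsim_K p$-type condition that the direct route avoids.)
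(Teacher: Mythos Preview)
Your approach is essentially correct and closely mirrors the argument the paper uses for its own operator-norm result, Theorem~\ref{thm:OperatorDitheredMask}. Note that Theorem~\ref{thm:OperatorDitheredMask_Old} is quoted from \cite{dirksen2021covariance} and not proved in this paper; nevertheless, the paper's proof of the adaptive analogue follows precisely your outline: split into bias and fluctuation, handle the fluctuation via a matrix concentration inequality, and for the variance proxy use the identity $(\M\odot\u\v^T)^T(\M\odot\u\v^T)=\M^2\odot\v\v^T$ (recorded as \eqref{eq:Msquared}) together with the Schur-multiplier bound $\|\M^2\odot N\|\le\|\M\|_{1\to2}^2\|N\|$ (Lemma~\ref{lem:HadamardOperatorNorm}). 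Since here $\lambda$ is deterministic and the summands are i.i.d., matrix Bernstein suffices in place of the Burkholder--Rosenthal machinery the paper needs for the martingale setting.

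There is, however, a genuine (if easily repaired) gap in your bias control. The estimate $\|\M\odot E\|\le\|\M\|_{1\to2}\|E\|_{1\to2}\le\sqrt{p}\,\|\M\|_{1\to2}\|E\|_\infty$ carries a factor $\sqrt{p}$, and your claim that the resulting term is ``of lower order'' fails when $p$ is large compared to any fixed power of $n$ --- which the theorem does not exclude. The same issue afflicts your route to $\|\SIGMA^{\mathrm{clip}}\|\lesssim\|\SIGMA\|$ via $\|E\|\le p\|E\|_\infty$. The paper avoids both problems. For the bias it uses instead $\|\M\odot E\|\le\|\M\|\,\|E\|_\infty$ (the argument at \eqref{eqn:normEstPos}), which is then absorbed into the \emph{second} term $\lambda^2\|\M\|(\log p+t)/n$ of the target bound with no $p$-dependent side condition. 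For the variance proxy it bypasses $\SIGMA^{\mathrm{clip}}$ entirely: by the second estimate in Lemma~\ref{lem:linftyBiasEst} one has $\lambda^2\E[\u\u^T]=\SIGMA-\diag(\SIGMA)+\lambda^2\id+R$ with $\|R\|_\infty\lesssim_K(\lambda^2+\|\SIGMA\|_\infty)e^{-c_2\lambda^2/\|\SIGMA\|_\infty}$, and then $\|\M^2\odot(\SIGMA-\diag(\SIGMA)+\lambda^2\id)\|\le\|\M\|_{1\to2}^2(2\|\SIGMA\|+\lambda^2)$ directly, while $\|\M^2\odot R\|\le\|\M\|^2\|R\|_\infty$ is absorbed as before. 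With these two substitutions your argument goes through cleanly.
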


Note that this result incorporates sparsity priors on $\SIGMA$ in the form of the mask $\M$ and is of the same shape as state-of-the-art performance estimates for masked estimators in the `unquantized' setting \cite{chen2012masked}. If $\SIGMA$ has no structure, then one can take a trivial all-ones mask (in which case $\|\M\|_{1\to 2}=\sqrt{p}$ and $\|\M\|=p$) to recover the minimax optimal rate up to log-factors. Analogous results for Frobenius- and entry-wise max-norm estimation errors can be found in \cite{yang2023plug}. \blue{As was already discussed in \cite{dirksen2021covariance}, the main weakness of Theorem~\ref{thm:OperatorDitheredMask_Old} lies in the required oracle knowledge of $\SIGMA$ for tuning the hyperparameter $\lambda$. Indeed, any choice of $\lambda$ that is too small or too large will either break the guarantees or deteriorate the error bound, and this reflects the actual behavior of \eqref{eq:TwoBitEstimator}: as $\lambda$ increases a U-shaped performance curve can consistently be observed in numerical experiments \cite{dirksen2021covariance,yang2023plug}. The goal of this work is to resolve this issue.}

\subsection{Notation}
\label{sec:Notation}

\blue{
Let us clarify the notation that will be used throughout this work before continuing. We write $[n] = \{ 1,...,n \}$ for $n\in \mathbb{N}$. \sjoerd{We use the notation $a \lesssim_{\alpha} b$ (resp.\ $\gtrsim_{\alpha}$) to abbreviate $a \le C_{\alpha}b$ (resp.\ $\ge$), for a constant $C_{\alpha} > 0$ depending only on $\alpha$. Similarly, we write $a \lesssim b$ if $a \le Cb$ for an absolute constant $C>0$.} \sjoerdgreen{We write $a\simeq b$ if both $a\lesssim b$ and $b\lesssim a$ hold (with possibly different implicit constants)}. The values of absolute constants $c,C > 0$ may vary from line to line. We let scalar-valued functions act entry-wise on vectors and matrices. In particular,
\begin{align*}{}
[\sign(\x)]_i = \begin{cases}
1 & \text{if } x_i\geq 0 \\
-1& \text{if } x_i<0,
\end{cases}
\end{align*}
for all $\x\in \R^p$ and $i\in [p]$. A mean-zero random vector $\X$  \sjoerd{in $\R^p$} is called $K$-subgaussian if $$\|\langle \X,\x\rangle\|_{\psi_2} \leq K \johannes{(\mathbb{E}\langle \X,\x\rangle^2)^{1/2}} \quad \mbox{ for all }  \x \in \sjoerd{\R^p},$$
where the subgaussian ($\psi_2$-)norm of a random variable $X$ is defined by 
\begin{align*}
    \pnorm{X}{\psi_2} = \inf \curly{ t>0 \colon \johannes{\E\round{ \exp\round{\frac{X^2}{t^2}} }} \le 2 }.
\end{align*}
For $1\leq q<\infty$, the $L^q$-norm of a random variable $X$ is denoted by 
\begin{align*}
    \| X \|_{L^q} = ( \E X^q )^{\frac{1}{q}}.
\end{align*}
For $\Z \in \R^{\sjoerdredred{p_1\times p_2}}$, we denote the operator norm by $\pnorm{\Z}{}~=~\sup_{\u \in \mathbb{S}^{p_2-1}} \pnorm{\Z\u}{2}$, the entry-wise max-norm by $\pnorm{\Z}{\infty} = \max_{i,j} \abs{Z_{i,j}}{}$, the Frobenius norm by $\| \Z \|_F~=~(\sum_{i,j} |Z_{i,j}|^2)^{1/2}$, the maximum column norm \sjoerd{by} $\pnorm{\Z}{1\rightarrow 2} = \max_{j \in [p_2]} \pnorm{\z_j}{2}$, where $\z_j$ denotes the $j$-th column of $\Z$\sjoerdredred{ , and, for $1\leq q<\infty$, the Schatten $q$-norm by $\|Z\|_{S^q} = (\tr[(Z^TZ)^{q/2}])^{1/q}$.} The Hadamard (i.e., entry-wise) product of two matrices is denoted by $\odot$. \newJ{We abbreviate Hadamard powers by $\Z^{\odot k} = \Z \odot \cdots \odot \Z$.} We denote the \sjoerd{all ones}-matrix by $\boldsymbol{1} \in \R^{p\times p}$.
The $\diag$-operator, when applied to \sjoerd{a} matrix, extracts the diagonal as \sjoerd{a} vector; when applied to a vector, it outputs the corresponding diagonal matrix. 
For symmetric \johannes{$\W,\Z\in \R^{p\times p}$ we write $\W \preceq \Z$ if $\Z-\W$} is positive semidefinite. We will use that for any conditional expectation 
$\E_{\mathcal{G}}=\E(\cdot|\mathcal{G})$,
	\begin{align} \label{eq:Kadison}
	\johannes{\mathbb{E}_{\mathcal{G}}\Z^T\mathbb{E}_{\mathcal{G}}\Z \preceq \mathbb{E}_{\mathcal{G}}(\Z^T\Z) \quad \text{for any } \Z\in \R^{p_1\times p_2}}{},
	\end{align}
	which is immediate from 
	$$\0 \preceq \mathbb{E}_{\mathcal{G}}[(\Z-\mathbb{E}_{\mathcal{G}}\Z)^T(\Z-\mathbb{E}_{\mathcal{G}}\Z)].$$ 
}

\blue{
\subsection{Organization}

We present our main results in Section~\ref{sec:MainResults}. Section~\ref{sec:Numerics} validates our theoretical results in numerical simulations. \sjoerdredred{We compare our results with related work in Section~\ref{sec:RelatedWork}.} All proofs can be found in Section \ref{sec:Proofs} and Appendix \ref{sec:ProofAppendix}.
}

\section{Main results}
\label{sec:MainResults}

\sjoerdredred{We will present results of two different flavours. First, we introduce a one-bit quantizer with \emph{data-adaptive dithering}: instead of using uniform dithers with a fixed parameter $\lambda$ that scales in terms of the (unknown) maximal variance $\|\SIGMA\|_{\infty}$, we will use data-driven dithering parameters that estimate (a scaled version of) $\|\SIGMA\|_{\infty}$. We then solve the main problem posed in this work by showing that an estimator based on these quantized samples satisfies essentially the same error estimate as in Theorem~\ref{thm:OperatorDitheredMask_Old}, see Theorem~\ref{thm:OperatorDitheredMask}. Second, we consider a refined scenario where we again use data-adaptive dithers, but allow the dithering scales to differ across the entries of each sample. This refined approach, inspired by \cite{chen2023parameter}, yields an estimator that satisfies a superior estimation error estimate, presented in Theorem~\ref{thm:OperatorDitheredMaskDecoupled}, but at the same time has major practical drawbacks, pointed out in Remark~\ref{rem:Decoupled}.}  

%
%
%
%
%
%
%

\subsection{Data adaptive dithering}

We assume\footnote{Note that we added an additional sample $\X_0$ for convenience, which is only used to provide an initial estimate of the adaptive dither.} that $\X_0,...,\X_n \overset{\mathrm{i.i.d.}}{\sim} \X$ and again consider quantized samples of the form
\begin{align} \label{eq:TwoBitSamplesAdaptive}
    \sign(\X_k + \Tau_k) \quad \text{ and } \quad \sign(\X_k + \bar{\Tau}_k),
\end{align}
for $1 \le k \le n$. However, now the dithering vectors $\Tau_k,\bar{\Tau}_k$ are independent and uniformly distributed in $[-\tilde\lambda_k,\tilde\lambda_k]^p$, where $\tilde\lambda_k = c \log(k) \cdot \lambda_k$, for $c > 0$ an absolute constant (to be specified later) and $\lambda_k$ defined by 
\begin{align} \label{eq:lambdaAdaptive}
    \lambda_{k}
    = \frac{1}{k} \sum_{j=0}^{k-1} \| \X_j \|_\infty
    = \frac{1}{k} \left( (k-1) \lambda_{k-1} + \| \X_{k-1} \|_\infty \right), \qquad 1\leq k\leq n.
\end{align}
%
Note that each $\lambda_k$ is an estimator of the maximal variance $\| \SIGMA \|_\infty$ of $\X$ based on the first $k$ samples. In particular, the dithers $\Tau_k,\bar{\Tau}_k$ used to quantize the $k$-th sample $\X_k$ only depend on the previous samples via $\tilde\lambda_k$ but are independent of $\X_k$ and all future samples \sjoerdredred{(in the language of stochastic processes, the sequence of dithers is \emph{predictable})}. Since $\lambda_{k}$ can be computed in real time from the previous iterate $\lambda_{k-1}$ and the sample $\X_{k-1}$ before quantization, cf.\ \eqref{eq:lambdaAdaptive}, the dithering parameters can be efficiently updated in hardware implementation. 

Based on the samples in \eqref{eq:TwoBitSamplesAdaptive}, we now define a covariance estimator by
\begin{align} \label{eq:TwoBitEstimatorAdaptive}
\SIGMAADAP_n = \frac{1}{2}\SIGMA'_n + \frac{1}{2}(\SIGMA'_n)^T,
\end{align}
where
\begin{align} \label{eq:AsymmetricEstimatorAdaptive}
\SIGMA'_n = \sum_{k=1}^n \frac{\tilde\lambda_k^2}{n} \sign(\X_k + \Tau_k)\sign(\X_k + \bar{\Tau}_k)^T.
\end{align}

\begin{remark}
\blue{The adaptive dithering introduced in \eqref{eq:lambdaAdaptive} comes with two obvious drawbacks:}
\begin{enumerate}
    \item In addition to storing the two-bit samples in \eqref{eq:TwoBitSamplesAdaptive} one also needs to keep track of the sequence $\tilde\lambda_k$ which has to be stored with higher precision, e.g., in standard $32$-bit representation. Hence, the proposed quantization scheme requires $(2p + 32)$ bits per sample vector. Compared with using high precision samples which would require $32$ bits per entry and thus $32p$ bits per vector, this still results in a massive reduction of storage and computing time if the ambient dimension $p$ is of moderate size.
    \item \blue{\sjoerdredred{Implementing an update rule like \eqref{eq:lambdaAdaptive} in hardware requires a feedback system, as is also used in more sophisticated noise shaping quantizers \cite{gray1987oversampled}.} 
    }
\end{enumerate}
\end{remark}

Our first result bounds the estimation error of $\SIGMAADAP_n$ in the max and Frobenius norms.

\begin{theorem}
\label{thm:FrobeniusDitheredMask}
For any $K>0$, there exist constants $C_1,C_2>0$ depending only on $K$ such that the following holds. Let $\X \in \mathbb R^p$ be a mean-zero, $K$-subgaussian vector with covariance matrix \sjoerd{$\SIGMA \in \R^{p\times p}$}.
Let $\X_0,...,\X_n \overset{\mathrm{i.i.d.}}{\sim} \X$ and let $\M \in [0,1]^{p\times p}$ be a fixed symmetric mask. If the adaptive dithering parameters satisfy $\tilde\lambda_k^2 = C_1 \log(k) \lambda_k^2$, where $\lambda_k$ is defined in \eqref{eq:lambdaAdaptive}, then for any $\theta \ge C_2\max\{\log(p),\log(n)\}$ we have with probability at least $1 - 2e^{-\theta}$   
    \begin{align*}
        \pnorm{\M \odot \SIGMAADAP_n - \M \odot \SIGMA}{\infty}
        \lesssim_{K} \| \M \|_\infty \| \SIGMA \|_\infty\log(p) \log(n)\left(\frac{\theta^{3/2}}{\sqrt{n}} + \frac{\theta^2}{n}\right)
    \end{align*}
    and 
    \begin{align*}
        \pnorm{\M \odot \SIGMAADAP_n - \M \odot \SIGMA}{F}
        \lesssim_{K} \| \M \|_F \| \SIGMA \|_\infty\log(p) \log(n)\left(\frac{\theta^{3/2}}{\sqrt{n}} + \frac{\theta^2}{n}\right).
    \end{align*}
\end{theorem}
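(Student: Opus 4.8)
The plan is to imitate the fixed-$\lambda$ analysis behind Theorem~\ref{thm:OperatorDitheredMask_Old}, replacing the i.i.d.\ decomposition by a martingale one (the dithers now depend on past samples) and prefacing it with a control of the data-driven scales $\tilde\lambda_k$.

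\textbf{Setup and decomposition.} Put $\cF_k = \sigma(\X_0,\dots,\X_k,\Tau_1,\bar\Tau_1,\dots,\Tau_k,\bar\Tau_k)$, so that $\tilde\lambda_k$ is $\sigma(\X_0,\dots,\X_{k-1})$-measurable, conditionally on $\cF_{k-1}$ the vectors $\Tau_k,\bar\Tau_k$ are independent and uniform on $[-\tilde\lambda_k,\tilde\lambda_k]^p$ (a \emph{predictable} dither sequence), and $\X_k$ is independent of $\cF_{k-1}$. Writing $\mathrm{clip}_\lambda(t) = \max\{-\lambda,\min\{\lambda,t\}\}$ and letting it act entrywise, the elementary identity $\E[\lambda\sign(t+\tau)] = \mathrm{clip}_\lambda(t)$ for $\tau\sim\mathrm{Unif}[-\lambda,\lambda]$ gives, with $\A_k = \sign(\X_k+\Tau_k)$ and $\B_k = \sign(\X_k+\bar\Tau_k)$,
$$
\Theta_k \;:=\; \E\Big[\tfrac{\tilde\lambda_k^2}{2}\big(\A_k\B_k^T+\B_k\A_k^T\big)\,\Big|\,\cF_{k-1}\Big] \;=\; \E\big[\mathrm{clip}_{\tilde\lambda_k}(\X_k)\,\mathrm{clip}_{\tilde\lambda_k}(\X_k)^T\,\big|\,\cF_{k-1}\big],
$$
using $\Tau_k\perp\bar\Tau_k$ given $\cF_{k-1}$. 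Hence $\SIGMAADAP_n - \SIGMA = \tfrac1n\sum_{k=1}^n(\Theta_k - \SIGMA) + \sum_{k=1}^n\D_k$ with $\D_k := \tfrac1n\big(\tfrac{\tilde\lambda_k^2}{2}(\A_k\B_k^T+\B_k\A_k^T) - \Theta_k\big)$, where the first sum is a \emph{clipping bias} and $(\D_k)$ is a matrix martingale difference sequence for $(\cF_k)$. I would bound the Hadamard-masked bias and fluctuation separately in max- and Frobenius norm, using $\|\M\odot\A\|_\infty \le \|\M\|_\infty\|\A\|_\infty$ and $\|\M\odot\A\|_F \le \|\M\|_F\|\A\|_\infty$.

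\textbf{Scales and bias.} From $\|\langle\X,\x\rangle\|_{\psi_2} \le K(\E\langle\X,\x\rangle^2)^{1/2}$ one gets $\E|X_i| \simeq_K (\E X_i^2)^{1/2}$ (the lower bound via a Paley--Zygmund-type argument against the fourth moment), hence $\E\|\X\|_\infty \in [c_K\|\SIGMA\|_\infty^{1/2},\, C_K\sqrt{\log p}\,\|\SIGMA\|_\infty^{1/2}]$ (using $\|\SIGMA\|_\infty = \max_i\E X_i^2$), and $\|\X\|_\infty$ is subgaussian with $\psi_2$-norm $\lesssim_K\sqrt{\log p}\,\|\SIGMA\|_\infty^{1/2}$. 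Since $\lambda_k$ is an empirical mean of $k$ i.i.d.\ copies of $\|\X\|_\infty$, a union bound over $k\le n$ of subgaussian deviations shows that on an event $E$ of probability at least $1-e^{-\theta}$ one has, simultaneously, $\lambda_k^2 \lesssim_K (\log p)(\theta+\log n)\|\SIGMA\|_\infty$ for all $k\le n$ and $\lambda_k \ge c_K\|\SIGMA\|_\infty^{1/2}$ for all $k\ge k_0$, with $k_0 \simeq_K (\log p)(\theta+\log n)$; in particular $\tilde\lambda_k^2 = C_1(\log k)\lambda_k^2 \lesssim_K (\log n)(\log p)(\theta+\log n)\|\SIGMA\|_\infty$ on $E$ for every $k$. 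For the bias, the pointwise bound $|\mathrm{clip}_\lambda(s)\mathrm{clip}_\lambda(t) - st| \le |st|\big(\1{|s|>\lambda}+\1{|t|>\lambda}\big)$ together with Cauchy--Schwarz and subgaussian tails gives $\|\Theta_k - \SIGMA\|_\infty \le g(\tilde\lambda_k)$ with $g(\lambda)\lesssim_K\|\SIGMA\|_\infty e^{-c\lambda^2/\|\SIGMA\|_\infty}$, while trivially $\|\Theta_k-\SIGMA\|_\infty\le 2\|\SIGMA\|_\infty$ always. On $E$ and for $k\ge k_0$ we have $\tilde\lambda_k^2 \ge C_1 c_K^2(\log k)\|\SIGMA\|_\infty$, so choosing $C_1$ large enough (depending only on $K$) forces $g(\tilde\lambda_k)\lesssim_K\|\SIGMA\|_\infty k^{-2}$; summing and using the trivial bound for $k<k_0$ yields $\|\tfrac1n\sum_k(\Theta_k-\SIGMA)\|_\infty \lesssim_K \|\SIGMA\|_\infty(k_0+1)/n \lesssim_K \|\SIGMA\|_\infty(\log p)(\theta+\log n)/n$, which (after multiplying by $\|\M\|_\infty$, resp.\ $\|\M\|_F$) is of smaller order than the claimed error.

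\textbf{Fluctuation via Burkholder--Rosenthal.} Since $\A_k,\B_k\in\{-1,1\}^p$ and $|(\Theta_k)_{ij}|\le\|\SIGMA\|_\infty$, the increments satisfy $|(\D_k)_{ij}| \lesssim \tilde\lambda_k^2/n$ and $\E[(\D_k)_{ij}^2\mid\cF_{k-1}] \lesssim \tilde\lambda_k^4/n^2$, hence $\sum_k\E[(\D_k)_{ij}^2\mid\cF_{k-1}] \lesssim \tfrac1n\max_k\tilde\lambda_k^4$. For the max-norm bound I would apply the scalar Burkholder--Rosenthal inequality to the martingale $\sum_k M_{ij}(\D_k)_{ij}$, bound the $L^q$-norms of the (random) predictable quadratic variation and maximal jump by restricting to $E$ (where $\max_k\tilde\lambda_k^2 \lesssim_K (\log n)(\log p)(\theta+\log n)\|\SIGMA\|_\infty$ holds deterministically) and handling $E^c$ separately, take $q\simeq\theta+\log p$, and combine Markov's inequality with a union bound over the $p^2$ entries; since $\theta\ge C_2\max\{\log p,\log n\}$ this yields, with probability at least $1-e^{-\theta}$,
$$
\big\|\M\odot\textstyle\sum_k\D_k\big\|_\infty \;\lesssim_K\; \|\M\|_\infty\|\SIGMA\|_\infty(\log p)(\log n)\Big(\frac{\theta^{3/2}}{\sqrt n} + \frac{\theta^2}{n}\Big).
$$
For the Frobenius-norm bound I would run the identical computation for the Hilbert-space ($\ell^2(\R^{p\times p})$-, i.e.\ Frobenius-) valued martingale $\sum_k\M\odot\D_k$, using $\E[\|\M\odot\D_k\|_F^2\mid\cF_{k-1}] = \sum_{ij}M_{ij}^2\E[(\D_k)_{ij}^2\mid\cF_{k-1}] \lesssim \|\M\|_F^2\tilde\lambda_k^4/n^2$ and $\|\M\odot\D_k\|_F \lesssim \|\M\|_F\tilde\lambda_k^2/n$ together with the Hilbert-valued Burkholder--Rosenthal inequality; this gives the same bound with $\|\M\|_\infty$ replaced by $\|\M\|_F$ and without any union bound. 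Adding the bias estimate proves both inequalities on $E$, with total failure probability at most $2e^{-\theta}$.

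\textbf{Main obstacle.} I expect the crux to be the interplay between the data-dependence of $\tilde\lambda_k$ and the martingale concentration: one needs a \emph{lower} bound on $\tilde\lambda_k$ to annihilate the clipping bias, which can only hold once $k\gtrsim_K(\log p)(\theta+\log n)$, \emph{and} an \emph{upper} bound valid for \emph{every} $k\le n$ to control the predictable quadratic variation and the maximal jump, and both must be carried through the $q$-th moments ($q\simeq\theta$) without losing more than the stated logarithmic factors. Making the Burkholder--Rosenthal step rigorous although the a priori bounds on its inputs are themselves random (handled by splitting on $E$, or by a truncation/stopping argument on $\tilde\lambda_k$) is the accompanying technical nuisance; the remaining estimates parallel the fixed-$\lambda$ analysis behind Theorem~\ref{thm:OperatorDitheredMask_Old}.
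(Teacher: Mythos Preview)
Your proposal is essentially correct and follows the same architecture as the paper: the same filtration and bias/martingale split, the same two-sided control of the data-driven scales (upper bound for all $k$, lower bound once $k\gtrsim_K(\log p)\theta$), and the same entrywise application of the scalar Burkholder--Rosenthal inequality for the max-norm fluctuation. Two points are worth flagging as differences of execution rather than of idea.

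First, for the Frobenius bound the paper does not run a separate Hilbert-space-valued Burkholder--Rosenthal argument; it simply observes $\|\M\odot\A\|_F\le\|\M\|_F\|\A\|_\infty$ and reads the Frobenius estimate off the max-norm estimate in one line. Your Hilbert-valued route is correct but unnecessary.

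Second, the paper does not ``restrict to $E$ and handle $E^c$ separately'' when feeding the random quantities $\tilde\lambda_k$ into Burkholder--Rosenthal. Instead it turns the tail bound $\mathbb{P}(\tilde\lambda_k^2>C_K\theta\log(p)\log(k)\|\SIGMA\|_\infty)\le 2e^{-\theta}$ into a genuine moment bound $\|\tilde\lambda_k^2\|_{L^q}\lesssim_K q\log(p)\log(k)\|\SIGMA\|_\infty$ via the equivalence between $L^q$-bounds and tail bounds (Lemma~\ref{lem:LqtoTailBound}), and then estimates $\|(\sum_k\E_{k-1}|\xi_k|^2)^{1/2}\|_{L^q}$ and $(\sum_k\|\xi_k\|_{L^q}^q)^{1/q}$ directly. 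This neatly sidesteps the ``technical nuisance'' you correctly identified: there is no need for a stopping/truncation argument, because the subgaussian concentration of $\lambda_k$ already yields moment control at every order $q$. Your splitting approach can be made to work, but the paper's route is cleaner and is the standard way such randomness is absorbed into $L^q$-martingale inequalities.
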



Our second result, which is the main result of this work, estimates the operator norm error.

\begin{theorem} \label{thm:OperatorDitheredMask}
For any $K > 0$, there exist constants $C_1,C_2>0$ depending only on $K$ such that the following holds. Let $\X \in \R^p$ be a mean-zero, $K$-subgaussian vector with covariance matrix \sjoerd{$\SIGMA \in \R^{p\times p}$}.
Let $\X_0,...,\X_n \overset{\mathrm{i.i.d.}}{\sim} \X$ and let $\M \in [0,1]^{p\times p}$ be a fixed symmetric mask. If $\tilde\lambda_k^2 = C_1 \log(k) \lambda_k^2$, where $\lambda_k$ is defined in \eqref{eq:lambdaAdaptive}, then for any $\theta \ge C_2\max\{\log(p),\log(n)\}$ we have with probability at least $1 - 2e^{-\theta}$   
	$$\pnorm{\M \odot \SIGMAADAP_n - \M \odot \SIGMA}{}\lesssim_{K} 
	\log(p)\log(n)\left(\theta^{5/2}\frac{\|\M\|_{1\to 2}}{\sqrt{n}}\|\SIGMA\|_{\infty}^{1/2}\|\SIGMA\|^{1/2}+\theta^3\frac{\|\M\|}{n} \|\SIGMA\|_{\infty}\right).
	$$
\end{theorem}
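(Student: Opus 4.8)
The plan is to reduce the analysis to the "oracle" estimate of Theorem~\ref{thm:OperatorDitheredMask_Old} by controlling the deviation of the data-driven scale $\tilde\lambda_k$ from a good deterministic choice, and to handle the resulting sum-of-martingale-differences with a matrix Burkholder--Rosenthal inequality. First I would fix the event
$$
\mathcal E = \bigcap_{k=1}^n \Big\{ c_1 \log(k)\|\SIGMA\|_\infty \le \tilde\lambda_k^2 \le c_2 \log(k)\log(p)\|\SIGMA\|_\infty \Big\},
$$
and argue that $\P{\mathcal E^c}$ is small: since $\lambda_k = \tfrac1k\sum_{j<k}\|\X_j\|_\infty$ is an average of i.i.d.\ terms, and $\|\X_j\|_\infty \simeq_K \sqrt{\log p}\,\|\SIGMA\|_\infty^{1/2}$ with high probability (subgaussian maximal inequality, plus a matching lower bound on $\E\|\X_j\|_\infty$ coming from a single coordinate of largest variance), a Bernstein/Chernoff bound for the average gives that $\lambda_k^2 \simeq_K \log(p)\|\SIGMA\|_\infty$ on an event of probability $1 - n e^{-c\theta}$ once $\theta \gtrsim \max\{\log p,\log n\}$ — the factor $n$ absorbed by the slack in $\theta$. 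The lower bound on $\lambda_k$ is what secures validity of the dither (the analogue of the hypothesis $\lambda^2 \gtrsim_K \log(n)\|\SIGMA\|_\infty$), and the upper bound is what keeps the error terms from blowing up.

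Next I would decompose $\SIGMA'_n - \SIGMA = \sum_{k=1}^n \tfrac1n \big( \tilde\lambda_k^2 \sign(\X_k+\Tau_k)\sign(\X_k+\bar\Tau_k)^T - \SIGMA\big)$ and use that, conditionally on the past (in particular on $\tilde\lambda_k$, which is predictable), $\E\big[\tilde\lambda_k^2 \sign(\X_k+\Tau_k)\sign(\X_k+\bar\Tau_k)^T \,\big|\, \mathcal F_{k-1}\big]$ equals $\SIGMA$ up to the usual off-diagonal-versus-diagonal correction, exactly as in \cite{dirksen2021covariance}, provided $\tilde\lambda_k \ge \|\X_k\|_\infty$ (again the role of the lower bound on the scale, now needed pointwise, which forces the extra $\log(k)$ factor $c\log(k)$ in $\tilde\lambda_k = c\log(k)\lambda_k$: one needs $\lambda_k$ to dominate $\|\X_{k}\|_\infty$, not merely $\|\X_j\|_\infty$ for $j<k$, with overwhelming probability, and the log-factor buys this union bound). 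Thus on $\mathcal E$ the summands form a matrix martingale difference sequence after symmetrization, and I would invoke the new matrix Burkholder--Rosenthal inequality advertised in the abstract to bound $\E\|\SIGMAADAP_n - \SIGMA\|^q$ (or directly a high-probability tail) in terms of (i) the predictable quadratic variation $\big\|\sum_k \tfrac1{n^2}\E[ \cdots \,|\, \mathcal F_{k-1}]\big\|$, which on $\mathcal E$ is bounded by $\log(n)^2\log(p)^2 \|\SIGMA\|_\infty (\|\SIGMA\| + \|\SIGMA\|_\infty)/n$ up to constants after passing through the mask via $\|\M\|_{1\to2}$ and $\|\M\|$, and (ii) a maximal term $\big\| \max_k \tfrac1n \tilde\lambda_k^2 \,\sign(\cdots)\sign(\cdots)^T\big\|$ contributing the $1/n$ term with $\|\M\|\|\SIGMA\|_\infty$; the powers of $\theta$ ($\theta^{5/2}$ and $\theta^3$) arise from integrating the Schatten-norm moment bound to a tail and from the extra $\sqrt\theta$-type losses of Burkholder--Rosenthal relative to the scalar Bernstein bound in Theorem~\ref{thm:OperatorDitheredMask_Old}.

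The last step is symmetrization: since $\SIGMAADAP_n = \tfrac12(\SIGMA'_n + (\SIGMA'_n)^T)$, the operator-norm error of $\SIGMAADAP_n$ is at most that of $\SIGMA'_n$, so the bound on $\|\M\odot\SIGMA'_n - \M\odot\SIGMA\|$ transfers directly (using $\M$ symmetric). Combining the estimate on $\mathcal E$ with $\P{\mathcal E^c} \le e^{-\theta}$ yields the claimed bound with failure probability $2e^{-\theta}$.

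The main obstacle I anticipate is item (ii)/the interplay between the two logarithmic sources: one $\log(p)$ comes from $\lambda_k \simeq \sqrt{\log p}\,\|\SIGMA\|_\infty^{1/2}$ (so $\tilde\lambda_k^2 \simeq \log(p)\log(k)\|\SIGMA\|_\infty$, i.e.\ the scale is a $\sqrt{\log p}$-factor larger than the oracle $\lambda$ of Theorem~\ref{thm:OperatorDitheredMask_Old} — this is the price of not knowing $\|\SIGMA\|_\infty$ and only its $\log p$-inflated empirical proxy $\|\X\|_\infty$), and the other from the failure of $\mathcal E$; tracking these honestly through the Burkholder--Rosenthal step, and in particular verifying that the predictable quadratic variation is genuinely controlled by the \emph{worst-case} value of $\tilde\lambda_k^2$ over $k$ without losing another factor of $n$, is where the technical work concentrates. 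A secondary subtlety is proving the new matrix Burkholder--Rosenthal inequality itself in a form with good (ideally $\sqrt q$) dependence on the moment order so that the $\theta$-powers stay polynomial.
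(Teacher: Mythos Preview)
Your high-level strategy---split into a bias term plus a martingale term and handle the latter with the matrix Burkholder--Rosenthal inequality (Theorem~\ref{thm:matrixBRintro})---matches the paper's. But two steps in your execution would not go through as written.

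\textbf{Conditioning on $\mathcal E$ is incompatible with the moment inequality.} You propose to restrict to $\mathcal E$, observe that the predictable quadratic variation is bounded there, and then invoke Theorem~\ref{thm:matrixBRintro}. But Theorem~\ref{thm:matrixBRintro} bounds $(\E\|\sum_k\XI_k\|^q)^{1/q}$ by $L^q$-norms of the predictable square function and of $\max_k\|\XI_k\|$; knowing these quantities are bounded \emph{on} $\mathcal E$ says nothing about their $L^q$-norms, since $\tilde\lambda_k$ is unbounded on $\mathcal E^c$. The paper never conditions: it bounds $\|(\sum_k\E_{k-1}(\XI_k^T\XI_k))^{1/2}\|$ and $\|\XI_k\|$ \emph{pointwise} by expressions in the $\tilde\lambda_k$, then converts the tail bounds of Lemma~\ref{lem:SumBounds} on those expressions into $L^q$-bounds via Lemma~\ref{lem:LqtoTailBound}, feeds these into Theorem~\ref{thm:matrixBRintro}, and only at the very end passes back from the resulting $L^q$-bound to a tail bound. (Your approach could be rescued by truncating each $\XI_k$ with the \emph{predictable} indicator $\mathbf 1_{\mathcal E_k}$, $\mathcal E_k\in\mathcal F_{k-1}$, and applying BR to the truncated sequence, but you would still need to compute $\E_{k-1}(\XI_k^T\XI_k)$ explicitly as the paper does.)

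\textbf{There is no uniform lower bound on $\lambda_k$.} Your event $\mathcal E$ requires $\lambda_k^2\gtrsim\|\SIGMA\|_\infty$ for every $k\ge 1$, but Lemma~\ref{lem:linfConcentration} gives this only with probability $1-2e^{-ckL^2/(K^2\log p)}$, which is useless for small $k$. The paper handles this by splitting at $k_\theta\simeq \theta K^2L^{-2}\log p$ in Lemma~\ref{lem:SumBounds}: for $k\le k_\theta$ no lower bound is claimed and the bias terms are estimated crudely (contributing $\lesssim k_\theta\log(k_\theta)\|\SIGMA\|_\infty$), while for $k>k_\theta$ the lower bound kicks in and makes the bias $(\tilde\lambda_k^2+\|\SIGMA\|_\infty)e^{-c_2\tilde\lambda_k^2/\|\SIGMA\|_\infty}$ summable.

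Two smaller corrections. First, $\E_{k-1}[\tilde\lambda_k^2\Y_k\bar\Y_k^T]$ is never exactly $\SIGMA$, and the role of the $\log(k)$ factor is not to enforce ``$\tilde\lambda_k\ge\|\X_k\|_\infty$'' pointwise (this condition is never used and makes no sense inside $\E_{k-1}$) but to make the bias bound of Lemma~\ref{lem:linftyBiasEst} decay like $k^{-2}$ and hence be summable. The diagonal correction you mention belongs to $\E_{k-1}[\Y_k\Y_k^T]$, which enters only in the predictable quadratic variation, not in the bias term. Second, $\lambda_k^2\simeq_K\log(p)\|\SIGMA\|_\infty$ is too strong: the lower bound on $\E\|\X\|_\infty$ in Lemma~\ref{lem:linfExpectation} is only $\gtrsim_K\sqrt{\|\SIGMA\|_\infty}$ (from a single coordinate, no $\log p$), so you only get $\|\SIGMA\|_\infty\lesssim_K\lambda_k^2\lesssim_K\log(p)\|\SIGMA\|_\infty$.
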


\sjoerdredred{If $\M$ is the all-ones matrix, then $\| \M \| = \| \M \|_F = p$ and $\| \M \|_{1\to 2} = \sqrt{p}$ so that the bounds in Theorems \ref{thm:FrobeniusDitheredMask} and \ref{thm:OperatorDitheredMask} recover the minimax optimal rates in \cite{cai2010optimal} (up to logarithmic factors). If $\SIGMA$ has a known (approximate) sparsity pattern, in the sense that we can find an (approximately) sparse $\M$ such that $\M\odot \SIGMA - \SIGMA$ has small norm, then Theorems \ref{thm:FrobeniusDitheredMask} and \ref{thm:OperatorDitheredMask} can yield better error rates for estimating $\SIGMA$. For instance, in certain applications it is known that the entries of $\SIGMA$ decay away from the diagonal, so that banding and tapering masks are effective.} \blue{For a further discussion of masked covariance estimation we refer the reader to \cite{chen2012masked,levina2012partial} \sjoerdredred{and the references therein.}}

Remarkably, up to small losses in terms of logarithmic factors and a slightly worse (but still exponentially decaying) failure probability, the error bounds in Theorems~\ref{thm:FrobeniusDitheredMask} and \ref{thm:OperatorDitheredMask} match the ones that were previously obtained for the estimator $\SIGMADITH_n$ -- which required oracle knowledge of the maximal variance $\|\SIGMA\|_{\infty}$ of the distribution, c.f.\ \cite[Theorem 1]{yang2023plug} and Theorem~\ref{thm:OperatorDitheredMask_Old}, respectively.

\begin{remark}
\blue{Let us highlight two important points regarding the constant $C_1$ appearing in Theorems \ref{thm:FrobeniusDitheredMask} and \ref{thm:OperatorDitheredMask}:}
\begin{enumerate}
    \item In both theorems it is sufficient to pick $\tilde\lambda_k^2 \geq C_1 \log(k) \lambda_k^2$, i.e., in practice the scaling $C_1$ does not need to be precisely known. Indeed, setting $\tilde\lambda_k^2 = c \cdot C_1 \log(k) \lambda_k^2$ for $c \ge 1$ only results in an additional scaling of the error estimates by $c$. The constant $C_1$ in Theorems \ref{thm:FrobeniusDitheredMask} and \ref{thm:OperatorDitheredMask} defines the smallest feasible choice of $\tilde\lambda_k$.
    \item \blue{Since $C_1$ depends on the subgaussian norm $K$ of $\X$, the proposed estimator $\SIGMAADAP_n$ is not fully data-adaptive over all possible input distributions. Nevertheless, if one fixes a certain family of modeling distributions with fixed $K$ such as centered Gaussians (as often done in engineering applications), $C_1$ can be approximated using an arbitrary representative of the family and $\SIGMAADAP_n$ becomes fully data-adaptive on this family.}
\end{enumerate}
\end{remark}

Let us now outline the proofs of Theorems~\ref{thm:FrobeniusDitheredMask} and \ref{thm:OperatorDitheredMask} which are detailed in Section \ref{sec:Proofs}. 
Let $\tnorm{\cdot}$ be either the max, Frobenius, or operator norm. We consider the filtration defined by
\begin{equation}
\label{eqn:filtrationDef}
\mathcal{F}_k = \sigma(\X_0,\X_1,\ldots,\X_k,\Tau_1,\ldots,\Tau_k,\bar{\Tau}_1, \ldots,\bar{\Tau}_k), \qquad k=0,1,\ldots,n,
\end{equation}
and let $\E_k(\cdot)=\E(\cdot|\mathcal{F}_k)$ be the associated conditional expectations. The first step is to use the triangle inequality to estimate 
	$$\tnorm{\M \odot \SIGMAADAP_n - \M \odot \SIGMA} \leq \tnorm{\M \odot \SIGMA'_n - \M \odot \SIGMA}$$
	and make the split
	\begin{align}
	\label{eqn:splitExpDithered}
	& \tnorm{\M \odot \SIGMA'_n - \M \odot \SIGMA}\nonumber\\
	& \qquad \le \tnorm{ \M \odot \Big( \SIGMA'_n - \sum_{k=1}^n \E_{k-1}( \tfrac{\tilde\lambda_k^2}{n} \Y_k\bar{\Y}_k^T) \Big) } + \tnorm{ \M \odot \Big( \sum_{k=1}^n \E_{k-1}( \tfrac{\tilde\lambda_k^2}{n} \Y_k\bar{\Y}_k^T) - \SIGMA \Big)},
	\end{align}
where we abbreviate $\Y_k = \sign(\X_k + \Tau_k)$ and $\bar{\Y}_k = \sign(\X_k + \bar{\Tau}_k)$. We can think of the second term on the right-hand side as a `bias term'. The argument to control it is nearly identical for each of the three norms. We will develop this part first in Section~\ref{sec:biasControl}, resulting in Lemma~\ref{lem:biasControlUnified}. The first term on the right-hand side in \eqref{eqn:splitExpDithered} takes the form of (the norm of) a sum of martingale differences. To control this term, we will make use of suitable Burkholder-Rosenthal inequalities. In the Frobenius and max norm cases, it will turn out to be sufficient to use the classical Burkholder-Rosenthal inequalities for real-valued martingale differences. To prove the operator norm bound in Theorem~\ref{thm:OperatorDitheredMask}, however, we will need a new version of the Burkholder-Rosenthal inequalities for matrix martingale differences. We expect this result to be of independent interest.\par 
To formulate this result, recall that a sequence of random matrices $({\XI}_k)_{k=1}^n$ is called a matrix martingale difference sequence with respect to a filtration $(\mathcal{F}_k)_{k=0}^n$ if $\XI_k$ is $\mathcal{F}_k$-measurable, $\E\|\XI_k\| < \infty$, and $\E_{k-1} \XI_k = \0$ for all $1\leq k\leq n$. 
\begin{theorem}
	\label{thm:matrixBRintro} 
	Let $\max\{2,\log n\}\leq q<\infty$ and $\log(p)\geq 2$. There exist $\alpha_{q,p},\beta_{q,p}>0$ depending only on $q$ and $p$ and satisfying $\alpha_{q,p}\lesssim \max\{\sqrt{q},\sqrt{\log(p)}\}$ and $\beta_{q,p}\lesssim q$ if $q\geq \log p$ such that for any matrix martingale difference sequence $({\XI}_k)_{k=1}^n$ in $\R^{p\times p}$ 
	\begin{align}
 \label{eqn:matrixBRintroUpper}
	\Big(\E \Big\|\sum_{k=1}^n {\XI}_k\Big\|^q \Big)^{1/q} & \leq \beta_{q,p}\alpha_{q,p} \max\Big\{\Big(\E\Big\|\Big(\sum_{k=1}^n \mathbb{E}_{k-1}(\XI_k^T\XI_k)\Big)^{1/2}\Big\|^q\Big)^{1/q}, \Big(\E\Big\|\Big(\sum_{k=1}^n \mathbb{E}_{k-1}(\XI_k\XI_k^T)\Big)^{1/2}\Big\|^q\Big)^{1/q},\nonumber\\
	& \qquad \qquad\qquad\qquad\qquad\qquad \qquad\ \ \ \ \ \alpha_{q,p} \max_{1\leq k\leq n} (\E\|{\XI}_k\|^q)^{1/q}\Big\}.
	\end{align}
	Conversely, for any $2\leq q<\infty$, there is a $\gamma_{q,p}>0$ depending only on $q$ and $p$ satisfying $\gamma_{q,p}\lesssim q$ if $q\geq \log(p)$ such that for any matrix martingale difference sequence $({\XI}_k)_{k=1}^n$ in $\R^{p\times p}$ 
		\begin{align}
   \label{eqn:matrixBRintroLower}
	\gamma_{q,p}\Big(\E \Big\|\sum_{k=1}^n {\XI}_k\Big\|^q \Big)^{1/q} & \gtrsim  \max\Big\{\Big(\E\Big\|\Big(\sum_{k=1}^n \mathbb{E}_{k-1}(\XI_k^T\XI_k)\Big)^{1/2}\Big\|^q\Big)^{1/q}, \Big(\E\Big\|\Big(\sum_{k=1}^n \mathbb{E}_{k-1}(\XI_k\XI_k^T)\Big)^{1/2}\Big\|^q\Big)^{1/q},\nonumber\\
	& \qquad \qquad\qquad\qquad\qquad\qquad \qquad\ \ \ \ \ \max_{1\leq k\leq n} (\E\|{\XI}_k\|^q)^{1/q}\Big\}.
	\end{align}
\end{theorem}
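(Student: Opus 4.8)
The plan is to deduce the matrix Burkholder–Rosenthal inequalities from the corresponding scalar (real-valued) Burkholder–Rosenthal inequalities by passing through Schatten norms and using a noncommutative square-function argument. First I would reduce everything to Schatten-$q$ norms: since $\|\Z\| \le \|\Z\|_{S^q} \le p^{1/q}\|\Z\|$ for any $\Z\in\R^{p\times p}$, and we are in the regime $q \ge \log p$, the factor $p^{1/q} \le e$ is an absolute constant. Hence it suffices to prove the two inequalities with the operator norm replaced by $\|\cdot\|_{S^q}$ throughout; this is convenient because $S^q$ is a noncommutative $L^q$-space, for which martingale inequalities are available. The quantities $\bigl(\sum_k \E_{k-1}(\XI_k^T\XI_k)\bigr)^{1/2}$ and $\bigl(\sum_k \E_{k-1}(\XI_k\XI_k^T)\bigr)^{1/2}$ are exactly the ``column'' and ``row'' conditioned square functions, and the max over $k$ of $\E\|\XI_k\|^q$ is (up to the $p^{1/q}$ factor) the ``diagonal'' term.

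The upper bound \eqref{eqn:matrixBRintroUpper}: I would invoke the noncommutative Burkholder–Rosenthal inequality of Junge–Xu (the martingale version in noncommutative $L^q$), which states that $\|\sum_k \XI_k\|_{S^q}$ is comparable, with constant $O(q)$ for $q\ge 2$, to the maximum of the conditioned row and column square-function norms and the ``$\ell^q(S^q)$-diagonal'' term $\bigl(\sum_k \E\|\XI_k\|_{S^q}^q\bigr)^{1/q}$. The only nonstandard point is replacing this diagonal term by $\alpha_{q,p}\max_k(\E\|\XI_k\|^q)^{1/q}$: one writes $\bigl(\sum_k \E\|\XI_k\|_{S^q}^q\bigr)^{1/q} \le n^{1/q}\max_k(\E\|\XI_k\|_{S^q}^q)^{1/q}$, and since $q \ge \log n$ one has $n^{1/q}\le e$; combined with $\|\XI_k\|_{S^q}\le p^{1/q}\|\XI_k\|$ and $p^{1/q}\le e$ (valid once $q\ge\log p$; in the complementary range $q < \log p$ one pays the stated $\sqrt{\log p}$), this produces exactly the claimed $\alpha_{q,p} \lesssim \max\{\sqrt q,\sqrt{\log p}\}$ and $\beta_{q,p}\lesssim q$. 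I would want to be slightly careful to track how the two factors $\beta_{q,p}$ and $\alpha_{q,p}$ split: $\beta_{q,p}$ carries the $O(q)$ Rosenthal constant, while the two appearances of $\alpha_{q,p}$ absorb, respectively, the $S^q\to$ operator-norm conversion on the square-function terms and on the diagonal term. The lower bound \eqref{eqn:matrixBRintroLower} is the ``easy'' direction and follows similarly from the lower noncommutative Burkholder–Rosenthal inequality (which holds with constant $O(q)$, in fact $O(\sqrt q)$ in many formulations), again converting Schatten to operator norm at the cost of absorbable constants; the $\max_k(\E\|\XI_k\|^q)^{1/q}$ term on the right is dominated by $\max_k(\E\|\XI_k\|_{S^q}^q)^{1/q} \le \bigl(\sum_k\E\|\XI_k\|_{S^q}^q\bigr)^{1/q}$, which is one of the terms in the scalar-type lower bound.

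A subtle point worth isolating: the conditioned square functions in the noncommutative Burkholder inequality are typically stated as $\|(\sum_k \E_{k-1}\XI_k^*\XI_k)^{1/2}\|_{S^q}$, which matches \eqref{eqn:matrixBRintroUpper} after the Schatten-to-operator conversion $\|\cdot\|\le\|\cdot\|_{S^q}$; conversely for the lower bound one uses $\|(\sum_k\E_{k-1}\XI_k^*\XI_k)^{1/2}\|_{S^q}\ge\|(\sum_k\E_{k-1}\XI_k^*\XI_k)^{1/2}\|$, so the square-function terms appear consistently on the correct sides of the two inequalities. I expect the main obstacle to be \emph{bookkeeping rather than mathematics}: making sure that (i) all the constants genuinely depend only on $q$ and $p$ and have the asserted growth $\beta_{q,p},\gamma_{q,p}\lesssim q$ and $\alpha_{q,p}\lesssim\sqrt q$ for $q\ge\log p$; (ii) the hypothesis $q\ge\max\{2,\log n\}$ is used exactly where $n^{1/q}$ must be absorbed; and (iii) the real-valued vs.\ matrix distinction in the ``diagonal'' term is handled correctly — in the scalar Rosenthal inequality the diagonal term is $\ell^q$ of the individual $L^q$-norms, and the claim is that after the norm conversions this can be upgraded to a clean $\max_k$ with an extra $\alpha_{q,p}$, which is where the $q\ge\log n$ assumption is essential. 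If one prefers a self-contained route avoiding the full Junge–Xu machinery, an alternative is to prove the Schatten-$q$ bound directly by the classical duality/interpolation argument: expand $\E\|\sum_k\XI_k\|_{S^q}^q$ via the trace, use the martingale property to split into a predictable part (giving the square functions) and a ``diagonal'' part, and bound each by Hölder in the noncommutative $L^q$–$L^{q'}$ pairing; but this reproduces the known proof of noncommutative Burkholder–Rosenthal, so citing it is cleaner.
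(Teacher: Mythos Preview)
Your approach is essentially correct but genuinely different from the paper's. The paper does \emph{not} invoke the Junge--Xu noncommutative Burkholder--Rosenthal inequality. Instead, it takes a decoupled tangent sequence $(\THETA_k)$ for $(\XI_k)$ (in the sense of Kwapie\'n--Woyczy\'nski), applies the $L^q$ matrix Bernstein inequality for \emph{independent} mean-zero matrices (Theorem~\ref{thm:sumsRMs}, with constant $C_{q,p}^{(\mathrm{MR})}\lesssim\sqrt{\max\{q,\log p\}}$) conditionally on the $\sigma$-algebra $\mathcal{G}$ furnished by condition~(CI), and then pays the $q$-decoupling constant $C_{q,p}^{(\mathrm{DEC})}$ of $(\R^{p\times p},\|\cdot\|)$ to pass back to $(\XI_k)$. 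The identifications are $\alpha_{q,p}=C_{q,p}^{(\mathrm{MR})}$ and $\beta_{q,p}=C_{q,p}^{(\mathrm{DEC})}$; the bound $\beta_{q,p}\lesssim q$ for $q\ge\log p$ is obtained via $\|\cdot\|\simeq\|\cdot\|_{S^{\log p}}$ and the UMD constant $C^{(\mathrm{UMD})}_{r,S^r}\lesssim r$. The lower bound proceeds symmetrically via the $q$-recoupling constant. What the paper's route buys is that matrix Bernstein works directly in the operator norm for any $q\ge 2$, so the mixed norm $L^q(S^\infty)$ is handled natively without converting through $S^q$; in fact the paper remarks that the same argument yields estimates for any $L^q(S^r)$. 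Your route is shorter if one is willing to cite Junge--Xu (and the $O(q)$ constant, which does require a specific reference), and in the regime $q\ge\log p$ it actually yields a slightly better overall constant since you can take $\alpha_{q,p}=O(1)$.

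One correction to your bookkeeping: in the range $\log n\le q<\log p$ you write that ``one pays the stated $\sqrt{\log p}$'' via $\alpha_{q,p}$. That is not right --- the Schatten-to-operator conversion factor is $p^{1/q}$, which for small $q$ is as large as $\sqrt{p}$, not $\sqrt{\log p}$. The fix is to observe that the theorem only constrains $\beta_{q,p}$ when $q\ge\log p$; hence you should absorb the $p^{1/q}$ factor (together with the Junge--Xu constant $C_q$) into $\beta_{q,p}$ and keep $\alpha_{q,p}=O(1)\lesssim\max\{\sqrt{q},\sqrt{\log p}\}$ throughout. With that reallocation your argument goes through.
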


The proof of Theorem \ref{thm:matrixBRintro} is presented in Section \ref{sec:Burkholder}.

\subsection{Entry-wise dithering scales}
\label{sec:RankAwareBounds}

\blue{
After the first version of this work appeared, \cite{chen2023parameter} made the interesting observation that improved operator norm estimation error bounds can be obtained by using a different dithering scale for each entry of the samples. Although this was developed for a different quantizer (a two-bit quantizer with triangular dithering), an analogous approach can be pursued in our setting. For simplicity, we consider a setting without an additional mask $\M$.}\par 
\blue{The idea is to consider quantized samples of the form
\begin{align*} 
	\sign(\X_k + \LAMBDA \Tau_k) \quad \text{ and } \quad \sign(\X_k + \LAMBDA \bar{\Tau}_k),
\end{align*}
for $1 \le k \le n$, where $\Tau_k,\bar{\Tau}_k$ are independent and uniformly distributed in $[-1,1]^p$ and $\LAMBDA\in \R^{p\times p}$ is a diagonal matrix. Define the estimator
\begin{align*} 
	(\SIGMA_{\LAMBDA})_n = \frac{1}{2}(\SIGMA_{\LAMBDA}')_n + \frac{1}{2}((\SIGMA_{\LAMBDA}')_n)^T,
\end{align*}
where
\begin{align*} 
	(\SIGMA_{\LAMBDA}')_n = \frac{1}{n} \sum_{k=1}^n  \big( \LAMBDA \sign(\X_k + \LAMBDA \Tau_k) \big) \big( \LAMBDA \sign(\X_k + \LAMBDA \bar{\Tau}_k) \big)^T.
\end{align*}
\sjoerdredred{Before discussing the approach of \cite{chen2023parameter}, let us first observe that if
\begin{equation}
\label{eqn:LambdaOracle}
\LAMBDA_{ii} \simeq_{K} \sqrt{\log(n) \SIGMA_{ii}},
\end{equation}
then the proof of Theorem~\ref{thm:OperatorDitheredMask_Old} can be readily generalized to show that, with high probability, 
\begin{equation}
\label{eqn:sigmaLambdaEst}
\|(\SIGMA_{\LAMBDA})_n - \SIGMA\|\lesssim_K \text{polylog}(p)\| \SIGMA \| \left( \sqrt{\frac{\operatorname{r}(\SIGMA)}{n}} +\frac{\operatorname{r}(\SIGMA)}{n} \right),
\end{equation}
where $\operatorname{r}(\SIGMA)=\tr(\SIGMA)/\|\SIGMA\|$ is the \emph{effective rank} of $\SIGMA$. This bound is qualitatively better than the ones in Theorem~\ref{thm:OperatorDitheredMask_Old}, as clearly $\tr(\SIGMA)\leq p\|\SIGMA\|_{\infty}$. In fact, this error estimate is the same as the (sharp) error bound for the sample covariance matrix of the samples \emph{before quantization} \cite{koltchinskii2017}. On the other hand, compared to Theorem~\ref{thm:OperatorDitheredMask_Old} one needs prior knowledge of all diagonal entries of $\SIGMA$, rather than just (an upper bound on) the largest variance.}\par
\sjoerdredred{Rather than the oracle-type variant \eqref{eqn:LambdaOracle}, \cite{chen2023parameter} directly proposed to use the data-driven choice $\LAMBDA=\LAMBDA_{\max}$, where
$$(\LAMBDA_{\max})_{ii} = \max_{1\leq k\leq n}|(\X_k)_i|.$$
}}\sjoerdredred{As is already indicated in \cite{chen2023parameter}, similarly to their analysis (developed for a different two-bit quantizer with triangular dither), one can show that with this choice \eqref{eqn:sigmaLambdaEst} again holds with high probability. Although this approach leads to a superior operator norm estimate compared to Theorem~\ref{thm:OperatorDitheredMask}, note that it relies on much more side information. Indeed, the dithering levels in \eqref{eq:TwoBitSamplesAdaptive} are \emph{predictable quantities}, meaning that the dithers for $\X_k$ are based on quantities depending only on $\X_1,\ldots,\X_{k-1}$. This enables quantization in the typical signal processing scenario where samples are collected sequentially and need to be quantized as they arrive at a sensor. In contrast, $\Lambda_{\max}$ depends on all collected samples, hence \emph{all samples $\X_1,\ldots,\X_n$ must be observed and stored before they can be quantized}. This prohibits the aforementioned sequential sensing scenario. Moreover, even if one is able to obtain all samples before quantization, then it seems that one may as well construct the sample covariance matrix of these unquantized samples as an estimator (which satisfies the same estimation error bound).\par 
In the remainder of this section, we will show that the two approaches can be reconciled: we will show that improved error estimates can still be obtained by using \emph{predictable} entry-wise dithering levels.}
\blue{Specifically, consider now quantized samples of the form
\begin{align} \label{eq:TwoBitSamplesAdaptiveEntrywise}
	\sign(\X_k + \LAMBDA_k \Tau_k) \quad \text{ and } \quad \sign(\X_k + \LAMBDA_k \bar{\Tau}_k),
\end{align}
for $1 \le k \le n$, where $\Tau_k,\bar{\Tau}_k$ are \newS{again} independent and uniformly distributed in $[-1,1]^p$ and $\LAMBDA_k$ is defined by its diagonal $\diag(\LAMBDA_k) \in \R_{>0}^p$. We set $\diag(\LAMBDA_k) = c \log(k) \cdot \LAmbda_k$, for $c > 0$ an absolute constant (to be specified later) and $\LAmbda_k$ defined by 
\begin{align} \label{eq:lambdaAdaptiveEntrywise}
	\LAmbda_{k}
	= \left( \frac{1}{k} \sum_{j=0}^{k-1} \X_j^{\odot 2} \right)^{\odot 1/2}
	= \left( \frac{1}{k} \left( (k-1) \LAmbda_{k-1}^{\odot 2} + \X_{k-1}^{\odot 2} \right) \right)^{\odot 1/2}, \qquad 1\leq k\leq n,
\end{align}
where $\X_j^{\odot \ell} = \X_j \odot \cdots \odot \X_j$ denotes the entry-wise power of a vector.
Now consider the estimator
\begin{align} \label{eq:TwoBitEstimatorAdaptiveEntrywise}
	\newS{\hat{\SIGMA}^{\operatorname{adap}}_n} = \frac{1}{2}\hat{\SIGMA}'_n + \frac{1}{2}(\hat{\SIGMA}'_n)^T,
\end{align}
where
\begin{align} \label{eq:AsymmetricEstimatorAdaptiveEntrywise}
	\newS{\hat{\SIGMA}'_n} = \frac{1}{n} \sum_{k=1}^n  \big( \LAMBDA_k \sign(\X_k + \LAMBDA_k\Tau_k) \big) \big( \LAMBDA_k \sign(\X_k + \LAMBDA_k \bar{\Tau}_k) \big)^T.
\end{align}
By slightly adapting the proof of Theorem \ref{thm:OperatorDitheredMask}, we can derive the following effective rank-aware bounds for our data-adaptive estimator. \sjoerdredred{We provide the full proof in Appendix~\ref{sec:ProofAppendix}.}
%
\begin{theorem} \label{thm:OperatorDitheredMaskDecoupled}
	For any $K > 0$, there exist constants $C_1,C_2>0$ depending only on $K$ such that the following holds. Let $\X \in \R^p$ be a mean-zero, $K$-subgaussian vector with covariance matrix \sjoerd{$\SIGMA \in \R^{p\times p}$}.
	Let $\X_0,...,\X_n \overset{\mathrm{i.i.d.}}{\sim} \X$ and let $\M \in [0,1]^{p\times p}$ be a fixed symmetric mask. If $\LAMBDA_k \in \R^{p\times p}$ are chosen to be diagonal with $\diag(\LAMBDA_k) = C_1 \log(k)^{1/2} \LAmbda_k$, where $\LAmbda_k$ is defined in \eqref{eq:lambdaAdaptiveEntrywise}, then for any $\theta \ge C_2\max\{\log(p),\log(n)\}$ we have with probability at least $1 - 4e^{-\theta}$ $$\pnorm{\newS{\hat{\SIGMA}^{\operatorname{adap}}_n} -  \SIGMA}{}\lesssim_{K} 
	\log(p)\log(n) \| \SIGMA \| \left(\theta^{5/2} \sqrt{\frac{\newS{\operatorname{r}(\SIGMA)}}{n}} +\theta^{7/2} \frac{\newS{\operatorname{r}(\SIGMA)}}{n} \right).
	$$  
\end{theorem}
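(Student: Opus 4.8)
The plan is to mimic the proof of Theorem~\ref{thm:OperatorDitheredMask} step by step, tracking how the entry-wise scaling $\LAMBDA_k$ (rather than the scalar $\tilde\lambda_k$) propagates through each estimate, and exploiting that the diagonal scaling matches the local variances rather than the global maximal variance. Writing $\Y_k = \sign(\X_k + \LAMBDA_k\Tau_k)$ and $\bar\Y_k = \sign(\X_k + \LAMBDA_k\bar\Y_k)$, and using the filtration $\cF_k = \sigma(\X_0,\ldots,\X_k,\Tau_1,\ldots,\Tau_k,\bar\Tau_1,\ldots,\bar\Tau_k)$ with respect to which $\LAMBDA_k$ is $\cF_{k-1}$-measurable (predictability is the whole point of definition~\eqref{eq:lambdaAdaptiveEntrywise}), I would first make the same split as in \eqref{eqn:splitExpDithered}:
\begin{align*}
\|\hat\SIGMA'_n - \SIGMA\| \;\le\; \Big\| \sum_{k=1}^n \tfrac1n\big( (\LAMBDA_k\Y_k)(\LAMBDA_k\bar\Y_k)^T - \E_{k-1}[(\LAMBDA_k\Y_k)(\LAMBDA_k\bar\Y_k)^T]\big)\Big\| + \Big\| \sum_{k=1}^n \tfrac1n\E_{k-1}[(\LAMBDA_k\Y_k)(\LAMBDA_k\bar\Y_k)^T] - \SIGMA\Big\|.
\end{align*}
The second (``bias'') term is handled by the same mechanism as Lemma~\ref{lem:biasControlUnified}: conditionally on $\cF_{k-1}$ the dithers $\Tau_k,\bar\Tau_k$ are independent uniform on $[-1,1]^p$ with the diagonal scaling $\LAMBDA_k$, so by the standard dithering identity $\E[\LAMBDA_k\sign(\X + \LAMBDA_k\Tau)\mid \X] = \X$ entrywise whenever $(\LAMBDA_k)_{ii} \ge |(\X)_i|$; on the (high-probability) event that $(\LAMBDA_k)_{ii} = C_1\log(k)^{1/2}(\LAmbda_k)_i \gtrsim_K \sqrt{\log(n)\,\SIGMA_{ii}} \ge \max_k|(\X_k)_i|$ for all $k$, one gets $\E_{k-1}[(\LAMBDA_k\Y_k)(\LAMBDA_k\bar\Y_k)^T] = \E_{k-1}[\X_k\X_k^T] = \SIGMA$ (using independence of the two dither copies), so the bias term vanishes up to the failure probability of this event, which is controlled by subgaussian concentration of $(\LAmbda_k)_i^2 = \tfrac1k\sum_{j<k}(\X_j)_i^2$ around $\SIGMA_{ii}$. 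The event control costs one of the $e^{-\theta}$ terms and the $\log(n)$, $\log(p)$ factors.

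For the martingale term, set $\XI_k = \tfrac1n\big((\LAMBDA_k\Y_k)(\LAMBDA_k\bar\Y_k)^T - \E_{k-1}[(\LAMBDA_k\Y_k)(\LAMBDA_k\bar\Y_k)^T]\big)$, a matrix martingale difference sequence, and apply the matrix Burkholder--Rosenthal inequality~\eqref{eqn:matrixBRintroUpper} at exponent $q \simeq \theta \gtrsim \max\{\log n,\log p\}$, then convert the $L^q$-bound to a tail bound. The three quantities to estimate are (i) $\big\|\big(\sum_k \E_{k-1}(\XI_k^T\XI_k)\big)^{1/2}\big\|$ and its transpose analogue, and (ii) $\max_k(\E\|\XI_k\|^q)^{1/q}$. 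Here is where the entry-wise scaling pays off: on the good event, $\LAMBDA_k\Y_k$ has entries bounded by $(\LAMBDA_k)_{ii} \lesssim_K \sqrt{\log n}\,(\SIGMA_{ii})^{1/2} + (\text{fluctuation})$, so $\|\LAMBDA_k\bar\Y_k\|_2^2 \lesssim_K \log n\sum_i \SIGMA_{ii} = \log(n)\tr(\SIGMA)$, giving $\XI_k^T\XI_k \preceq \tfrac1{n^2}\|\LAMBDA_k\bar\Y_k\|_2^2\,(\LAMBDA_k\Y_k)(\LAMBDA_k\Y_k)^T + (\text{conditional-mean terms})$; summing over $k$ and using Kadison--Schwarz~\eqref{eq:Kadison} together with $\sum_k\E_{k-1}[(\LAMBDA_k\Y_k)(\LAMBDA_k\Y_k)^T] \approx n\SIGMA$ yields $\big\|\sum_k\E_{k-1}(\XI_k^T\XI_k)\big\| \lesssim_K \tfrac{\log n}{n}\tr(\SIGMA)\|\SIGMA\| = \tfrac{\log n}{n}\operatorname{r}(\SIGMA)\|\SIGMA\|^2$, so the square root contributes $\sqrt{\operatorname{r}(\SIGMA)/n}\,\|\SIGMA\|$ times polylog; and $\max_k\|\XI_k\| \lesssim_K \tfrac1n\log(n)\tr(\SIGMA)$ in operator norm gives the $\operatorname{r}(\SIGMA)/n\,\|\SIGMA\|$ term after multiplying by $\alpha_{q,p}^2 \simeq q \simeq \theta$. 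Carefully combining with the $\beta_{q,p}\alpha_{q,p} \lesssim q^{3/2} \lesssim \theta^{3/2}$ prefactor from~\eqref{eqn:matrixBRintroUpper}, plus the extra $\theta$ from each of the two square-root quantities and the extra $\alpha_{q,p}$ in front of $\max_k\|\XI_k\|$, produces the $\theta^{5/2}\sqrt{\operatorname{r}(\SIGMA)/n} + \theta^{7/2}\operatorname{r}(\SIGMA)/n$ shape.

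The main obstacle is item (i): estimating $\big\|\sum_k \E_{k-1}(\XI_k^T\XI_k)\big\|$ cleanly when both the quantized vectors \emph{and} the scaling $\LAMBDA_k$ are random and coupled to the past. One must bound $\E_{k-1}[(\LAMBDA_k\Y_k\bar\Y_k^T\LAMBDA_k)(\LAMBDA_k\bar\Y_k\Y_k^T\LAMBDA_k)] = \E_{k-1}[\|\LAMBDA_k\bar\Y_k\|_2^2\,\LAMBDA_k\Y_k\Y_k^T\LAMBDA_k]$, and because $\LAMBDA_k$ is $\cF_{k-1}$-measurable it can be pulled out, leaving $\|\LAMBDA_k\|^2$-type factors that must be controlled uniformly in $k$ on the good event — this is exactly where one needs the fluctuation bound $(\LAmbda_k)_i^2 \le \SIGMA_{ii} + O_K(\sqrt{\SIGMA_{ii}^2\theta/k})$ to hold simultaneously for all $1\le k\le n$ and all $i$, via a union bound over $i \in [p]$ and a maximal-inequality/union bound over $k$, absorbing the price into the $\log(p)\log(n)$ prefactor and the $\theta \gtrsim \max\{\log p,\log n\}$ assumption. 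Once the good event is set up, the remaining manipulations — peeling off the conditional-mean corrections using~\eqref{eq:Kadison}, and the conversion from moments to tails — are routine and parallel the proof of Theorem~\ref{thm:OperatorDitheredMask}.
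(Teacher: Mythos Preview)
Your overall strategy --- the bias/martingale split, application of Theorem~\ref{thm:matrixBRintro} to the martingale piece, and the key observation that $\|\LAMBDA_k\Y_k\|_2^2 = \tr(\LAMBDA_k^2) \lesssim_K \log(n)\tr(\SIGMA)$ rather than $p\log(n)\|\SIGMA\|_\infty$ --- is exactly the paper's approach. However, your treatment of the bias term contains a genuine error.

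You claim that on a high-probability event one obtains $\E_{k-1}[(\LAMBDA_k\Y_k)(\LAMBDA_k\bar\Y_k)^T] = \SIGMA$ exactly. This is false. The conditional expectation $\E_{k-1}[\cdot]$ integrates over $\X_k$, which is independent of $\cF_{k-1}$, and the pointwise dithering identity $\E[\lambda\sign(X+\lambda\tau)\mid X]=X$ only holds on the $\X_k$-dependent set $\{|X|\le\lambda\}$. Since $\X_k$ is unbounded, no $\cF_{k-1}$-measurable choice of $\LAMBDA_k$ makes $\E_{k-1}[(\LAMBDA_k\Y_k)(\LAMBDA_k\bar\Y_k)^T]$ equal to $\SIGMA$; and you cannot condition on $\{|(\X_k)_i|\le(\LAMBDA_k)_{ii}\}$ without altering the distribution of $\X_k$ inside the expectation. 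The paper instead proves an \emph{entry-wise} refinement of Lemma~\ref{lem:linftyBiasEst} (Lemma~\ref{lem:linftyBiasEstDecoupled}): the $(i,j)$-entry of the bias is at most $c_1((\LAMBDA_k)_{ii}(\LAMBDA_k)_{jj}+\Sigma_{ii}^{1/2}\Sigma_{jj}^{1/2})\exp(-c_2\min\{(\LAMBDA_k)_{ii}^2/\Sigma_{ii},(\LAMBDA_k)_{jj}^2/\Sigma_{jj}\})$. These exponentially small contributions are then summed over $k$ via a split at a burn-in index $k_\theta\simeq_K\theta$ (for $k\le k_\theta$ the lower bound $(\LAmbda_k)_i\gtrsim\Sigma_{ii}^{1/2}$ is not yet available, a point you do not address), yielding an operator-norm bias of order $\theta^2\log(p)\log(n)\tr(\SIGMA)/n$.

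A second, smaller slip: in the predictable variance you write $\sum_k\E_{k-1}[(\LAMBDA_k\Y_k)(\LAMBDA_k\Y_k)^T]\approx n\SIGMA$. But here the \emph{same} dither $\Tau_k$ appears in both factors, so (by the second part of Lemma~\ref{lem:linftyBiasEstDecoupled}) the conditional expectation is close to $\SIGMA-\diag(\SIGMA)+\LAMBDA_k^2$, not $\SIGMA$. One then needs $\|\LAMBDA_k^2\|=\max_i(\LAMBDA_k)_{ii}^2\lesssim_K\theta\log(p)\log(k)\|\SIGMA\|$ from the entry-wise concentration (Lemma~\ref{lem:SumBoundsDecoupled}). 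With these two corrections your argument coincides with the paper's.
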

\begin{remark}
	\label{rem:Decoupled}
Using a different dithering level for each entry seems to be more of theoretical than of practical interest since it massively increases acquisition and storage complexity of the samples. In contrast to using a global dithering scale, one has to acquire a full-precision number per single entry and sample vector, and to keep track of the diagonal matrix $\LAMBDA_k$ instead of the single parameter $\lambda_k$. In standard $32$-bit representation this means that one has total storage costs of $32p + 2pn$ bits. On the one hand, this still improves over the total storage costs of $32pn$ bits required in the case of high-precision samples. On the other hand, to update $\LAMBDA_k$ the decoupled dithering procedure acquires one full precision number per entry of each sample. In physical systems, in which sample entries correspond to single measurement devices with distinct analog-to-digital converters, the decoupled dithering procedure has thus the same acquisition complexity as \newS{directly} using full precision samples. 
\end{remark}
}

\section{Numerical simulation}
\label{sec:Numerics}

\blue{
\johannesnew{To illustrate Theorem~\ref{thm:OperatorDitheredMask} numerically\footnote{The code is provided at \url{https://johannes-maly.github.io}.}, we revisit a simulation in \cite[Figure 1(a)]{dirksen2021covariance}. We consider a Gaussian distribution and covariance matrix $\SIGMA$ having ones on its diagonal and all remaining entries equal to $0.2$. We plot the average estimation errors of the two estimators in operator norm, for $n = 200$ and $p$ varying from $5$ to $30$ averaged over $100$ draws of $\X_0,\dots,\X_n$. The dithered estimator $\SIGMADITH_n$ uses $\lambda \in (0,4\| \SIGMA \|_\infty)$ that is optimized in each run via grid-search. The constant $C_1$ from Theorem \ref{thm:OperatorDitheredMask}, which is required for $\SIGMAADAP_n$, is optimized only in the first run for $p=5$ via grid-search (yielding $C_1 = 0.20$) and fixed for the rest of the experiment. As expected, both estimators perform similarly, with $\SIGMADITH_n$ showing slightly better performance. Importantly, however, this slightly better performance only holds if $\lambda$ is (nearly) perfectly tuned. \sjoerdredred{In practice, one typically does not have prior knowledge of $\|\SIGMA\|_{\infty}$ and hence one cannot perfectly tune $\lambda$.} As can be seen in Figure~\ref{fig:VaryingLambda1}, $\SIGMAADAP_n$ significantly outperforms $\SIGMADITH_n$ if $\lambda$ is imperfectly tuned. Finally, let us observe that the constant $C_1$ does not need to be carefully tuned - in Theorem~\ref{thm:OperatorDitheredMask} it depends only on the subgaussian constant $K$ and, in particular, is a universal constant if we consider Gaussian samples. To further verify this numerically, we test the choice of $C_1$ from the first experiment for two different, randomly drawn covariance matrices. As seen in Figures \ref{fig:VaryingLambda2}-\ref{fig:VaryingLambda3}, the performance of $\SIGMAADAP_n$ is steady with this choice.} 
%
\begin{figure}
    \centering
    \includegraphics[width=0.5\textwidth]{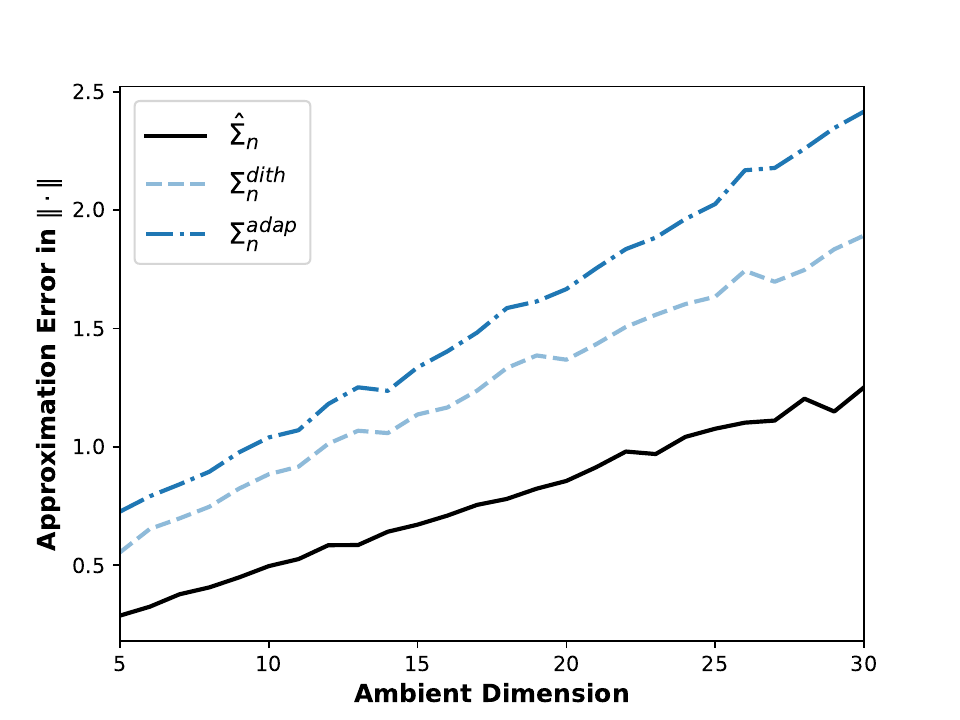}
    \caption{\johannesnew{Comparison of $\SIGMAADAP_n$ and $\SIGMADITH_n$ (with optimized $\lambda$). The performance of the sample covariance matrix $\hat\SIGMA_n = \frac{1}{n} \sum_{k=1}^n \X_k\X_k^T$, which uses unquantized samples, is plotted for reference.} \sjoerdredred{It is remarkable that $\SIGMAADAP_n$ performs almost as well as the oracle-based estimator $\SIGMADITH_n$ with an optimized $\lambda$.}}
    \label{fig:Comparison}
\end{figure}
\begin{figure}
    \centering
     \begin{subfigure}[b]{0.45\textwidth}
         \centering
         \includegraphics[width=\textwidth]{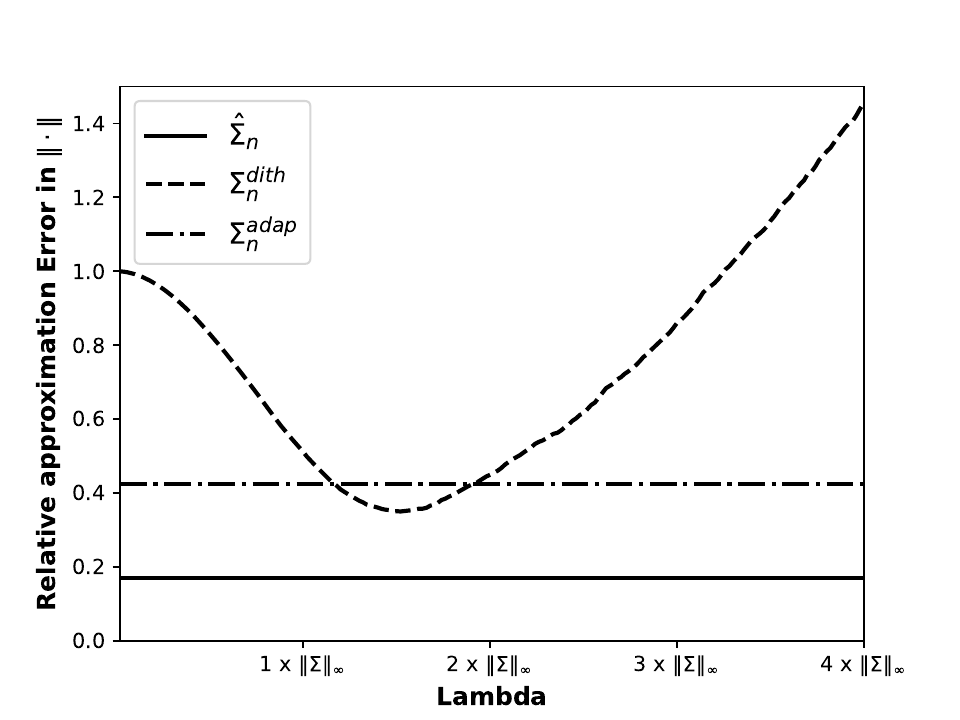}
         \caption{$\SIGMA$ as in Figure \ref{fig:Comparison}.}
         \label{fig:VaryingLambda1}
     \end{subfigure}
     \\ \vspace{0.5cm}
     \begin{subfigure}[b]{0.45\textwidth}
         \centering
         \includegraphics[width=\textwidth]{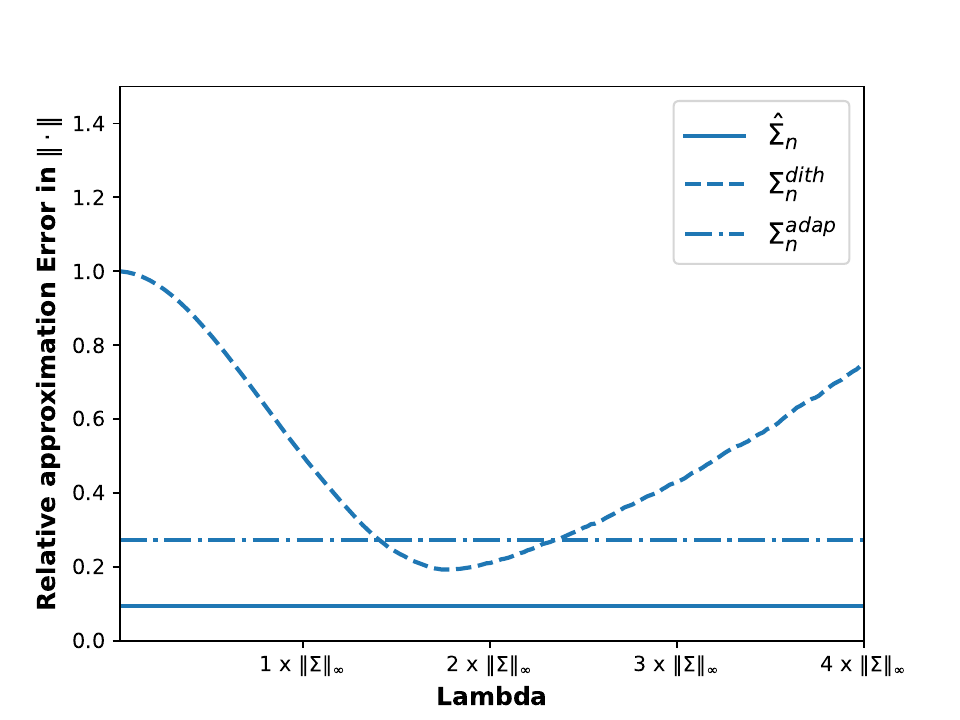}
         \caption{Random $\SIGMA$.}
         \label{fig:VaryingLambda2}
     \end{subfigure}
     \hfill
     \begin{subfigure}[b]{0.45\textwidth}
         \centering
         \includegraphics[width=\textwidth]{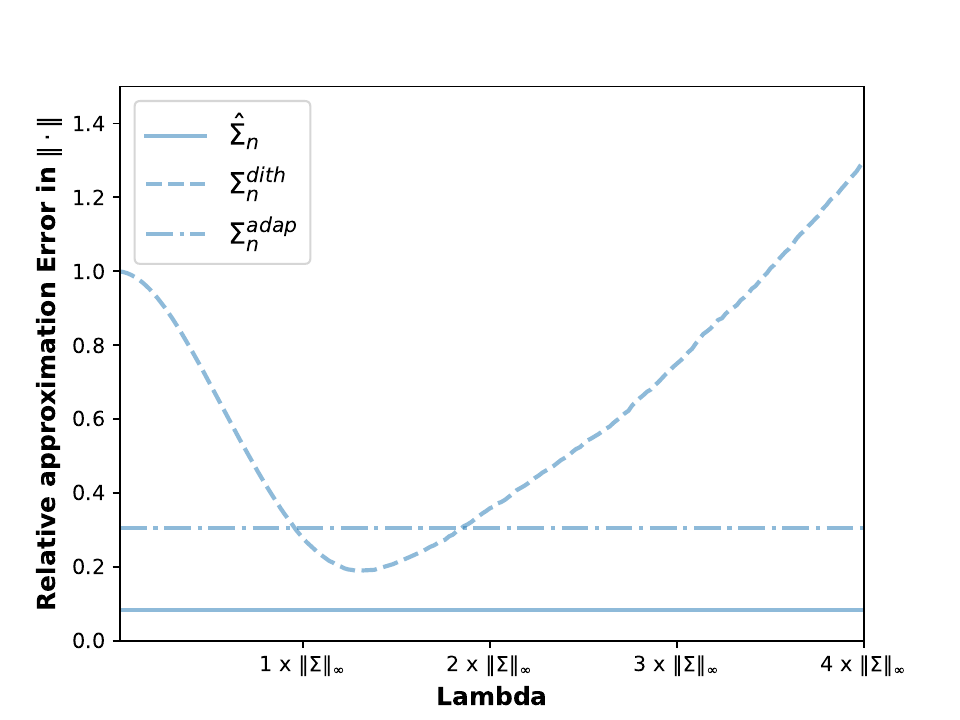}
         \caption{Random $\SIGMA$.}
         \label{fig:VaryingLambda3}
     \end{subfigure}
     \caption{\johannesnew{Performance of the two estimators 
     while varying $\lambda$ in $\SIGMADITH_n$. The constant $C_1$ is chosen as in Figure \ref{fig:Comparison}. Figure \ref{fig:VaryingLambda1} considers the same $\SIGMA$ as in Figure \ref{fig:Comparison} for $p=5$, whereas Figures \ref{fig:VaryingLambda2}-\ref{fig:VaryingLambda3} consider two randomly drawn covariance matrices. In all cases, the performance of $\SIGMAADAP_n$ is close to the optimal performance of $\SIGMADITH_n$, even though $C_1$ is fixed to the same value. Note that in contrast to Figure \ref{fig:Comparison}, the error is measured relatively to $\| \SIGMA \|$ to allow easier comparison. The performance of the sample covariance matrix $\hat\SIGMA_n$ is plotted for reference. }} 
    \label{fig:VaryingLambda}
\end{figure}
}

\blue{
To illustrate Theorem \ref{thm:OperatorDitheredMaskDecoupled} numerically as well, we compare the performance of $\SIGMAADAP_n$ and $\hat{\SIGMA}^{\operatorname{adap}}_n$ in the setting of Figure \ref{fig:Comparison} for two different covariance matrices. The first covariance matrix $\SIGMA_1$ is the one from Figure \ref{fig:Comparison}, i.e., it has ones on the diagonal and all remaining entries equal to $0.2$; clearly, $\tr(\SIGMA_1) = p \| \SIGMA_1 \|_\infty$ so that Theorems \ref{thm:OperatorDitheredMask} and \ref{thm:OperatorDitheredMaskDecoupled} yield similar bounds. This is \sjoerdredred{reflected in} the simulation. The second covariance matrix $\SIGMA_2 = \mathbf D \SIGMA_1 \mathbf D$, where $\mathbf D \in \R^{p\times p}$ is a diagonal matrix with $D_{11} = 1$ and $D_{ii} = 1/10$, for $i > 1$; in this case, all variances but the first are scaled by $\tfrac{1}{100}$ so that $\tr(\SIGMA_2) \ll p \| \SIGMA_2 \|_\infty$ and Theorem \ref{thm:OperatorDitheredMaskDecoupled} yields \sjoerdredred{ a much better error estimate} for $\hat{\SIGMA}^{\operatorname{adap}}_n$ than Theorem \ref{thm:OperatorDitheredMask} for $\SIGMAADAP_n$. Figure \ref{fig:ComparisonDecoupled} illustrates that in the second case the error of $\SIGMAADAP_n$ grows in $p$ whereas the error of $\hat{\SIGMA}^{\operatorname{adap}}_n$ stays constant and aligns with the behavior of the sample covariance matrix $\hat\SIGMA_n$ \sjoerdredred{of the} unquantized samples.
\begin{figure}
    \centering
    \includegraphics[width=0.6\textwidth]{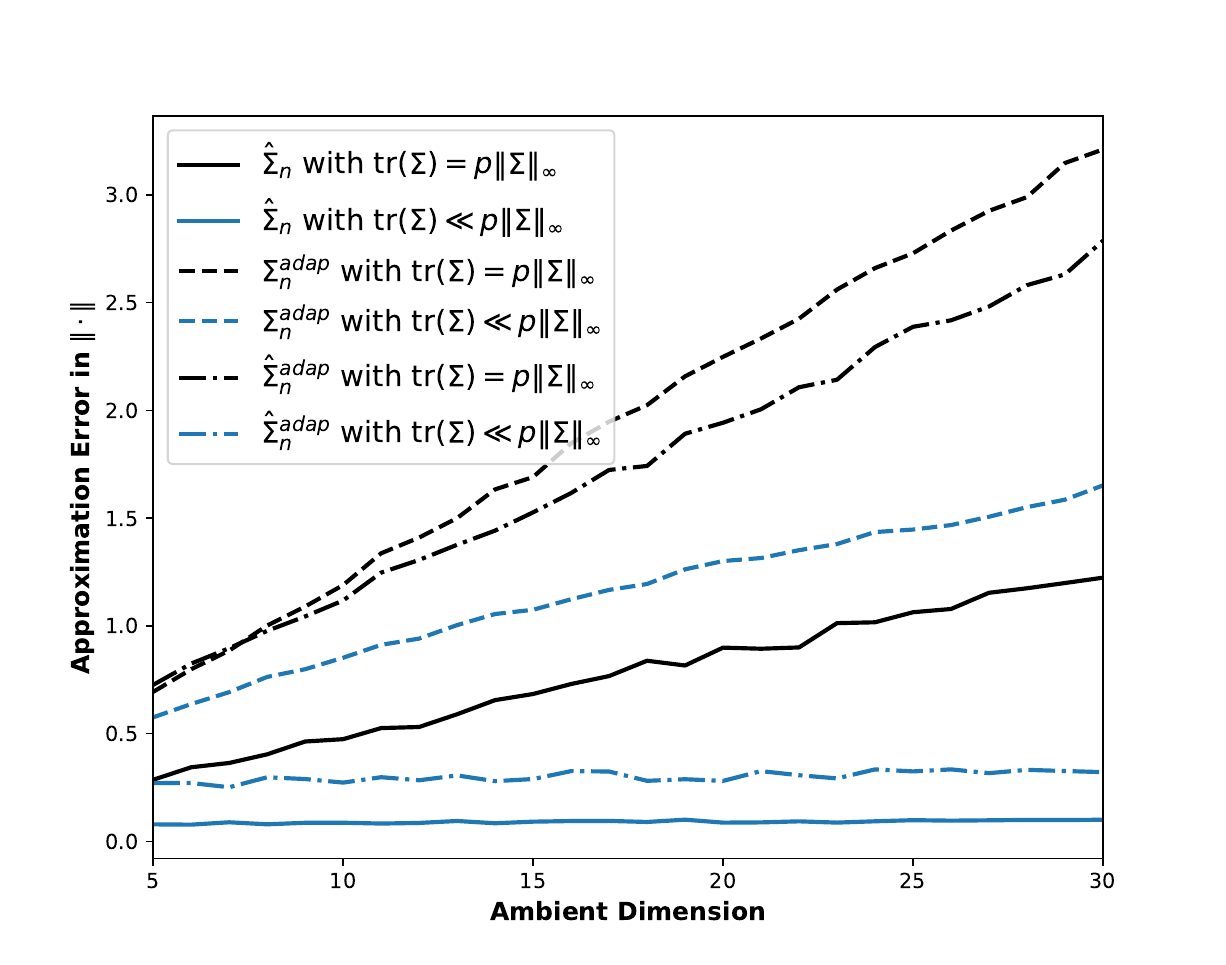}
    \caption{\newJ{Comparison of $\SIGMAADAP_n$ and $\hat{\SIGMA}^{\operatorname{adap}}_n$. The performance of the sample covariance matrix $\hat\SIGMA_n = \frac{1}{n} \sum_{k=1}^n \X_k\X_k^T$, which uses unquantized samples, is plotted for reference.}}
    \label{fig:ComparisonDecoupled}
\end{figure}
}

\section{Related work}
\label{sec:RelatedWork}
\blue{

The observation that dithering, the operation of intentionally adding well-designed `noise' before quantization, is advantageous for signal reconstruction has a long history going back at least to \cite{Rob62} (see also the survey \cite{gray1998quantization}). In the last decade, there have been various works that rigorously quantify this advantage for scenarios involving the memoryless one-bit scalar quantizer $Q(\X) = \sign(\X)$ featured in this work. For instance, there is now a substantial literature on one-bit compressed sensing with dithering. In the context of one-bit compressed sensing with zero thresholds, it is well-known that it is only possible to reconstruct the direction of a signal and, moreover, positive reconstruction results are restricted to Gaussian and Gaussian circulant measurements and reconstruction can easily fail for other measurement schemes (see, e.g., (the references in) the works \cite{ALP14,BFN17,BoB08,DJR20,Fou17,JLB13,PlV13,PlV13lin} and the surveys \cite{BJK15,Dir19}). It was first observed that Gaussian dithering can be leveraged to reconstruct the full signal (see \cite{BFN17,knudson2016one} for Gaussian measurements and \cite{DJR20} for Gaussian circulant measurements). Later, it was shown that uniform dithering enables reconstruction in sensing scenarios beyond Gaussian(-like) matrices, including subgaussian and heavy-tailed measurements (e.g., \cite{DiM18a,JMP19}), subgaussian circulant measurements \cite{DiM23}, and DCT matrices \cite{EYN23b}, using efficient reconstruction methods. Moreover, reconstruction can be made optimally robust against measurement noise prior to quantization as well as adversarial bit corruptions occurring during quantization \cite{DiM18a,DiM23}. The advantage of using dithering in the memoryless one-bit quantizer has been explored for a range of other problems in signal processing, including one-bit matrix completion (e.g., \cite{CaZ13,chen2022high,DPB14,EYS23}), reconstructing a signal in an unlimited sampling framework with one-bit quantization (e.g., \cite{EMY22,EMY23}), and one-bit quadratic sensing problems such as phase retrieval (e.g., \cite{EYN23,EYS22}). Below we will only give a detailed account of the developments in our problem setting, covariance estimation.}    

\subsection{Quantized covariance estimation}

Engineers have been examining the influence of coarse quantization on correlation and covariance estimation for decades, e.g., in the context of signal processing \cite{jacovitti1994estimation,roth2015covariance}, direction of arrival (DOA) estimation \cite{bar2002doa}, and massive multiple-input-multiple-output (MIMO) \cite{choi2016near,li2017channel}. While many related works restrict themselves to the undithered one-bit quantizer $Q(\X) = \sign(\X)$ and use the so-called arcsine law \cite{van1966spectrum} (also known as Grothendieck's identity in the mathematical literature \cite{vershynin2018high}) for estimating the correlation matrix, some recent articles examine the use of quantization thresholds in order to estimate the full covariance matrix.

One line of work focuses on the covariance estimation of stationary \cite{eamaz2021modified,eamaz2023covariance} and non-stationary \cite{eamaz2022covariance} Gaussian processes. In order to estimate the full covariance matrix, the authors propose to add biased Gaussian dithering vectors before quantizing. Due to the Gaussian nature of the dithering thresholds, the authors can derive a modified arcsine law which relates the covariance matrix of the quantized samples $\E(\sign(\X+\Tau)\sign(\X+\Tau)^T)$ to an involved integral expression depending on the entries of $\E(\X\X^T)$. To reconstruct $\E(\X\X^T)$ from $\E(\sign(\X+\Tau)\sign(\X+\Tau)^T)$, different numerical methods are proposed. Note, however, that the modified arcsine law is tailored to Gaussian samples and that none of the three works analyzes the estimation error in terms of the number of samples. In fact, the modified arcsine law only applies to the sample covariance matrix in the infinite sample limit ($n \to \infty$).

Following the observation that variances can be estimated from one-bit samples with fixed, deterministic quantization thresholds \cite{chapeau2008fisher,fang2010adaptive}, another line of work investigates the use of such fixed thresholds \cite{liu2021one} to estimate covariance matrices from one-bit samples. Assuming an underlying stationary Gaussian process, the authors explicitly calculate the mean and autocorrelation of one-bit samples with fixed dither. Based on these representations they design an algorithm for estimating the covariance matrix entries of the underlying process and empirically evaluate its performance. Again, the theoretical results are tailored to Gaussian samples and do not analyze the approximation error in terms of the number of samples. Moreover, the provided theoretical results are restricted to single entries of the covariance matrix. Since this does not take into account the matrix structure of $\E(\X\X^T)$, the developed tools will not lend themselves to a tight analysis of the approximation error in matrix Schatten norms like the operator norm.

The authors of \cite{xiao2023one} follow up on \cite{liu2021one} and use low-order Taylor expansions to analyze the variance of maximum-likelihood estimators of the quantities computed in \cite{liu2021one}. They conclude that a fixed dither yields suboptimal results if the variances of the process are strongly fluctuating. They then propose to use a dither that is monotonically increasing in time and evaluate its performance in numerical simulations. As with the previous works, the applied techniques are tailored to Gaussian sample distributions and the analysis is restricted to the infinite sample limit ($n \to \infty$).

In contrast, the results in \cite{dirksen2021covariance}, which we extend in this manuscript, provided the first non-asymptotic (and near-minimax optimal) guarantees in the literature. The use of uniform random dithering vectors (instead of fixed, deterministically varying, or random Gaussian ones) considerably simplifies the analysis and allows to generalize the results to non-Gaussian samples. In \cite{yang2023plug}, the non-asymptotic results of \cite{dirksen2021covariance} have been generalized to the complex domain and applied to massive MIMO. A recent work \cite{chen2022high} modified the strategy of \cite{dirksen2021covariance} to cover heavy-tailed distributions (by using truncation before quantizing). \blue{The work  \cite{chen2023parameter}, which appeared after the first version of our work, introduced a new covariance estimator based on samples obtained using a two-bit quantizer with triangular (rather than uniform) dithering. This paper derived operator norm error estimates for this estimator, both for a single dithering level proportional to the maximal variance (similar to Theorem~\ref{thm:OperatorDitheredMask_Old}) and data dependent dithering levels (for the latter, see the detailed discussion in section~\ref{sec:RankAwareBounds}).} 

\blue{
\subsection{Matrix martingale inequalities}

The classical Burkholder-Rosenthal inequalities for real-valued martingales (see Theorem~\ref{thm:BRreal}) are due to Burkholder \cite{Bur73}, following the work of Rosenthal~\cite{Ros70}. These inequalities were later extended to noncommutative martingales by Junge and Xu \cite{JuX03}. For matrix martingales, their result implies that for $2\leq q<\infty$
 \begin{align}
 \label{eqn:JX}
\Big(\E \Big\|\sum_{k=1}^n {\XI}_k\Big\|_{S^q}^q \Big)^{1/q} & \leq C_q \max\Big\{\Big(\E\Big\|\Big(\sum_{k=1}^n \mathbb{E}_{k-1}(\XI_k^T\XI_k)\Big)^{1/2}\Big\|_{S^q}^q\Big)^{1/q}, \Big(\E\Big\|\Big(\sum_{k=1}^n \mathbb{E}_{k-1}(\XI_k\XI_k^T)\Big)^{1/2}\Big\|_{S^q}^q\Big)^{1/q},\nonumber\\
	& \qquad \qquad\qquad\qquad\qquad\qquad \qquad \Big(\sum_{k=1}^n \E\|{\XI}_k\|_{S^q}^q\Big)^{1/q}\Big\},
\end{align}
where $\|\cdot\|_{S^q}$ denotes the Schatten $q$-norm and $C_q$ is a constant depending only on $q$. Note that in \eqref{eqn:JX}, the $L^q$-norm over the probability space matches the parameter of the Schatten norm. In contrast, Theorem~\ref{thm:matrixBRintro} states a different generalization of the Burkholder-Rosenthal inequalities for $(\E\|\sum_{k=1}^n {\XI}_k\|_{S^{\infty}}^q)^{1/q}$. Our proof can be readily adapted to yield an estimate for any `mixed norm' $(\E\|\sum_{k=1}^n {\XI}_k\|_{S^{r}}^q)^{1/q}$, with $r>1$, of the matrix martingale.\par
Let us note that two other well-known existing inequalities for matrix martingales, the matrix versions of the Azuma-Hoeffding and Burkholder-Gundy inequalities, are insufficient for our purposes. First, it is important that Theorem~\ref{thm:matrixBRintro} allows to estimate \emph{unbounded} matrix martingales, whereas the matrix version of the Azuma-Hoeffding inequality \cite{tropp2012user} requires uniformly bounded martingale differences. Second, in our proof it is essential that the right hand side of \eqref{eqn:matrixBRintroUpper} features $L^q$-norms of \emph{predictable}, i.e., $\mathcal{F}_{k-1}$-measurable quantities. In contrast, the well-known noncommutative version of the Burkholder-Gundy inequalities \cite{PiX97} implies estimates for matrix martingales with a non-predictable right hand side:
\begin{align}
 \label{eqn:PX}
\Big(\E \Big\|\sum_{k=1}^n {\XI}_k\Big\|_{S^q}^q \Big)^{1/q} & \leq C_q \max\Big\{\Big(\E\Big\|\Big(\sum_{k=1}^n \XI_k^T\XI_k\Big)^{1/2}\Big\|_{S^q}^q\Big)^{1/q}, \Big(\E\Big\|\Big(\sum_{k=1}^n \XI_k\XI_k^T\Big)^{1/2}\Big\|_{S^q}^q\Big)^{1/q},
\end{align}
where $C_q$ is a constant depending only on $q$.}

\section{Proofs}
\label{sec:Proofs}

As outlined above, we first analyze the bias term of \eqref{eqn:splitExpDithered} in Section \ref{sec:biasControl}. This will result in Lemma \ref{lem:biasControlUnified} below. Sections \ref{sec:ProofOfFrobeniusDitheredMask} and \ref{sec:ProofOfOperatorDitheredMask} then present the proofs of Theorems \ref{thm:FrobeniusDitheredMask} and \ref{thm:OperatorDitheredMask}. Finally, Section \ref{sec:Burkholder} provides the proof of Theorem \ref{thm:matrixBRintro}. 

\subsection{Control of the bias}
\label{sec:biasControl}

Let us begin with some basic observations. First note that since $\lambda_k$ in \eqref{eq:lambdaAdaptive} --- and with it $\tilde\lambda_k = c \log(k) \lambda_k$ --- is independent of $\X_k$, we can apply the following result from \cite{dirksen2021covariance} conditionally to estimate the bias term in \eqref{eqn:splitExpDithered}.

\begin{lemma}[{\cite[Lemma 17]{dirksen2021covariance}}]
	\label{lem:linftyBiasEst}
	For any $K > 0$, there exist constants $c_1,c_2>0$ depending only on $K$ such that the following holds. Let $\X \in \R^p$ be a mean-zero, $K$-subgaussian vector with covariance matrix $\johannes{\E\round{ \X\X^T }}{} = \SIGMA \in \R^{p\times p}$. Let \sjoerd{$\lambda>0$ and let} $\Y=\sign(\X + \Tau)$ and $\bar{\Y}=\sign(\X + \bar{\Tau})$, where $\Tau,\bar{\Tau}$ are independent and uniformly distributed in $[-\lambda,\lambda]^p$ \sjoerd{and independent of $\X$}. Then,
	$$\pnorm{\johannes{\E\round{\lambda^2\Y\bar{\Y}^T}}{} - \SIGMA}{\infty}\sjoerd{\leq c_1} (\lambda^2+\|\SIGMA\|_{\infty})e^{-\sjoerd{c_2}\lambda^2/\|\SIGMA\|_{\infty}}$$
	and 
	$$\pnorm{\johannes{\E\round{\lambda^2\Y\Y^T}}{} - (\SIGMA-\diag(\SIGMA)+\lambda^2 \id)}{\infty}\sjoerd{\leq c_1} (\lambda^2+\|\SIGMA\|_{\infty})e^{-\sjoerd{c_2}\lambda^2/\|\SIGMA\|_{\infty}}.$$
\end{lemma}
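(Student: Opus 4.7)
The plan is to reduce the two matrix bounds to scalar estimates on individual entries and then exploit the fact that uniform dithering converts the sign function, in expectation, into a clipping function. Concretely, for $x\in\R$ and $\tau\sim\mathrm{Unif}[-\lambda,\lambda]$ a direct computation gives
\[
\E_\tau[\lambda\,\sign(x+\tau)]=f_\lambda(x):=\begin{cases}x,&|x|\leq\lambda,\\ \lambda\,\sign(x),&|x|>\lambda,\end{cases}
\]
and $|f_\lambda(x)-x|\leq |x|\mathbf 1_{\{|x|>\lambda\}}$. This is the core tool; conditioning on $\X$ eliminates the sign functions.

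For the first bound I would fix indices $i,j\in[p]$ and condition on $\X$: since $\Tau,\bar\Tau$ are independent of each other and of $\X$, the conditional expectation factorises,
\[
\E[\lambda^2 Y_i\bar Y_j\mid\X]=\lambda\,\E_{\tau_i}[\sign(X_i+\tau_i)\mid\X]\cdot\lambda\,\E_{\bar\tau_j}[\sign(X_j+\bar\tau_j)\mid\X]=f_\lambda(X_i)f_\lambda(X_j),
\]
so by the tower property it suffices to bound $|\E[f_\lambda(X_i)f_\lambda(X_j)-X_iX_j]|$. Add and subtract $f_\lambda(X_i)X_j$ and apply the pointwise estimate for $|f_\lambda-\mathrm{id}|$, yielding the two error terms
\[
\E[|X_i|\,|X_j|\mathbf 1_{\{|X_j|>\lambda\}}]\quad\text{and}\quad \lambda\,\E[|X_j|\mathbf 1_{\{|X_i|>\lambda\}}].
\]
Here I would apply Cauchy–Schwarz, the subgaussian tail bound $\P(|X_i|>\lambda)\leq 2\exp(-c\lambda^2/(K^2\|\SIGMA\|_\infty))$, and the subgaussian $L^4$-equivalence $\|X_i\|_{L^4}\lesssim_K\sqrt{\|\SIGMA\|_\infty}$ to obtain $\lesssim_K(\lambda\sqrt{\|\SIGMA\|_\infty}+\|\SIGMA\|_\infty)\exp(-c\lambda^2/(2K^2\|\SIGMA\|_\infty))$. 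Finally, AM–GM ($\lambda\sqrt{\|\SIGMA\|_\infty}\leq\tfrac12(\lambda^2+\|\SIGMA\|_\infty)$) together with absorbing constants into $c_1,c_2$ gives exactly the claimed entry-wise bound, which holds uniformly in $i,j$ and therefore in $\|\cdot\|_\infty$.

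For the second bound, the same argument applies to all off-diagonal entries ($i\neq j$): the scalar dithers $\tau_i$ and $\tau_j$ within the single vector $\Tau$ are still independent, so the factorisation $\E[\lambda^2 Y_iY_j\mid\X]=f_\lambda(X_i)f_\lambda(X_j)$ is unchanged. The diagonal, however, is exact: since the same $\tau_i$ appears on both sides, $Y_i^2=1$, hence $\E[\lambda^2 Y_i^2]=\lambda^2$, which is precisely the $(i,i)$ entry of $\SIGMA-\diag(\SIGMA)+\lambda^2\id$. Thus the off-diagonal analysis from the first bound controls the entire matrix once the diagonal has been matched, and the same final estimate is obtained.

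The computation is mostly bookkeeping; the only mild subtlety is the diagonal handling in the second statement, which is why the target matrix is $\SIGMA-\diag(\SIGMA)+\lambda^2\id$ rather than $\SIGMA$. Apart from that, the proof hinges almost entirely on the clipping identity for uniform dither and a subgaussian truncation estimate, so no deeper obstacle is anticipated.
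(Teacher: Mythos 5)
Your proposal is correct and follows essentially the same route as the cited source: reduce to the entry-wise statement, condition on $\X$ to invoke the saturation identity $\E_\tau[\lambda\,\sign(x+\tau)]=f_\lambda(x)$, observe that distinct entries of a single dither vector $\Tau$ (as well as entries of $\Tau$ versus $\bar\Tau$) are independent so the conditional expectation factorises, and then control $|\E[f_\lambda(X_i)f_\lambda(X_j)-X_iX_j]|$ via truncation, Cauchy--Schwarz, and subgaussian tail/moment bounds. The diagonal case in the second inequality ($Y_i^2\equiv 1$) is handled exactly as in the reference, which is precisely why the target matrix is $\SIGMA-\diag(\SIGMA)+\lambda^2\id$. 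One small bookkeeping slip: with your decomposition $f_\lambda(X_i)(f_\lambda(X_j)-X_j)+(f_\lambda(X_i)-X_i)X_j$ the second error term should read $\E[|X_i|\,|X_j|\mathbf 1_{\{|X_i|>\lambda\}}]$ rather than $\lambda\,\E[|X_j|\mathbf 1_{\{|X_i|>\lambda\}}]$; both forms yield the same final estimate after Cauchy--Schwarz and the subgaussian $L^4$/tail bounds, so this does not affect the validity of the argument.
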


To make use of the bias estimate in Lemma \ref{lem:linftyBiasEst}, however, we have to control the adaptive dithering ranges~$\tilde\lambda_k$. This is achieved in the following lemma.

\begin{lemma}
\label{lem:linfConcentration}
    There exist absolute constants $c,C>0$ and, for any given $K > 0$, an $L\gtrsim K^{-3}$ such that the following holds. Let $\X \in \R^p$ be a $K$-subgaussian vector with $\E \X = \0$ and $\E(\X\X^\top) = \SIGMA \in \R^{p\times p}$. Let $\X_0,\dots,\X_n \overset{i.i.d.}{\sim} \X$ and let $\lambda_k$ be defined as in \eqref{eq:lambdaAdaptive}. Then, for $t > 0$,
    \begin{align}
    \label{eqn:lambdakSubgaussian}
        \P{\lambda_{k} > (t+1) CK \sqrt{\| \SIGMA \|_\infty \log(2p)}} \le 2 e^{-ckt^2}
    \end{align}
    and 
    \begin{align}
    \label{eqn:lambdakHPLowerBd}          \P{\lambda_{k} < (1-t) L \sqrt{\| \SIGMA \|_\infty} } \le 2 e^{ - \frac{ckL^2t^2}{K^2\log(p) } }.
    \end{align}
\end{lemma}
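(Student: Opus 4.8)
The plan is to view $\lambda_k = \frac1k \sum_{j=0}^{k-1}\|\X_j\|_\infty$ as an empirical average of i.i.d.\ copies of the random variable $\|\X\|_\infty$, and to analyze both tails separately. For the upper tail \eqref{eqn:lambdakSubgaussian}, the key observation is that $\|\X\|_\infty = \max_{i\in[p]}|X_i|$ is subgaussian: since $\X$ is $K$-subgaussian, each coordinate $X_i = \langle \X,\e_i\rangle$ satisfies $\|X_i\|_{\psi_2}\leq K\SIGMA_{ii}^{1/2}\leq K\|\SIGMA\|_\infty^{1/2}$, and the standard maximal inequality for subgaussian random variables gives $\|\,\|\X\|_\infty\,\|_{\psi_2}\lesssim K\sqrt{\|\SIGMA\|_\infty\log(2p)}$ (the $2p$ comes from symmetrizing the $p$ coordinates). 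First I would centre: write $\|\X\|_\infty = \E\|\X\|_\infty + Z$ with $\E\|\X\|_\infty \lesssim K\sqrt{\|\SIGMA\|_\infty \log(2p)}$ and $Z$ a centred subgaussian variable of the same $\psi_2$-order. Then $\lambda_k - \E\|\X\|_\infty$ is an average of $k$ i.i.d.\ centred subgaussians, so by Hoeffding-type concentration (e.g.\ Vershynin's general Hoeffding inequality) its $\psi_2$-norm is $\lesssim K\sqrt{\|\SIGMA\|_\infty\log(2p)/k}$, which yields $\P{\lambda_k - \E\|\X\|_\infty > s}\leq 2\exp(-cks^2/(K^2\|\SIGMA\|_\infty\log(2p)))$. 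Choosing $s = t\cdot CK\sqrt{\|\SIGMA\|_\infty\log(2p)}$ and absorbing the mean into the $(t+1)$ factor produces \eqref{eqn:lambdakSubgaussian}.

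For the lower tail \eqref{eqn:lambdakHPLowerBd}, the mechanism is different: I need a lower bound on $\E\|\X\|_\infty$ that does not degrade, and here the point is that $\|\X\|_\infty \geq \max_i |X_i| \geq |X_{i^*}|$ for the index $i^*$ achieving $\SIGMA_{i^*i^*} = \|\SIGMA\|_\infty$; since $X_{i^*}$ has variance $\|\SIGMA\|_\infty$ and is subgaussian with constant comparable to $K\|\SIGMA\|_\infty^{1/2}$, a Paley--Zygmund / small-ball argument gives $\E|X_{i^*}| \gtrsim \|\SIGMA\|_\infty^{1/2}/K$ (more precisely, $\E|X_{i^*}|\geq (\E X_{i^*}^2)/(\E X_{i^*}^4)^{1/2}\gtrsim \|\SIGMA\|_\infty^{1/2}/K^2$, using $\E X_{i^*}^4\lesssim K^4\|\SIGMA\|_\infty^2$), which is where the $L\gtrsim K^{-3}$ (or $K^{-2}$) dependence enters. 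Then $\lambda_k = \frac1k\sum_{j}\|\X_j\|_\infty \geq \frac1k\sum_j |X_{i^*}^{(j)}|$, and again the summands are i.i.d., bounded in $\psi_2$-norm by $\lesssim K\|\SIGMA\|_\infty^{1/2}$, so Hoeffding's inequality applied to the lower tail of this average around its mean $\E|X_{i^*}| =: L'\|\SIGMA\|_\infty^{1/2}$ gives $\P{\frac1k\sum_j |X_{i^*}^{(j)}| < (1-t)L'\|\SIGMA\|_\infty^{1/2}}\leq 2\exp(-ckt^2 L'^2/K^2)$. Setting $L = L'$ and noting $L'^2/K^2 \gtrsim 1/(K^2\cdot K^4) $ — here the extra $\log(p)$ in the exponent of \eqref{eqn:lambdakHPLowerBd} is just slack (one may freely weaken the bound), so the stated form follows.

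The main obstacle I anticipate is not the concentration step, which is routine Hoeffding once the right random variable is identified, but rather pinning down the lower bound on $\E\|\X\|_\infty$ with the correct (polynomial in $K^{-1}$) constant: one must be careful that subgaussianity only gives an \emph{upper} bound on moments, so producing a matching \emph{lower} bound on $\E|X_{i^*}|$ requires a small-ball estimate, and the cleanest route is the $L^1$--$L^2$--$L^4$ comparison $\E|Y| \geq (\E Y^2)^{3/2}/(\E Y^4)$ (Hölder with exponents $3$ and $3/2$ applied to $\E Y^2 = \E(|Y|^{2/3}|Y|^{4/3})$), combined with the fourth-moment bound $\E Y^4\lesssim \|Y\|_{\psi_2}^4\lesssim K^4\|\SIGMA\|_\infty^2$. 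This gives $\E|X_{i^*}|\gtrsim \|\SIGMA\|_\infty^{1/2}/K^3$, matching the claimed $L\gtrsim K^{-3}$. A secondary point to handle cleanly is that $\lambda_k$ depends on $\X_0,\ldots,\X_{k-1}$ but we want a bound uniform in (or at least explicit in) $k$; this is automatic since the estimates above are stated for each fixed $k$, and the $k$ in the exponent correctly reflects the averaging gain.
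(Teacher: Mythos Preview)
Your approach is correct and essentially matches the paper's. The paper packages the moment estimates for $\|\X\|_\infty$ into a separate lemma (Lemma~\ref{lem:linfExpectation}) showing that $\|\X\|_\infty$ is subgaussian with parameter $\lesssim K\sqrt{\|\SIGMA\|_\infty\log(p)}$ and that $L\sqrt{\|\SIGMA\|_\infty}\leq \E\|\X\|_\infty\lesssim K\sqrt{\|\SIGMA\|_\infty\log(2p)}$, then applies Hoeffding's inequality to $\lambda_k-\E\|\X\|_\infty$ for \emph{both} tails. Your upper-tail argument is identical; for the lower tail you instead pass to the single coordinate $|X_{i^*}|$ before applying Hoeffding, which actually yields a slightly sharper exponent (no $\log(p)$ in the denominator) that you then correctly relax to the stated form. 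For the small-ball estimate the paper uses the H\"older interpolation $\|X_{i^*}\|_{L^2}\leq \|X_{i^*}\|_{L^1}^{1/4}\|X_{i^*}\|_{L^3}^{3/4}$, giving $L\gtrsim K^{-3}$, whereas your $L^1$--$L^2$--$L^4$ comparison gives $L\gtrsim K^{-2}$; both suffice for the statement. (Minor slip: your displayed inequality should read $\E|Y|\geq (\E Y^2)^{3/2}/(\E Y^4)^{1/2}$, not $/(\E Y^4)$.)
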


In order to prove Lemma \ref{lem:linfConcentration}, we control the $\ell_\infty$-norm of a subgaussian vector as follows.

\begin{lemma}
\label{lem:linfExpectation}
There exists an absolute constant $C>0$ such that the following holds. Let $\X \in \R^p$ be a $K$-subgaussian vector with $\E \X = \0$ and $\E(\X\X^\top) = \SIGMA \in \R^{p\times p}$. Then $\| \X \|_\infty$ is $(CK \sqrt{ \| \SIGMA \|_\infty \log(p)})$-subgaussian and there exists an $L \gtrsim K^{-3}$ such that
    \begin{align*}
      L \sqrt{ \| \SIGMA \|_\infty} \le  \E \| \X \|_\infty \lesssim K \sqrt{\| \SIGMA \|_\infty \log(2p)}.
    \end{align*}
\end{lemma}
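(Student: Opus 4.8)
The plan is to reduce everything to the one-dimensional marginals $X_i := \langle\X,\e_i\rangle$, upgrade a union bound over the $p$ coordinates to a genuine $\psi_2$-estimate for the maximum, and handle the lower bound separately by a moment-comparison (anti-concentration) argument. The marginal estimates come straight from the definition of $K$-subgaussianity: taking $\x=\e_i$ gives $\pnorm{X_i}{\psi_2}\le K(\E X_i^2)^{1/2}=K\sqrt{\Sigma_{ii}}$, and since $\SIGMA=\E(\X\X^T)$ is positive semidefinite we have $|\Sigma_{ij}|\le\sqrt{\Sigma_{ii}\Sigma_{jj}}$, so $\pnorm{\SIGMA}{\infty}=\max_i\Sigma_{ii}$ and $\pnorm{X_i}{\psi_2}\le K\sqrt{\pnorm{\SIGMA}{\infty}}$ for all $i$. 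A subgaussian tail bound together with a union bound over $i\in[p]$ then yields $\mathbb{P}[\pnorm{\X}{\infty}>t]\le 2p\,e^{-t^2/v^2}$ for all $t\ge 0$, where $v\simeq K\sqrt{\pnorm{\SIGMA}{\infty}}$.

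To turn this into subgaussianity of $Z:=\pnorm{\X}{\infty}\ge 0$ I would estimate its moment generating function directly via $\E\exp(Z^2/\sigma^2)=1+\int_0^\infty(2s/\sigma^2)e^{s^2/\sigma^2}\,\mathbb{P}[Z>s]\,ds$, splitting the integral at $s_0:=v\sqrt{\log(2p)}$ (bounding $\mathbb{P}[Z>s]\le 1$ for $s\le s_0$ and using the tail bound for $s>s_0$) and choosing $\sigma^2=A v^2\log(2p)$ for a sufficiently large absolute constant $A$. The crucial point is that for such $\sigma$ the effective decay rate $v^{-2}-\sigma^{-2}$ in the remaining tail integral is essentially $v^{-2}$, so the union-bound factor $2p$ is exactly cancelled by $e^{-s_0^2/v^2}=(2p)^{-1}$; the surviving terms are absolute constants that can be pushed below $1$ by enlarging $A$, yielding $\E\exp(Z^2/\sigma^2)\le 2$. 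Hence $\pnorm{\X}{\infty}$ is $\sigma$-subgaussian with $\sigma\lesssim K\sqrt{\pnorm{\SIGMA}{\infty}\log(2p)}$ (and $\log(2p)\simeq\log p$ for $p\ge 2$), and the upper bound $\E\pnorm{\X}{\infty}\lesssim K\sqrt{\pnorm{\SIGMA}{\infty}\log(2p)}$ follows at once from $\E|Z|\le\pnorm{Z}{L^2}\le\pnorm{Z}{\psi_2}$.

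For the lower bound I would pick an index $i^\star$ with $\Sigma_{i^\star i^\star}=\pnorm{\SIGMA}{\infty}$, so that $\E\pnorm{\X}{\infty}\ge\E|X_{i^\star}|$, while $\E X_{i^\star}^2=\pnorm{\SIGMA}{\infty}$ and, by subgaussianity, $\pnorm{X_{i^\star}}{L^4}\lesssim\pnorm{X_{i^\star}}{\psi_2}\le K\sqrt{\pnorm{\SIGMA}{\infty}}$. Interpolating $L^2$ between $L^1$ and $L^4$ gives $\pnorm{X_{i^\star}}{L^2}\le\pnorm{X_{i^\star}}{L^1}^{1/3}\pnorm{X_{i^\star}}{L^4}^{2/3}$, hence
\[
\E|X_{i^\star}|\ \ge\ \frac{\pnorm{X_{i^\star}}{L^2}^{3}}{\pnorm{X_{i^\star}}{L^4}^{2}}\ \gtrsim\ \frac{\pnorm{\SIGMA}{\infty}^{3/2}}{K^{2}\,\pnorm{\SIGMA}{\infty}}\ =\ \frac{\sqrt{\pnorm{\SIGMA}{\infty}}}{K^{2}}.
\]
Since every $K$-subgaussian random variable has $K\ge 1$ (because $\pnorm{Y}{\psi_2}\ge\pnorm{Y}{L^2}$ always), this gives $\E\pnorm{\X}{\infty}\ge L\sqrt{\pnorm{\SIGMA}{\infty}}$ with $L\simeq K^{-2}\gtrsim K^{-3}$, as claimed.

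The tail bound, the moment-generating-function computation, and the two-sided control of $\E\pnorm{\X}{\infty}$ are all routine; the two places that genuinely require care are keeping the correct logarithmic factor in the second step — a careless split of the integral costs a spurious factor $\sqrt p$, and one really must exploit that the Gaussian-type decay rate is $\approx v^{-2}$ rather than $\approx(2v^2)^{-1}$ in order to absorb the union-bound factor $2p$ — and the moment comparison in the third step, which is the only genuinely non-routine ingredient: it trades the upper fourth-moment bound supplied by subgaussianity for a lower bound on the first moment. I do not anticipate further obstacles.
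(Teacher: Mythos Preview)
Your proposal is correct and follows essentially the same approach as the paper: marginal $\psi_2$-bounds, a union bound promoted to a subgaussian estimate for $\|\X\|_\infty$, and a H\"older interpolation for the lower bound. The only cosmetic differences are that the paper obtains subgaussianity via a centering argument (showing $\|\X\|_\infty - C'K\sqrt{\|\SIGMA\|_\infty\log p}$ has a one-sided subgaussian tail and is bounded below) rather than your direct MGF computation, and that it interpolates $L^2$ between $L^1$ and $L^3$, yielding $L\gtrsim K^{-3}$, whereas your $L^1$--$L^4$ interpolation gives the slightly sharper $L\gtrsim K^{-2}$.
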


\begin{proof}
    If $\X$ is $K$-subgaussian, all entries $X_i$ of $\X$ satisfy by definition 
    \begin{align}
    \label{eq:psi2-normComponent}
        \| X_i \|_{\psi_2} = \| \langle \X, \e_i \rangle \|_{\psi_2} \le K ( \E \langle \X,\e_i \rangle^2 )^\frac{1}{2} \le K \sqrt{\| \SIGMA \|_\infty}.
    \end{align}
    Thus we get for any $t > 0$ that
    \begin{align*}
        \P{\| \X \|_\infty - C'K\sqrt{\| \SIGMA \|_\infty \log(p)} > t}
        &= \P{ \exists i \in [p] \colon |X_i| > t + C' K\sqrt{\| \SIGMA \|_\infty \log(p)}}
        \\
        &\le \sum_{i=1}^p \P{|X_i| > t + C'K\sqrt{\| \SIGMA \|_\infty \log(p)}} \\
        &\le 2p e^{-\frac{c(t + C' K\sqrt{\| \SIGMA \|_\infty \log(p)})^2}{K^2 \| \SIGMA \|_\infty}}
        \le 2e^{-\frac{ct^2}{K^2 \| \SIGMA \|_\infty}} e^{-\frac{2 t KcC'\sqrt{\| \SIGMA \|_\infty \log(p)}}{K^2 \| \SIGMA \|_\infty}}
        \\
        &\le 2e^{-c\frac{t^2}{K^2 \| \SIGMA \|_\infty}} \\
        &\le 2e^{-c\frac{t^2}{K^2 \| \SIGMA \|_\infty \log(p)}},
    \end{align*}
   provided that $C'$ is a large enough absolute constant. Together with 
   $$\| \X \|_\infty - K\sqrt{\| \SIGMA \|_\infty \log(p)} \ge - K\sqrt{\| \SIGMA \|_\infty \log(p)}$$ 
   this yields 
    \begin{align*}
        \| (\| \X \|_\infty - K\sqrt{\| \SIGMA \|_\infty \log(p)}) \|_{\psi_2} \lesssim K\sqrt{\| \SIGMA \|_\infty \log(p)},
    \end{align*}
    see e.g.\ \cite[Section 2.5]{vershynin2018high}, and thus the first claim via
    \begin{align*}
        \| \| \X \|_\infty \|_{\psi_2} &\le \| (\| \X \|_\infty - K\sqrt{\| \SIGMA \|_\infty \log(p)}) \|_{\psi_2} + \| K\sqrt{\| \SIGMA \|_\infty \log(p)} \|_{\psi_2} \\
        &\lesssim K\sqrt{\| \SIGMA \|_\infty \log(p)}.
    \end{align*}
    
    Since $\E\X = \0$ by assumption, \eqref{eq:psi2-normComponent} further implies $\E \exp(\theta X_i) \le \exp(CK^2 \| \SIGMA \|_\infty \theta^2)$, for some absolute constant $C > 0$ and any $\theta \in \R$, see e.g.\ \cite[Section 2.5]{vershynin2018high}, so that \cite[Proposition 7.29]{foucart2013compressed} yields the upper bound on $\E \|\X\|_\infty$ via
    \begin{align*}
        \E \| \X \|_\infty 
        = \E \left( \max_{i \in [p]} |X_i| \right)
        \lesssim K \sqrt{\| \SIGMA \|_\infty \log(2p)}.
    \end{align*}
    To see the lower bound, let $i_\star \in [p]$ denote an index such that $X_{i_\star}$ has maximal variance, i.e., $\Sigma_{i_\star, i_\star} = \| \SIGMA \|_\infty$. Then, 
    $$\|X_{i_\star}\|_{L^2} = \|\langle \X, \e_{i_\star} \rangle\|_{L^2} =  \sqrt{\| \SIGMA \|_\infty}.$$
    On the other hand, by H\"older's inequality and \eqref{eq:psi2-normComponent},
    $$\|X_{i_\star}\|_{L^2}\leq \|X_{i_\star}\|_{L^1}^{1/4}\|X_{i_\star}\|_{L^3}^{3/4}\lesssim \|X_{i_\star}\|_{L^1}^{1/4} (K \sqrt{\| \SIGMA \|_\infty})^{3/4}$$
    so that
    $$\|X_{i_\star}\|_{L^1} \gtrsim K^{-3}\sqrt{\| \SIGMA \|_\infty}$$
    and hence
    $$\E \| \X \|_\infty \geq \|X_{i_\star}\|_{L^1} \gtrsim K^{-3}\sqrt{\| \SIGMA \|_\infty}.$$
    
%
\end{proof}

\begin{proof}[Proof of Lemma \ref{lem:linfConcentration}]
    Recall that 
    \begin{align*}
        \lambda_{k}
        = \frac{1}{k} \sum_{j=0}^{k-1} \| \X_j \|_\infty.
    \end{align*}
    Since $\| \X \|_\infty$ is $(CK \sqrt{ \| \SIGMA \|_\infty \log(p)})$-subgaussian by Lemma \ref{lem:linfExpectation}, Hoeffding's inequality \cite[Theorem 2.6.2]{vershynin2018high} yields
    \begin{align*}
        \P{|\lambda_{k} - \E \| \X \|_\infty | > u}
        = \P{ \Big| \frac{1}{k} \sum_{j = 0}^{k-1} (\| \X_j \|_\infty - \E \| \X \|_\infty) \Big| > u }
        \le 2 e^{ - \frac{cku^2}{K^2 \| \SIGMA \|_\infty \log(p) } }.
    \end{align*}
    The first claim follows since $\E\|\X \|_\infty \lesssim K \sqrt{\| \SIGMA \|_\infty \log(2p)}$ by Lemma \ref{lem:linfExpectation}.

   For the second claim, we use that $\E\|\X\|_\infty \ge L \sqrt{\| \SIGMA \|_\infty}$ by Lemma \ref{lem:linfExpectation}. Combining this with the previous equation then leads to
    \begin{align*}
        \P{ \lambda_{k} < (1-t) L \sqrt{\| \SIGMA \|_\infty} }
        \le \P{|\lambda_{k} - \E \| \X \|_\infty | > t L \sqrt{\| \SIGMA \|_\infty} }
        \le 2 e^{ - \frac{ckL^2t^2}{K^2\log(p) } }
    \end{align*}
\end{proof}

Relying on Lemmas \ref{lem:linftyBiasEst} and \ref{lem:linfConcentration}, we can now derive the required bias control.

\begin{lemma}
\label{lem:biasControlUnified}
There exists an absolute constant $c$ such that the following holds.
Let $\X \in \mathbb R^p$ be a mean-zero, $K$-subgaussian vector with covariance matrix \sjoerd{$\SIGMA \in \R^{p\times p}$}.
Let $\X_0,...,\X_n \overset{\mathrm{i.i.d.}}{\sim} \X$ and let $\M \in [0,1]^{p\times p}$ be a fixed symmetric mask. Set $\tilde\lambda_k = \sqrt{\frac{4}{L^2 c_2} \log(k^2)} \lambda_k$, where $\lambda_k$ is defined in \eqref{eq:lambdaAdaptive}, and $c_2$ and $L$ are the constants from Lemmas \ref{lem:linftyBiasEst} and \ref{lem:linfConcentration} depending only on $K$.
Let $\tnorm{\cdot}$ be either the maximum, Frobenius, or operator norm. Then, for any $\theta\geq \max\{c, 2\log(n)\}$, we have with probability at least $1-2e^{-\theta}$
$$\tnorm{ \M \odot \Big( \sum_{k=1}^n \E_{k-1}( \tfrac{\tilde\lambda_k^2}{n} \Y_k\bar{\Y}_k^T) - \SIGMA \Big)}  \lesssim_{K} \theta \; \tnorm{\M} \| \SIGMA \|_\infty \frac{\log(2p) \log(n)}{n},$$
where we abbreviate the quantized samples in \eqref{eq:TwoBitSamplesAdaptive} by $\Y_k = \sign(\X_k + \Tau_k)$ and $\bar{\Y}_k =\sign(\X_k + \bar{\Tau}_k)$.
\end{lemma}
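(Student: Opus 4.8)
The plan is to reduce the norm of the matrix to its entry-wise max-norm and then combine the deterministic bias estimate from Lemma~\ref{lem:linftyBiasEst} with the high-probability two-sided control of the adaptive dithers from Lemma~\ref{lem:linfConcentration}. First I would observe that for each of the three norms in question, $\tnorm{\M\odot\W}\leq \tnorm{\M}\|\W\|_{\infty}$: this is trivial for the max-norm, follows for the Frobenius norm since $\|\M\odot\W\|_F\leq \|\M\|_F\|\W\|_{\infty}$, and for the operator norm it follows from the standard fact that Hadamard multiplication by $\M$ has operator norm at most $\|\M\|$ when $\M$ is a correlation-type matrix (more simply, one can bound $\|\M\odot\W\|\leq \|\M\|\,\|\W\|_{\infty}$ by writing $\W$ in terms of its entries, or just cite that the relevant bias matrix is a multiple of $\boldsymbol 1$-type structure). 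So it suffices to show $\|\sum_{k=1}^n \E_{k-1}(\tfrac{\tilde\lambda_k^2}{n}\Y_k\bar\Y_k^T)-\SIGMA\|_{\infty}\lesssim_K \theta\|\SIGMA\|_{\infty}\tfrac{\log(2p)\log(n)}{n}$ on the relevant event.

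Next, since $\tilde\lambda_k$ is $\mathcal F_{k-1}$-measurable and $\X_k,\Tau_k,\bar\Tau_k$ are independent of $\mathcal F_{k-1}$, I would apply Lemma~\ref{lem:linftyBiasEst} conditionally on $\mathcal F_{k-1}$ with $\lambda=\tilde\lambda_k$, giving
\[
\Big\|\E_{k-1}(\tilde\lambda_k^2\Y_k\bar\Y_k^T)-\SIGMA\Big\|_{\infty}\leq c_1(\tilde\lambda_k^2+\|\SIGMA\|_{\infty})e^{-c_2\tilde\lambda_k^2/\|\SIGMA\|_{\infty}}.
\]
Now I would introduce the good event $E$ on which, simultaneously for all $1\leq k\leq n$, the dither satisfies both bounds of Lemma~\ref{lem:linfConcentration}: a lower bound $\tilde\lambda_k^2\gtrsim_K \log(k)\|\SIGMA\|_{\infty}$ (ensuring the exponential factor is at most $k^{-2}$, by the precise choice $\tilde\lambda_k^2=\tfrac{4}{L^2c_2}\log(k^2)\lambda_k^2$ together with $\lambda_k\geq \tfrac12 L\sqrt{\|\SIGMA\|_{\infty}}$, say taking $t=1/2$ in \eqref{eqn:lambdakHPLowerBd}) and an upper bound $\tilde\lambda_k^2\lesssim_K \theta\log(k)\log(2p)\|\SIGMA\|_{\infty}$ (from \eqref{eqn:lambdakSubgaussian} with $t\simeq\sqrt{\theta/k}$, absorbing the $+1$). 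A union bound over $k=1,\dots,n$ using $\theta\geq 2\log n$ shows $\PP[E^c]\leq 2e^{-\theta}$ after adjusting constants; here the hypothesis $\theta\geq\max\{c,2\log n\}$ is exactly what makes $\sum_k e^{-ck\theta/n}$-type sums summable with the right exponent.

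On $E$, the $k$-th summand is bounded by
\[
\tfrac1n\big\|\E_{k-1}(\tilde\lambda_k^2\Y_k\bar\Y_k^T)-\SIGMA\big\|_{\infty}\lesssim_K \tfrac1n(\theta\log(k)\log(2p)\|\SIGMA\|_{\infty})\cdot k^{-2},
\]
so summing over $k$ gives $\sum_{k=1}^n k^{-2}\log(k)\lesssim 1$ and the total is $\lesssim_K \theta\log(2p)\|\SIGMA\|_{\infty}/n$, which is even stronger than the claimed bound (the extra $\log(n)$ in the statement gives slack, coming e.g.\ from a cruder bound $\log k\leq\log n$ if one prefers). Reinstating the mask factor via the first step completes the proof. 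The main obstacle is bookkeeping the constants in the choice of $\tilde\lambda_k$: one must verify that the prefactor $\tfrac{4}{L^2c_2}$ is chosen so that the lower tail bound of Lemma~\ref{lem:linfConcentration} forces $c_2\tilde\lambda_k^2/\|\SIGMA\|_{\infty}\geq 2\log k$ with the $2e^{-ckL^2t^2/(K^2\log p)}$ failure probability still dominated by $e^{-\theta}$ after the union bound — i.e.\ matching the two exponential scales (one in $t^2k$, the other in $\theta$) and the logarithmic factors correctly, which is where all the $\log p$, $\log n$, and $\theta$ dependencies in the final bound are pinned down.
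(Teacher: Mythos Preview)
Your overall strategy---reduce to the max-norm via $\tnorm{\M\odot\W}\leq\tnorm{\M}\|\W\|_\infty$, apply Lemma~\ref{lem:linftyBiasEst} conditionally, and control the resulting sum via the two-sided bounds of Lemma~\ref{lem:linfConcentration}---matches the paper's. However, there is a genuine gap in your probability accounting for the lower-tail event.

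You claim that on a single event $E$ of probability at least $1-2e^{-\theta}$ one has $\lambda_k\geq \tfrac12 L\sqrt{\|\SIGMA\|_\infty}$ for \emph{all} $1\leq k\leq n$, by taking $t=1/2$ in \eqref{eqn:lambdakHPLowerBd}. But for fixed $k$ the failure probability there is $2\exp\bigl(-ckL^2/(4K^2\log p)\bigr)$; for small $k$ (say $k=1$, where $\lambda_1=\|\X_0\|_\infty$ is a single-sample statistic) this is a constant depending only on $K$ and $p$, not on $\theta$. No union bound over $k$ can therefore push the total failure probability below $e^{-\theta}$ for arbitrarily large $\theta$. Your sentence ``$\theta\geq 2\log n$ shows $\PP[E^c]\leq 2e^{-\theta}$'' is simply false for the lower-bound part of $E$.

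The paper's fix (packaged as Lemma~\ref{lem:SumBounds}) is to introduce a threshold $k_\theta\simeq_K\theta\log p$ and split the sum. For $k>k_\theta$ the lower bound \eqref{eqn:lambdakHPLowerBd} \emph{does} hold with total failure probability $\leq 2e^{-\theta/2}$ after the union bound (this is where $\theta\geq 2\log n$ is used), and one gets the $k^{-2}$ decay you describe. For $k\leq k_\theta$ one abandons the lower bound altogether: when $\lambda_k^2\leq L^2\|\SIGMA\|_\infty$ one uses only that the exponential is at most $1$ and that $\tilde\lambda_k^2\lesssim\log(k)\|\SIGMA\|_\infty$, so these terms contribute $\lesssim k_\theta\log(k_\theta)\|\SIGMA\|_\infty\lesssim_K\theta\log(p)\log(n)\|\SIGMA\|_\infty$; when $\lambda_k^2>L^2\|\SIGMA\|_\infty$ the exponential already gives $k^{-2}$ decay. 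Thus the $\log(n)$ in the statement is not slack as you suggest---it is exactly the cost of the first $k_\theta$ terms, for which the lower bound on $\lambda_k$ is unavailable.
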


We extract the following technical observation from the proof of Lemma \ref{lem:biasControlUnified} since we are going to re-use it later.

\begin{lemma}
\label{lem:SumBounds}
There is an absolute constant $c>0$ such that the following holds. Let $\X \in \R^p$ be a $K$-subgaussian vector with $\E \X = \0$ and $\E(\X\X^\top) = \SIGMA \in \R^{p\times p}$. Let $\X_1,\dots,\X_n \overset{i.i.d.}{\sim} \X$ and let $\lambda_k$ be defined as in \eqref{eq:lambdaAdaptive}.
Set $\tilde\lambda_k = \sqrt{\frac{4}{L^2 c_2} \log(k^2)} \lambda_k$ for $k\geq 1$, where $c_2$  and $L$ are the constants from Lemmas \ref{lem:linftyBiasEst} and \ref{lem:linfConcentration} depending only on $K$. Then, for $\theta \ge c$ we have with probability at least $1-2e^{-\theta}$
\begin{equation}
\label{eqn:elemLambdaEstimate}
\tilde\lambda_k \lesssim \frac{K}{L} \sqrt{\theta\| \SIGMA \|_\infty \log(p)\log(k)}, \qquad \text{for all $1\leq k\le n$}.
\end{equation}
Moreover, for any $\theta\geq \max\{c, 2\log(n)\}$, we have with probability at least $1-2e^{-\theta}$ that
\begin{equation}
\label{eqn:sumLambdaEstimate1}
\sum_{k=1}^n (\tilde\lambda_k^2+\|\SIGMA\|_{\infty})e^{-c_2\tilde\lambda_k^2/\|\SIGMA\|_{\infty}}\lesssim \theta\frac{K^2}{L^2}\log(p)\log(n) \|\SIGMA\|_{\infty}
\end{equation}
and
\begin{equation}
\label{eqn:sumLambdaEstimate2}
\left(\sum_{k=1}^n \tilde\lambda_k^2(\tilde\lambda_k^2+\|\SIGMA\|_{\infty})e^{-c_2\tilde\lambda_k^2/\|\SIGMA\|_{\infty}}\right)^{1/2} \lesssim \theta\frac{K^2}{L^2}\log(p)\log(n) \|\SIGMA\|_{\infty}.
\end{equation}
\end{lemma}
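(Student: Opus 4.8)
The plan is to control each $\tilde\lambda_k$ on a single good event and then estimate the three sums termwise on that event. First I would establish \eqref{eqn:elemLambdaEstimate}: by the definition $\tilde\lambda_k = \sqrt{\tfrac{4}{L^2 c_2}\log(k^2)}\,\lambda_k$, it suffices to obtain a high-probability upper bound on each $\lambda_k$. Applying \eqref{eqn:lambdakSubgaussian} of Lemma~\ref{lem:linfConcentration} with $t+1 \simeq \sqrt{\theta/k}$ (more precisely, choosing $t$ so that $ckt^2 \simeq \theta + \log k$ to absorb a union bound over $k\in[n]$, using $\log k \le \theta$ when $\theta \ge c \ge \log n$) gives $\lambda_k \lesssim K\sqrt{\|\SIGMA\|_\infty\log(2p)}\cdot\max\{1,\sqrt{\theta/k}\} \lesssim K\sqrt{\theta\|\SIGMA\|_\infty\log(p)}$ simultaneously for all $k$, with probability at least $1-2e^{-\theta}$ (adjusting the constant $c$). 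Multiplying by $\sqrt{\tfrac{4}{L^2c_2}\log(k^2)}\simeq \tfrac{1}{L}\sqrt{\log k}$ yields \eqref{eqn:elemLambdaEstimate}. This same event will be the one on which \eqref{eqn:sumLambdaEstimate1} and \eqref{eqn:sumLambdaEstimate2} hold.

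Next I would handle the two sums. The key structural point is that $\tilde\lambda_k$ is bounded below: by the choice $\tilde\lambda_k = \sqrt{\tfrac{4}{L^2c_2}\log(k^2)}\,\lambda_k$ and the lower bound \eqref{eqn:lambdakHPLowerBd} (again on a high-probability event, uniformly in $k$, using $\theta \ge c\log n$ and choosing $t$ a fixed fraction like $1/2$), we get $\lambda_k \gtrsim L\sqrt{\|\SIGMA\|_\infty}$ for all $k$, hence $\tilde\lambda_k^2 \gtrsim \tfrac{1}{c_2}\log(k^2)\|\SIGMA\|_\infty$, which forces $c_2\tilde\lambda_k^2/\|\SIGMA\|_\infty \gtrsim \log(k^2) = 2\log k$ — wait, one must check the constant is at least $1$; with the prefactor $4/(L^2c_2)\cdot L^2 = 4/c_2$ one gets $c_2\tilde\lambda_k^2/\|\SIGMA\|_\infty \ge 4\log k \cdot(1-t)^2$, which for $t=1/2$ gives $\ge \log k$, so $e^{-c_2\tilde\lambda_k^2/\|\SIGMA\|_\infty}\le k^{-1}$ (and in fact $\le k^{-2}\cdot(\text{something})$ can be arranged by a slightly larger constant — but $k^{-1}$ already gives a $\log n$ factor which matches the target). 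Actually to get the clean $\log(n)$ bound in \eqref{eqn:sumLambdaEstimate1} I would aim for the exponent $\ge \log k$, i.e. $e^{-c_2\tilde\lambda_k^2/\|\SIGMA\|_\infty}\le 1/k$, and then pair this decay with the upper bound \eqref{eqn:elemLambdaEstimate} on the polynomially-growing factor $\tilde\lambda_k^2 + \|\SIGMA\|_\infty \lesssim \tfrac{K^2}{L^2}\theta\|\SIGMA\|_\infty\log(p)\log(k)$. Thus the $k$-th summand in \eqref{eqn:sumLambdaEstimate1} is $\lesssim \tfrac{K^2}{L^2}\theta\|\SIGMA\|_\infty\log(p)\cdot\tfrac{\log k}{k}$, and summing over $k\le n$ costs an extra $\log^2 n$... which is slightly more than stated. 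To recover exactly $\log(n)$ I would instead use the stronger decay $e^{-c_2\tilde\lambda_k^2/\|\SIGMA\|_\infty}\le k^{-2}$, obtainable because $c_2\tilde\lambda_k^2/\|\SIGMA\|_\infty \ge 4(1-t)^2\log k \ge 2\log k$ for $t\le 1-1/\sqrt2$; then the summand is $\lesssim \tfrac{K^2}{L^2}\theta\|\SIGMA\|_\infty\log(p)\cdot\tfrac{\log k}{k^2}$, whose sum over $k$ is $O(1)$, giving \eqref{eqn:sumLambdaEstimate1} with room to spare — I would then be slightly generous and state it with the $\log(n)$ factor as written. For \eqref{eqn:sumLambdaEstimate2}, inside the square root each summand is $\tilde\lambda_k^2$ times the previous summand, and $\tilde\lambda_k^2 \lesssim \tfrac{K^2}{L^2}\theta\|\SIGMA\|_\infty\log(p)\log(k)$, so the sum under the root is $\lesssim (\tfrac{K^2}{L^2}\theta\|\SIGMA\|_\infty\log(p))^2 \sum_k \tfrac{\log^2 k}{k^2} \lesssim (\tfrac{K^2}{L^2}\theta\|\SIGMA\|_\infty\log(p))^2$; taking the square root gives $\lesssim \tfrac{K^2}{L^2}\theta\|\SIGMA\|_\infty\log(p)$, and again I would phrase the final bound with the extra $\log n$ to match.

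Finally I would union-bound the upper-tail event and the lower-tail event from Lemma~\ref{lem:linfConcentration}, absorbing both into a single event of probability at least $1-2e^{-\theta}$ by enlarging $c$; on this event all of \eqref{eqn:elemLambdaEstimate}, \eqref{eqn:sumLambdaEstimate1}, \eqref{eqn:sumLambdaEstimate2} hold. The main obstacle, and the only place requiring real care, is the bookkeeping of absolute constants: one must verify that the specific prefactor $\tfrac{4}{L^2c_2}$ in the definition of $\tilde\lambda_k$ is large enough that, combined with the lower bound $\lambda_k \gtrsim L\sqrt{\|\SIGMA\|_\infty}$, the exponent $c_2\tilde\lambda_k^2/\|\SIGMA\|_\infty$ exceeds $2\log k$ (so that $e^{-c_2\tilde\lambda_k^2/\|\SIGMA\|_\infty}$ is summable against the polylogarithmically-growing factors), while at the same time the stated dependence on $K$ and $L$ in \eqref{eqn:elemLambdaEstimate}–\eqref{eqn:sumLambdaEstimate2} comes out correctly from Lemma~\ref{lem:linfConcentration}. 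Everything else is a routine union bound and a convergent series estimate.
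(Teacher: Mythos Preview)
Your argument for \eqref{eqn:elemLambdaEstimate} is essentially correct and matches the paper. The genuine gap is in your treatment of the lower bound on $\lambda_k$, which you need for \eqref{eqn:sumLambdaEstimate1} and \eqref{eqn:sumLambdaEstimate2}.

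You claim that, on a high-probability event uniform in $k$, one has $\lambda_k \gtrsim L\sqrt{\|\SIGMA\|_\infty}$ for \emph{all} $1\le k\le n$. This cannot be established from \eqref{eqn:lambdakHPLowerBd}. That bound reads
\[
\P{\lambda_k < (1-t)L\sqrt{\|\SIGMA\|_\infty}} \le 2\exp\Bigl(-\tfrac{ckL^2 t^2}{K^2\log p}\Bigr),
\]
and for small $k$ (in particular $k=1$) the right-hand side is essentially $2e^{-cL^2/(K^2\log p)}$, which is \emph{not} small for moderate $p$ and does not depend on $\theta$ at all. A union bound over $k$ therefore fails at the bottom of the range: taking $t$ a fixed fraction and asking the exceptional probabilities to sum to $O(e^{-\theta})$ is impossible because the first few terms are already of order one. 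Concretely, $\lambda_1 = \|\X_0\|_\infty$ is the infinity norm of a single sample and can easily fall well below $L\sqrt{\|\SIGMA\|_\infty}$ with non-negligible probability.

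The paper repairs this by introducing a threshold $k_\theta = \min\{\lceil \theta K^2 L^{-2}\log(p)/c'\rceil, n\}$. For $k>k_\theta$ the exponent in \eqref{eqn:lambdakHPLowerBd} is large enough that a union bound does yield $\lambda_k \ge \tfrac12 L\sqrt{\|\SIGMA\|_\infty}$ (here the assumption $\theta\ge 2\log n$ absorbs the factor $n$ from the union bound), and then your exponential-decay argument goes through on this tail. For $k\le k_\theta$ one cannot force the exponential to be small, so the paper further splits these early terms according to whether $\lambda_k^2 \le L^2\|\SIGMA\|_\infty$ or not: in the first case one uses the trivial bound $e^{-c_2\tilde\lambda_k^2/\|\SIGMA\|_\infty}\le 1$ together with $\tilde\lambda_k^2+\|\SIGMA\|_\infty\lesssim \log(k)\|\SIGMA\|_\infty$, and in the second case the exponential decay is available anyway. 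The crucial point is that there are only $k_\theta\lesssim \theta\frac{K^2}{L^2}\log(p)$ early terms, so even the trivially bounded piece contributes at most $k_\theta\log(k_\theta)\|\SIGMA\|_\infty\lesssim \theta\frac{K^2}{L^2}\log(p)\log(n)\|\SIGMA\|_\infty$, which is exactly the target. Your proposal is missing this splitting, and without it the argument does not close.
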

\begin{proof}
Assuming that $\theta$ is sufficiently large to guarantee $e^{-c\theta} \le \frac{1}{2}$, for $c>0$ being the constant from Lemma~\ref{lem:linfConcentration}, one can combine \eqref{eqn:lambdakSubgaussian} in Lemma \ref{lem:linfConcentration} with a union bound over all $k\geq 1$, to obtain with probability at least $1-2e^{-c\theta}$ 
    \begin{align} \label{eq:LambdaUB}
        \lambda_k \lesssim K \sqrt{\theta\| \SIGMA \|_\infty \log(2p)},
    \end{align}
for all $k \ge 1$.
This proves \eqref{eqn:elemLambdaEstimate}.

Define now $k_\theta = \min\{\lceil \theta \frac{K^2}{c'L^2} \log(p) \rceil,n\}$, where $c'>0$ only depends on the constant $c$ from Lemma \ref{lem:linfConcentration}. If $k_\theta < n$, we find by \eqref{eqn:lambdakHPLowerBd} in Lemma \ref{lem:linfConcentration} and a union bound over all $k_{\theta} < k \leq n$ 
with probability at least 
\begin{align*}
    2 \sum_{k = k_\theta + 1}^n e^{ - \frac{ckL^2}{4 K^2\log(p) } } 
    \le 2n e^{-\theta} 
    \le 2 e^{-\frac{1}{2} \theta}
\end{align*}
that 
\begin{align}
\label{eq:LambdaKLB}
    \lambda_k \ge \frac{1}{2} L \sqrt{\| \SIGMA \|_\infty}, \qquad \text{for any $k_\theta < k \le n$}.
\end{align}
Note that we used in the union bound that $\theta - \log(n) \ge \frac{1}{2} \theta$ by assumption.
Upon these events we find
\begin{align*}
& \sum_{k=1}^n (\tilde\lambda_k^2+\|\SIGMA\|_{\infty})e^{-c_2\tilde\lambda_k^2/\|\SIGMA\|_{\infty}} \\
& \qquad = \sum_{\substack{1 \le k \le k_\theta: \\ \lambda_k^2\leq L^2\|\SIGMA\|_{\infty}}} (\tilde\lambda_k^2+\|\SIGMA\|_{\infty})e^{-c_2\tilde\lambda_k^2/\|\SIGMA\|_{\infty}}
+ \sum_{\substack{1 \le k \le k_\theta: \\ \lambda_k^2 > L^2\|\SIGMA\|_{\infty}}} (\tilde\lambda_k^2+\|\SIGMA\|_{\infty})e^{-c_2\tilde\lambda_k^2/\|\SIGMA\|_{\infty}} \\
&\qquad\qquad + \sum_{k=k_{\theta}+1}^n (\tilde\lambda_k^2+\|\SIGMA\|_{\infty})e^{-c_2\tilde\lambda_k^2/\|\SIGMA\|_{\infty}},
\end{align*}
where we recall that $\tilde\lambda_k = \sqrt{\frac{4}{L^2 c_2} \log(k^2)} \lambda_k$.
Observe that 
\begin{align*}
\sum_{\substack{1 \le k \le k_\theta: \\ \lambda_k^2\leq L^2\|\SIGMA\|_{\infty}}} (\tilde\lambda_k^2+\|\SIGMA\|_{\infty})e^{-c_2\tilde\lambda_k^2/\|\SIGMA\|_{\infty}} 
\lesssim k_{\theta}\log(k_{\theta}) \|\SIGMA\|_{\infty}
\leq \theta\frac{K^2}{L^2}\log(p)\log(n) \|\SIGMA\|_{\infty}
\end{align*}
and by \eqref{eqn:elemLambdaEstimate}
\begin{align*}
\sum_{\substack{1 \le k \le k_\theta: \\ \lambda_k^2 > L^2\|\SIGMA\|_{\infty}}} (\tilde\lambda_k^2+\|\SIGMA\|_{\infty})e^{-c_2\tilde\lambda_k^2/\|\SIGMA\|_{\infty}}
&\lesssim \sum_{k=1}^{k_{\theta}}(\tilde\lambda_k^2+\|\SIGMA\|_{\infty})e^{-\log(k^2)} \lesssim \theta \frac{K^2}{L^2} \|\SIGMA\|_{\infty}\log(p) \sum_{k=1}^{k_{\theta}}\frac{\log(k)}{k^2} \\
&\lesssim  \theta \frac{K^2}{L^2} \|\SIGMA\|_{\infty}\log(p).
\end{align*}
Similarly, if $k_{\theta}<n$ then \eqref{eqn:elemLambdaEstimate} and \eqref{eq:LambdaKLB} yield
\begin{align*}
\sum_{k=k_{\theta}+1}^n (\tilde\lambda_k^2+\|\SIGMA\|_{\infty})e^{-c_2\tilde\lambda_k^2/\|\SIGMA\|_{\infty}} 
&\lesssim  \sum_{k=k_{\theta}+1}^n \left(\theta \frac{ K^2}{L^2} \|\SIGMA\|_{\infty}\log(p)\log(k) + \|\SIGMA\|_{\infty}\right)e^{-\log(k^2)} \\
&\lesssim \theta \frac{K^2}{L^2} \|\SIGMA\|_{\infty}\log(p).
\end{align*}
Collecting these estimates we obtain \eqref{eqn:sumLambdaEstimate1}. The estimate in \eqref{eqn:sumLambdaEstimate2} is proved similarly.  
\end{proof}

\begin{proof}[Proof of Lemma \ref{lem:biasControlUnified}]
Let us first observe that 
\begin{equation}
\label{eqn:takeMout}
\tnorm{ \M \odot \Big( \sum_{k=1}^n \E_{k-1}( \tfrac{\tilde\lambda_k^2}{n} \Y_k\bar{\Y}_k^T) - \SIGMA \Big)}\leq \tnorm{\M} \pnorm{\sum_{k=1}^n \E_{k-1}( \tfrac{\tilde\lambda_k^2}{n} \Y_k\bar{\Y}_k^T) - \SIGMA}{\infty}.
\end{equation}
This is clear for the maximum and Frobenius norms. For the remaining case $\tnorm{\cdot} = \| \cdot \|$, observe that $\M$ has only \sjoerd{nonnegative} entries and hence
	\begin{align}
	\label{eqn:normEstPos}
	\pnorm{ \M \odot \Big( \sum_{k=1}^n \E_{k-1}( \tfrac{\tilde\lambda_k^2}{n} \Y_k\bar{\Y}_k^T) - \SIGMA \Big)}{} \nonumber
	&=  \sup_{\v,\w \in \R^p \ : \ \|\v\|_2, \|\w\|_2 \leq 1} \abs{\sum_{i,j=1}^p M_{i,j} \Big( \sum_{k=1}^n \E_{k-1}( \tfrac{\tilde\lambda_k^2}{n} \Y_k\bar{\Y}_k^T) - \SIGMA \Big)_{i,j} v_i w_j }{} \nonumber\\
	&
	\le  \sup_{\v,\w \in \R^p \ : \ \|\v\|_2, \|\w\|_2 \leq 1} \sum_{i,j=1}^p M_{i,j} \abs{ \Big( \sum_{k=1}^n \E_{k-1}( \tfrac{\tilde\lambda_k^2}{n} \Y_k\bar{\Y}_k^T) - \SIGMA \Big)_{i,j} }{} \abs{v_i w_j}{}  \nonumber\\
	&\le \pnorm{\M}{} \pnorm{\sum_{k=1}^n \E_{k-1}( \tfrac{\tilde\lambda_k^2}{n} \Y_k\bar{\Y}_k^T) - \SIGMA}{\infty}.
\end{align}
To complete the proof, we apply Lemma~\ref{lem:linftyBiasEst} conditionally and use Lemma~\ref{lem:SumBounds} to obtain with probability at least $1-2e^{-c\theta}$
\begin{align*}
\pnorm{\sum_{k=1}^n \E_{k-1}( \tilde\lambda_k^2 \Y_k\bar{\Y}_k^T) - \SIGMA}{\infty}
	& \leq \sum_{k=1}^n \pnorm{ \E_{k-1}( \tilde\lambda_k^2 \Y_k\bar{\Y}_k^T) - \SIGMA}{\infty}
	\nonumber \\
	&
	\sjoerd{\lesssim_K} \sum_{k=1}^n (\tilde\lambda_k^2+\|\SIGMA\|_{\infty})e^{-c_2\tilde\lambda_k^2/\|\SIGMA\|_{\infty}} \\
	& \lesssim_{K} \theta\log(p) \log(n)\| \SIGMA \|_\infty,
\end{align*}
where in the final step we applied \eqref{eqn:sumLambdaEstimate1} and used that $L$ in Lemma \ref{lem:SumBounds} depends solely on $K$. 
\end{proof}

\subsection{Proof of Theorem~\ref{thm:FrobeniusDitheredMask}}
\label{sec:ProofOfFrobeniusDitheredMask}


With Lemma~\ref{lem:biasControlUnified} at hand, it remains to estimate the martingale difference term in \eqref{eqn:splitExpDithered}, i.e., the first term on the right hand side of that equation. The martingale differences in question are not uniformly bounded, so that we cannot use the Azuma-Hoeffding inequality to estimate this term. Instead, we will use the following classical extension of Rosenthal's inequality \cite{Ros70}, due to Burkholder \cite{Bur73}. The stated version with the constants $\sqrt{q}$ and $q$ can be obtained by combining \cite{NaP78} (who proved this result for independent summands) with a decoupling argument \cite{Hit94}, see \cite[Remark 3.6]{DiY19} for a more detailed discussion.

\begin{theorem}
\label{thm:BRreal}
There exists a constant $c>0$ such that for any real-valued martingale difference sequence $(\xi_k)_{k=1}^n$ 
and any $2\leq q<\infty$,
\begin{equation}
\label{eqn:BRreal}
\left(\E\left|\sum_{k=1}^n \xi_k\right|^q\right)^{\frac{1}{q}} \leq c\sqrt{q}\left(\E\left(\sum_{k=1}^n \E_{k-1}|\xi_k|^2\right)^{\frac{q}{2}}\right)^{\frac{1}{q}}+ cq\left(\sum_{k=1}^n\E |\xi_k|^q\right)^{\frac{1}{q}}.
\end{equation}
\end{theorem}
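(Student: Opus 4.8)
The statement is the classical Burkholder--Rosenthal inequality in its sharp form, so the plan is to follow the standard two--step route that yields precisely the constants $\sqrt q$ and $q$: first \emph{decouple} the martingale into a conditionally independent (tangent) sequence at a cost that does \emph{not} depend on $q$, and then invoke the Nagaev--Pinelis inequality for sums of independent mean--zero random variables conditionally on the decoupling $\sigma$-algebra.

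\textbf{Step 1 (decoupling).} I would pass to an enlarged probability space carrying a decoupled tangent sequence $(\eta_k)_{k=1}^n$: there is a $\sigma$-algebra $\mathcal{G}$ such that, conditionally on $\mathcal{G}$, the $\eta_k$ are independent and the regular conditional law of $\eta_k$ given $\mathcal{G}$ equals the conditional law of $\xi_k$ given $\mathcal{F}_{k-1}$. In particular $\E(\eta_k\mid\mathcal{G})=\E_{k-1}\xi_k=0$, $\E(\eta_k^2\mid\mathcal{G})=\E_{k-1}\xi_k^2$ and $\E(|\eta_k|^q\mid\mathcal{G})=\E_{k-1}|\xi_k|^q$. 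The key input is the domination inequality
\[
\Big(\E\Big|\sum_{k=1}^n \xi_k\Big|^q\Big)^{1/q}\leq C\Big(\E\Big|\sum_{k=1}^n \eta_k\Big|^q\Big)^{1/q},
\]
with $C$ an \emph{absolute} constant (independent of $q$ and of $n$). This $q$-uniform domination is Hitczenko's refinement of the tangent--sequence decoupling inequalities for real--valued martingale differences, and it is exactly what prevents a $q$-dependent loss from creeping into the final constants.

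\textbf{Step 2 (independent case and conditioning).} Conditionally on $\mathcal{G}$ the $\eta_k$ are independent and centred, so the Nagaev--Pinelis inequality applies to the conditional law and gives, pointwise on $\mathcal{G}$,
\[
\E\Big(\Big|\sum_{k=1}^n\eta_k\Big|^q\ \Big|\ \mathcal{G}\Big)\leq\Big(c\sqrt q\,\Big(\sum_{k=1}^n\E_{k-1}\xi_k^2\Big)^{1/2}+cq\,\Big(\sum_{k=1}^n\E_{k-1}|\xi_k|^q\Big)^{1/q}\Big)^{q},
\]
the right-hand side being $\mathcal{G}$-measurable. Taking unconditional expectations, then $q$-th roots, then Minkowski's inequality in $L^q$, and finally the tower property $\E\sum_k\E_{k-1}|\xi_k|^q=\sum_k\E|\xi_k|^q$ yields
\[
\Big(\E\Big|\sum_{k=1}^n\eta_k\Big|^q\Big)^{1/q}\leq c\sqrt q\,\Big(\E\Big(\sum_{k=1}^n\E_{k-1}\xi_k^2\Big)^{q/2}\Big)^{1/q}+cq\,\Big(\sum_{k=1}^n\E|\xi_k|^q\Big)^{1/q}.
\]
Combining this with Step 1 and absorbing $C$ into $c$ gives \eqref{eqn:BRreal}.

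\textbf{Main obstacle.} Two points carry the real weight, and in a write-up I would simply cite them. The first is that the decoupling constant in Step 1 can be taken independent of $q$: a generic UMD-type decoupling bound would carry a factor growing in $q$ and destroy the optimal $\sqrt q$/$q$ scaling, so one must appeal to the sharper domination result (Hitczenko; see also de la Pe\~na--Gin\'e). The second is the Nagaev--Pinelis inequality itself, whose proof requires the sharp split into a sub-Gaussian regime and a heavy-tailed regime --- truncating each summand at a level proportional to its dispersion, controlling the bounded part through a Bernstein/Rosenthal-type exponential-moment estimate (this is where $\sqrt q$ appears) and bounding the tails crudely by $\ell_q$-summation (this is where $q$ appears). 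Everything else --- Minkowski, the tower property, the tangency identities for conditional moments --- is routine bookkeeping.
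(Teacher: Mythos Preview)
Your proposal is correct and matches exactly the route the paper indicates: the paper does not prove Theorem~\ref{thm:BRreal} but states that the version with constants $\sqrt{q}$ and $q$ is obtained by combining the Nagaev--Pinelis inequality for independent summands with Hitczenko's $q$-uniform decoupling argument, which is precisely your Step~1/Step~2 scheme. Your identification of the two load-bearing citations (the $q$-independent decoupling constant and the sharp independent-sum inequality) is spot on.
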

To prove Theorem \ref{thm:FrobeniusDitheredMask}, we will use Theorem~\ref{thm:BRreal} in combination with the following well-known observation, which formalizes that one can pass between tail bounds and $L^q$-bounds at the expense of additional (absolute) constant factors. We provide a proof to keep our exposition self-contained.
\begin{lemma}
\label{lem:LqtoTailBound}
    Assume $0\leq q_0<q_1\leq \infty$ and let $\xi$ be a random variable. 
    \begin{enumerate}
        \item[(i)] Let $J \in \mathbb N$, $p_j>0$ and $a_j\geq 0$ for $j=1,\ldots,J$, and suppose that
        \begin{align*}
            (\mathbb{E}|\xi|^q)^{1/q}\leq \sum_{j=1}^J a_j q^{p_j},
        \end{align*}
        for all $q_0\leq q\leq q_1$. Then, 
        \begin{align*}
            \P{|\xi|\geq e \cdot \sum_{j=1}^J a_j t^{p_j}} \leq e^{-t}
        \end{align*}
        for all $q_0\leq t\leq q_1$. 
        \item[(ii)] If
        \begin{align*}
            \P{|\xi|\geq  a t^{p}} \leq e^{-t},
        \end{align*}
        for fixed $a>0$, $p\geq \tfrac{1}{2}$, and any $t \ge q_0$, then
        \begin{align*}
            (\mathbb{E}|\xi|^q)^{1/q} \lesssim aq_0^p + p^{p} a q^p,
        \end{align*}
        for any $q \ge 1$.
    \end{enumerate}
\end{lemma}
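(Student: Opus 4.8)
The plan is to prove both parts of Lemma~\ref{lem:LqtoTailBound} by elementary Markov-type arguments, optimizing over the free parameter $q$ in each direction.

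\textbf{Part (i): $L^q$-bounds imply tail bounds.} First I would fix $t$ with $q_0 \le t \le q_1$ and set $q = t$ in the hypothesis, so that $(\E|\xi|^q)^{1/q} \le \sum_{j=1}^J a_j q^{p_j}$. Denote $S(q) = \sum_{j=1}^J a_j q^{p_j}$ and note $S$ is nondecreasing in $q$. By Markov's inequality applied to $|\xi|^q$,
\begin{align*}
\P{|\xi| \ge e \cdot S(t)} = \P{|\xi|^t \ge (e\,S(t))^t} \le \frac{\E|\xi|^t}{(e\,S(t))^t} \le \frac{S(t)^t}{e^t S(t)^t} = e^{-t},
\end{align*}
which is exactly the claim. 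This is the easy direction; the only subtlety is that the hypothesis must hold \emph{at} the value $q = t$, which is guaranteed by the range assumption.

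\textbf{Part (ii): tail bounds imply $L^q$-bounds.} Here I would use the layer-cake / integration-by-parts formula $\E|\xi|^q = \int_0^\infty \P{|\xi| \ge s}\, q s^{q-1}\,dx{s}$ and split the integral at $s_0 = a q_0^p$. On $[0,s_0]$ one bounds $\P{|\xi|\ge s} \le 1$, contributing $s_0^q = (aq_0^p)^q$, i.e.\ a term $\le a q_0^p$ after taking the $q$-th root. On $[s_0,\infty)$ one substitutes $s = a u^p$ (so $u \ge q_0$ and $u = (s/a)^{1/p}$), uses $\P{|\xi| \ge s} = \P{|\xi| \ge a u^p} \le e^{-u}$, and reduces to a Gamma-type integral $\int_{q_0}^\infty e^{-u} (a u^p)^{q-1} \cdot a p u^{p-1}\,dx{u} = a^q p \int_{q_0}^\infty u^{pq-1} e^{-u}\,dx{u} \le a^q p\, \Gamma(pq)$. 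Then $\Gamma(pq)^{1/q} \lesssim (pq)^{p}$ by Stirling (using $p \ge \tfrac12$ so $pq \ge \tfrac12$ stays bounded away from the pole behaviour, and the extra factor $p^{1/q} \le$ const or is absorbed), giving the term $p^p a q^p$ up to absolute constants. Combining the two pieces via $(x+y)^{1/q} \le x^{1/q} + y^{1/q}$ yields $(\E|\xi|^q)^{1/q} \lesssim a q_0^p + p^p a q^p$.

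\textbf{Main obstacle.} The genuinely delicate point is the Stirling estimate in Part (ii): one needs $\Gamma(pq)^{1/q} \lesssim p^p q^p$ \emph{uniformly} in $q \ge 1$ with only an absolute implied constant, and one must track the stray factor $p^{1/q}$ (and the lower endpoint contribution $\Gamma(pq)$ versus $\Gamma(pq+1)$) carefully when $pq$ is small, which is exactly why the hypothesis $p \ge \tfrac12$ is imposed. Everything else is routine bookkeeping; I would state the needed Stirling bound $\Gamma(x) \le C x^{x} e^{-x}\sqrt{x}$ for $x \ge \tfrac12$ (or simply $\Gamma(x+1) \le (Cx)^x$ for $x \ge \tfrac12$) and apply it directly. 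I would also remark that the $\E|\xi|^q$ in the statement should be read as $\E|\xi|^q$ and that part (i)'s conclusion only makes sense for $t$ in the stated range, matching how the lemma is invoked later (with $q_0 = \max\{2,\log n\}$ and $q_1 = \infty$ in the martingale applications).
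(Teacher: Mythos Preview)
Your proposal is correct and follows essentially the same route as the paper: Part~(i) is Markov's inequality with $q=t$, and Part~(ii) is the layer-cake formula split at $aq_0^p$, followed by the substitution $s=au^p$ to produce $pq\,a^q\Gamma(pq)$ and then Stirling. Your identification of the Stirling step (and the role of $p\ge\tfrac12$) as the only point requiring care matches the paper's treatment.
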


\begin{proof}
		To show $(i)$, 
  we use Markov's inequality to find
		\begin{align*}
		    \johannes{\P{|\xi|\geq e\sum_{j=1}^J a_j t^{p_j}}}{}
    		\leq \frac{\mathbb{E}|\xi|^t}{(e\sum_{j=1}^J a_j t^{p_j})^t}\leq e^{-t}.
		\end{align*}

        To see $(ii)$, note that by assumption $\P{|\xi|\geq  u \}} \leq e^{-\frac{u^{1/p}}{a^{1/p}}}$ for all $u\geq a q_0^p$. Hence, for any $q \ge 1$
        \begin{align*}
            \mathbb{E}|\xi|^q 
            &= \int_0^{aq_0^p} q u^{q-1} \P{|\xi|\geq  u } du
            + \int_{aq_0^p}^\infty q u^{q-1} \P{|\xi|\geq  u } du 
            \le a^q q_0^{qp} + q \cdot \int_{aq_0^p}^\infty u^{q-1} \cdot e^{-\frac{u^{1/p}}{a^{1/p}}} du \\
            &\leq a^q q_0^{qp} + q \cdot \int_{0}^\infty (a t^p)^{q-1} \cdot e^{-t} \cdot (apt^{p-1}) dt 
            = a^q q_0^{qp} + pq \cdot a^q \cdot \Gamma(pq) \\
            &\le a^q q_0^{qp} + \sqrt{2\pi} a^q \cdot (pq)^{pq+\frac{1}{2}} \cdot e^{-pq},
        \end{align*}
        where we used integration by parts and Stirling's approximation of the Gamma function. Taking the $q$-th root on both sides of the inequality now yields $(ii)$.
\end{proof}

Finally, we have all the tools we need to prove Theorem \ref{thm:FrobeniusDitheredMask}. 
%

\begin{proof}[Proof of Theorem~\ref{thm:FrobeniusDitheredMask}]
We only need to show the maximum norm bound. The Frobenius norm bound then follows from the fact that $\| \M \odot \A \|_F \le \| \M \|_F \| \A  \|_\infty$, for any $\A \in \R^{p\times p}$.

Recall \eqref{eqn:splitExpDithered} and abbreviate $\Y_k = \sign(\X_k + \Tau_k)$ and $\bar{\Y}_k = \sign(\X_k + \bar{\Tau}_k)$. Set $\tilde\lambda_k = \sqrt{\frac{4}{L^2 c_2} \log(k^2)} \lambda_k$ for $k\geq 1$, where $c_2$ and $L$ are the constants from Lemmas \ref{lem:linftyBiasEst} and \ref{lem:linfConcentration}, respectively. Applying Lemma~\ref{lem:biasControlUnified} to the second term on the right hand side of \eqref{eqn:splitExpDithered} yields
\begin{align}
\label{eq:SecondTermBoundLinfty}
    \left\| \M \odot \Big( \sum_{k=1}^n \E_{k-1}( \tfrac{\tilde\lambda_k^2}{n} \Y_k\bar{\Y}_k^T) - \SIGMA \Big) \right\|_\infty  \lesssim_{K} \theta \| \M \|_\infty \| \SIGMA \|_\infty \frac{\log(2p) \log(n)}{n}.
\end{align} 
To complete the proof, we estimate the first term on the right hand side of \eqref{eqn:splitExpDithered}.
First note that for $q \geq \log(m)$ and random variables $Z_1,\dots,Z_m$
\begin{align}
\label{eq:Lq_maxBound}
    \left( \E \max_{i \in [m]} |Z_i|^q \right)^{\frac{1}{q}}
    \le \left( \sum_{i=1}^m \E |Z_i|^q \right)^{\frac{1}{q}}
    \le m^{\frac{1}{\log(m)}} \max_{i \in [m]} \left( \E |Z_i|^q \right)^{\frac{1}{q}} 
    \lesssim \max_{i \in [m]} \left( \E |Z_i|^q \right)^{\frac{1}{q}}
\end{align}
so that for any $q \ge \log(p^2)$
\begin{align}
\label{eq:FirstTermBoundLinfty}
    \left(\E\left\|\SIGMA'_n - \sum_{k=1}^n \E_{k-1}( \tfrac{\tilde\lambda_k^2}{n} \Y_k\bar{\Y}_k^T)\right\|_{\infty}^q\right)^{1/q} 
    & \lesssim \frac{1}{n} \max_{1\leq i,j\leq p}\left(\E\left(\sum_{k=1}^n \tilde\lambda_k^2 \Big( \Y_k\bar{\Y}_k^T - \E_{k-1}( \Y_k\bar{\Y}_k^T) \Big)_{ij}\right)^q\right)^{1/q}.        
\end{align}
We aim to apply Theorem~\ref{thm:BRreal} to the right-hand side of \eqref{eq:FirstTermBoundLinfty}.
Fix $i,j \in [p]$ and define the sequence
$$\xi_k := \tilde\lambda_k^2 \Big( (Y_k)_i (\bar{Y}_k)_j - \E_{k-1}( (Y_k)_i (\bar{Y}_k)_j) \Big).$$
We need to check the assumptions of Theorem \ref{thm:BRreal} and to estimate the quantities appearing on the right-hand side of \eqref{eqn:BRreal}.
Clearly, $(\xi_k)$ forms a real-valued martingale difference sequence with respect to the filtration defined by \eqref{eqn:filtrationDef}, i.e.,
$$\mathcal{F}_k = \sigma(\X_0,\X_1,\ldots,\X_k,\Tau_1,\ldots,\Tau_k,\bar{\Tau}_1, \ldots,\bar{\Tau}_k), \qquad k=0,1,\ldots,n.$$
Further observe that $\tilde\lambda_k$ is $\mathcal{F}_{k-1}$-measurable and hence
\begin{align*}
    \E_{k-1}|\xi_k|^2 = \tilde\lambda_k^4 \E_{k-1}\Big[\Big( (Y_k)_i (\bar{Y}_k)_j - \E_{k-1}( (Y_k)_i (\bar{Y}_k)_j) \Big)^2\Big]\leq 4 \tilde\lambda_k^4
\end{align*}
since $(Y_k)_i,(Y_k)_j \in \{-1,1\}$. Hence, by combining \eqref{eqn:elemLambdaEstimate} in Lemma \ref{lem:SumBounds} with Lemma \ref{lem:LqtoTailBound} we obtain for any $q \ge 1$ that
\begin{align*}
    \left\|\left(\sum_{k=1}^n \E_{k-1}|\xi_k|^2\right)^{\frac{1}{2}}\right\|_{L^q} 
    &\leq 2 \left(\sum_{k=1}^n \|\tilde\lambda_k^4\|_{L^{\frac{q}{2}}}\right)^{1/2} 
    = 2\left(\sum_{k=1}^n \|\tilde\lambda_k\|_{L^{2q}}^4\right)^{1/2} \\
    &\lesssim_{K}  \left(\sum_{k=1}^n q^2\log^2(p)\log^2(k)\|\SIGMA\|_{\infty}^2\right)^{1/2} \\
    &\leq \sqrt{n} \cdot q\log(p)\log(n)\|\SIGMA\|_{\infty}.
\end{align*} 
Along similar lines, we get for any $q \ge \log(n)$ that
\begin{align*}
    \left(\sum_{k=1}^n \|\xi_k\|_{L^q}^q \right)^{\frac{1}{q}}
    &\leq 2\left(\sum_{k=1}^n\|\tilde\lambda_k^2\|_{L^q}^q\right)^{\frac{1}{q}} \lesssim_K n^{1/q} \cdot q\log(p)\log(n)\|\SIGMA\|_{\infty} \\
    &\lesssim q\log(p)\log(n)\|\SIGMA\|_{\infty}.
\end{align*}
Applying Theorem~\ref{thm:BRreal} now to the right-hand side of \eqref{eq:FirstTermBoundLinfty} yields 
 \begin{align*}
        \left(\E\left\|\SIGMA'_n - \sum_{k=1}^n \E_{k-1}( \tfrac{\tilde\lambda_k^2}{n} \Y_k\bar{\Y}_k^T)\right\|_{\infty}^q\right)^{1/q} \lesssim_K \frac{q^{3/2}\log(p)\log(n)\|\SIGMA\|_{\infty}}{\sqrt{n}} + \frac{q^2\log(p)\log(n)\|\SIGMA\|_{\infty}}{n},
\end{align*}
for any $q \ge \max\{ 2\log(q),\log(n) \}$. Translating this into a tail bound via Lemma \ref{lem:LqtoTailBound} and inserting it together with \eqref{eq:SecondTermBoundLinfty} into the right-hand side of \eqref{eqn:splitExpDithered} concludes the proof. 
\end{proof}

\subsection{Proof of Theorem \ref{thm:OperatorDitheredMask}}
\label{sec:ProofOfOperatorDitheredMask}

The proof follows the lines of the proof of Theorem~\ref{thm:FrobeniusDitheredMask}. We again use Lemmas~\ref{lem:biasControlUnified} and \ref{lem:SumBounds}. The main difference is that we use the Burkholder-Rosenthal for matrix martingales (Theorem \ref{thm:matrixBRintro}) instead of Theorem \ref{thm:BRreal}. We will additionally need the following observation, which is a simple consequence of Schur's product theorem \cite[Eq. (3.7.11)]{johnson1990matrix}.

\begin{lemma}
\label{lem:HadamardOperatorNorm}
	Let $\A,\B \in \R^{p\times p}$, let $\A$ be symmetric and define $\C = (\A^T\A)^\frac{1}{2}$. Then
	\begin{align*}
	\pnorm{\A \odot \B}{} \le \round{\max_{i \in [p]} C_{ii}} \pnorm{\B}{} = \pnorm{\A}{1\rightarrow 2} \pnorm{\B}{}.
	\end{align*}{}
	If $\A$ is in addition positive semidefinite, then
	\begin{align*}
	\pnorm{\A \odot \B}{} \le \round{\max_{i \in [p]} A_{ii} } \pnorm{\B}{}.
	\end{align*}
\end{lemma}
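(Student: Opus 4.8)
The statement has two ingredients: the Hadamard-multiplier estimate $\pnorm{\A\odot\B}{}\le(\max_i C_{ii})\pnorm{\B}{}$, and the comparison of $\max_i C_{ii}$ with the maximal column norm $\pnorm{\A}{1\to 2}$. For the first ingredient the plan is to reduce to the positive semidefinite case by a single $2\times 2$ block dilation that in one stroke symmetrizes $\B$ and replaces $\A$ by $\C=(\A^T\A)^{1/2}=|\A|$; the PSD case is then immediate from Schur's product theorem. The second ingredient is a one-line Cauchy--Schwarz estimate.

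\textbf{The PSD core.} First I would prove the bound when $\A\succeq\0$ and $\B$ is \emph{symmetric}. Then $-\pnorm{\B}{}\id\preceq\B\preceq\pnorm{\B}{}\id$, so Schur's product theorem (the Hadamard product of two positive semidefinite matrices is positive semidefinite), applied to $\pnorm{\B}{}\id-\B\succeq\0$ and $\pnorm{\B}{}\id+\B\succeq\0$, gives $\A\odot(\pnorm{\B}{}\id-\B)\succeq\0$ and $\A\odot(\pnorm{\B}{}\id+\B)\succeq\0$; equivalently $-\pnorm{\B}{}(\A\odot\id)\preceq\A\odot\B\preceq\pnorm{\B}{}(\A\odot\id)$. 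Since $\A\odot\id$ is the diagonal part of $\A$ and $A_{ii}\ge0$, we have $\0\preceq\A\odot\id\preceq(\max_i A_{ii})\id$, and as $\A\odot\B$ is symmetric this yields $\pnorm{\A\odot\B}{}\le\pnorm{\B}{}\max_i A_{ii}$. When $\A\succeq\0$ one has $\C=\A$, so this is precisely the second displayed inequality of the lemma.

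\textbf{Reduction and conclusion.} For a general symmetric $\A$ and arbitrary $\B$, write the spectral decomposition $\A=\U\D\U^T$, put $|\A|:=\U|\D|\U^T$, and note $(\A^T\A)^{1/2}=(\A^2)^{1/2}=|\A|=\C$. Consider the block matrices
\[
\tilde\A=\begin{pmatrix}|\A| & \A\\ \A & |\A|\end{pmatrix},\qquad \tilde\B=\begin{pmatrix}\0 & \B\\ \B^T & \0\end{pmatrix}.
\]
Here $\tilde\B$ is symmetric with $\pnorm{\tilde\B}{}=\pnorm{\B}{}$ (the usual Hermitian dilation), and $\tilde\A\succeq\0$: conjugating $\tilde\A$ by the block-diagonal orthogonal matrix with diagonal blocks $\U$ turns it into $\bigl(\begin{smallmatrix}|\D| & \D\\ \D & |\D|\end{smallmatrix}\bigr)$, which, after permuting indices, is the direct sum over the eigenvalues $d_r$ of $\A$ of the $2\times2$ blocks $\bigl(\begin{smallmatrix}|d_r| & d_r\\ d_r & |d_r|\end{smallmatrix}\bigr)$, each of trace $2|d_r|\ge0$ and determinant $0$, hence positive semidefinite. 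Since $|\A|\odot\0=\0$, we get $\tilde\A\odot\tilde\B=\bigl(\begin{smallmatrix}\0 & \A\odot\B\\ \B^T\odot\A & \0\end{smallmatrix}\bigr)$, so $\pnorm{\tilde\A\odot\tilde\B}{}=\pnorm{\A\odot\B}{}$, while $\max_i(\tilde\A)_{ii}=\max_i(|\A|)_{ii}=\max_i C_{ii}$. Applying the PSD core to $\tilde\A,\tilde\B$ therefore gives $\pnorm{\A\odot\B}{}\le(\max_i C_{ii})\pnorm{\B}{}$. Finally, by Cauchy--Schwarz $C_{ii}=\langle\C\e_i,\e_i\rangle\le\pnorm{\C\e_i}{2}=\langle\C^2\e_i,\e_i\rangle^{1/2}=\langle\A^T\A\,\e_i,\e_i\rangle^{1/2}=\pnorm{\A\e_i}{2}$, whence $\max_i C_{ii}\le\max_j\pnorm{\A\e_j}{2}=\pnorm{\A}{1\to 2}$, and the claimed bound $\pnorm{\A\odot\B}{}\le\pnorm{\A}{1\to 2}\pnorm{\B}{}$ follows.

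\textbf{Main obstacle and an alternative.} The only point needing a moment's care is the positive semidefiniteness of $\tilde\A$, which as above collapses to a trivial $2\times2$ eigenvalue check; everything else is bookkeeping with block matrices. If one prefers to avoid dilations altogether, an equivalent self-contained route is to read off from $\A=\U\D\U^T$ the factorization $A_{ij}=\langle a_i,b_j\rangle$ with $a_i=(|d_r|^{1/2}U_{ir})_r$ and $b_i=(\sign(d_r)|d_r|^{1/2}U_{ir})_r$, so that $\pnorm{a_i}{2}=\pnorm{b_i}{2}=C_{ii}^{1/2}$, and then, for unit vectors $\u,\v$, to write $\langle(\A\odot\B)\v,\u\rangle=\sum_r\langle\B q^{(r)},p^{(r)}\rangle$ with $p^{(r)}_i=u_i(a_i)_r$ and $q^{(r)}_j=v_j(b_j)_r$, bounding this by $\pnorm{\B}{}\bigl(\sum_r\pnorm{p^{(r)}}{2}^2\bigr)^{1/2}\bigl(\sum_r\pnorm{q^{(r)}}{2}^2\bigr)^{1/2}$ via Cauchy--Schwarz and using $\sum_r\pnorm{p^{(r)}}{2}^2=\sum_i u_i^2 C_{ii}\le\max_i C_{ii}$ (and likewise for $q$).
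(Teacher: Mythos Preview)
Your argument is correct and complete. The paper does not actually give a proof of this lemma: it simply states that the result ``is a simple consequence of Schur's product theorem \cite[Eq.~(3.7.11)]{johnson1990matrix}''. Your proof is a self-contained version of exactly this idea --- the PSD case is handled directly by Schur's theorem, and the general symmetric case is reduced to it via the block dilation $\tilde\A=\bigl(\begin{smallmatrix}|\A|&\A\\\A&|\A|\end{smallmatrix}\bigr)$ paired with the Hermitian dilation $\tilde\B$ of $\B$. The alternative factorization route you sketch at the end is the other standard proof and is also fine.

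One small remark worth making explicit: you prove $\max_i C_{ii}\le \pnorm{\A}{1\to 2}$ via Cauchy--Schwarz, which suffices for the inequality $\pnorm{\A\odot\B}{}\le\pnorm{\A}{1\to 2}\pnorm{\B}{}$ that the paper actually uses. The \emph{equality} $\max_i C_{ii}=\pnorm{\A}{1\to 2}$ written in the lemma statement is in fact not true in general (e.g.\ $\A=\bigl(\begin{smallmatrix}1&1\\1&1\end{smallmatrix}\bigr)$ gives $\C=\A$, $\max_i C_{ii}=1$, but $\pnorm{\A}{1\to 2}=\sqrt{2}$), so you are right not to claim it. This does not affect anything downstream, since only the upper bound is ever invoked.
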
{}

%

\begin{proof}[Proof of Theorem~\ref{thm:OperatorDitheredMask}]

    Recall that $\Y_k = \sign(\X_k + \Tau_k)$, $\bar{\Y}_k = \sign(\X_k + \bar{\Tau}_k)$, and set $\tilde\lambda_k = \sqrt{\frac{4}{L^2 c_2} \log(k^2)} \lambda_k$ for $k\geq 1$, where $c_2$ and $L$ are the constants from Lemmas~\ref{lem:linftyBiasEst} and \ref{lem:linfConcentration}, respectively. 
We again use the split in \eqref{eqn:splitExpDithered}. Applying Lemma~\ref{lem:biasControlUnified} to the second term on the right hand side of \eqref{eqn:splitExpDithered} yields
    \begin{align}
    \label{eq:SecondTermBoundLoperator}
        \left\| \M \odot \Big( \sum_{k=1}^n \E_{k-1}( \tfrac{\tilde\lambda_k^2}{n} \Y_k\bar{\Y}_k^T) - \SIGMA \Big) \right\|  \lesssim_{K} \theta \| \M \| \| \SIGMA \|_\infty \frac{\log(2p) \log(n)}{n}.
    \end{align} 
    To complete the proof, we will estimate the first term on the right hand side of \eqref{eqn:splitExpDithered} using Theorem~\ref{thm:matrixBRintro}. To this end, define
    \begin{align*}
        {\XI}_k 
        = \frac{\tilde\lambda_k^2}{n} \M\odot \round{ \Y_k\bar{\Y}_k^T - \E_{k-1}\round{\Y_k\bar{\Y}_k^T} }{}
        \quad \text{ \sjoerd{so} that } \quad
        \M \odot \Big( \SIGMA'_n - \sum_{k=1}^n \E_{k-1}( \tfrac{\tilde\lambda_k^2}{n} \Y_k\bar{\Y}_k^T) \Big) = \sum_{k=1}^n \XI_k.
    \end{align*}
Since $\E\|\XI_k\| < \infty$ and $\E_{k-1} \XI_k = \0$ by construction, the sequence $\XI_1,\dots,\XI_n$ forms a matrix martingale difference sequence with respect to the filtration \eqref{eqn:filtrationDef} and we can hence apply Theorem~\ref{thm:matrixBRintro}. Let us now estimate the quantities appearing in the bounds of Theorem \ref{thm:matrixBRintro}. For any $1\leq k\leq n$,
    \begin{equation*}
        \|{\XI}_k\| = \frac{\tilde\lambda_k^2}{n} \| \M \odot \Y_k\bar{\Y}_k^T - \E_{k-1}\round{\M \odot \Y_k\bar{\Y}_k^T} \| 
        \leq \frac{2\tilde\lambda_k^2 \|\M\|}{n}
    \end{equation*}
    since $\M \odot \Y_k \bar{\Y}_k^T = \diag(\Y_k) \; \M  \; \diag(\bar{\Y}_k)$ and $\pnorm{\diag(\Y_k)}{} = 1$. Hence,
    \begin{equation}
    \label{eqn:XikMaxNormEstimateLq}
        \max_{1\leq k\leq n} (\E\|{\XI}_k\|^q)^{1/q} \lesssim \frac{\|\M\|}{n}\max_{1\leq k\leq n} \| \tilde\lambda_k^2 \|_{L^q}\lesssim_K \frac{\|\M\|}{n}q\|\SIGMA\|_{\infty}\log(p)\log(n)
    \end{equation} 
    where we used in the last inequality a moment bound for $\tilde\lambda_k^2$ derived from \eqref{eqn:elemLambdaEstimate} via Lemma \ref{lem:LqtoTailBound}. 
    Next, using that $(\Y_k,\bar{\Y}_k)$ and $(\bar{\Y}_k,\Y_k)$ are identically distributed, we get 
    \begin{align*}
    & \Big\|\Big(\sum_{k=1}^n \mathbb{E}_{k-1}(\XI_k^T\XI_k) \Big)^{1/2}\Big\| \\
    &\qquad = \pnorm{ \sum_{k=1}^n \frac{\tilde\lambda_k^4}{n^2} \left( \mathbb{E}_{k-1}[(\M \odot \Y_k\bar{\Y}_k^T)^T(\M \odot \Y_k\bar{\Y}_k^T)] - \Big(\M \odot \johannes{\E_{k-1}\round{ \Y_k\bar{\Y}_k^T }}{} \Big)^T \Big(\M \odot \johannes{\E_{k-1}\round{ \Y_k\bar{\Y}_k^T }}{} \Big) \right) }{}^{1/2}\\
    &\qquad= \pnorm{ \sum_{k=1}^n \frac{\tilde\lambda_k^4}{n^2} \left( \M^2\odot\johannes{\E_{k-1}(\bar{\Y}_k\bar{\Y}_k^T)}{} - \Big( \M \odot \johannes{\E_{k-1}\round{ \Y_k\bar{\Y}_k^T }}{} \Big)^T \Big( \M \odot \johannes{\E_{k-1}\round{ \Y_k\bar{\Y}_k^T }}{} \Big) \right) }{}^{1/2}\\
    &\qquad= \pnorm{ \sum_{k=1}^n \frac{\tilde\lambda_k^4}{n^2} \left( \M^2\odot \johannes{\E_{k-1}(\Y_k\Y_k^T)}{} - \Big( \M \odot \johannes{\E_{k-1}\round{ \bar{\Y}_k\Y_k^T }}{} \Big)^T \Big( \M \odot \johannes{\E_{k-1}\round{ \bar{\Y}_k\Y_k^T }}{} \Big) \right) }{}^{1/2},
    \end{align*}
    where we used in the second equality that by $\diag(\Y_k)^2 = \id$
    \begin{align}
    \label{eq:Msquared}
        (\M\odot \Y_k\bar{\Y}_k^T)^T (\M\odot \Y_k\bar{\Y}_k^T)
       = \diag(\bar{\Y}_k)\;\M\; \diag(\Y_k)\;\diag(\Y_k)\;\M\;\diag(\bar{\Y}_k)
       =\M^2 \odot \bar{\Y}_k\bar{\Y}_k^T.
    \end{align}
    Interchanging the roles of $\Y_k$ and $\bar{\Y}_k$ yields
    \begin{align*}
    & \Big\|\Big(\sum_{k=1}^n \mathbb{E}_{k-1}(\XI_k\XI_k^T) \Big)^{1/2}\Big\| \\
    & \qquad = \pnorm{ \sum_{k=1}^n \frac{\tilde\lambda_k^4}{n^2} \left( \M^2\odot \johannes{\E_{k-1}(\Y_k\Y_k^T)}{} - \Big( \M \odot \johannes{\E_{k-1}\round{ \bar{\Y}_k\Y_k^T }}{} \Big)^T \Big( \M \odot \johannes{\E_{k-1}\round{ \bar{\Y}_k\Y_k^T }}{} \Big) \right) }{}^{1/2}.
    \end{align*}
    Since by \eqref{eq:Kadison}
    $$\Big( \M \odot \johannes{\E_{k-1}\round{ \bar{\Y}_k\Y_k^T }}{} \Big)^T \Big( \M \odot \johannes{\E_{k-1}\round{ \bar{\Y}_k\Y_k^T }}{} \Big) 
    \preceq \E_{k-1} \Big( (\M \odot \bar{\Y}_k\Y_k^T )^T ( \M \odot \bar{\Y}_k\Y_k^T ) \Big) $$ 
    we find
    \begin{align*}
    & \max\left\{ \Big\|\Big(\sum_{k=1}^n \mathbb{E}_{k-1}(\XI_k^T\XI_k) \Big)^{1/2}\Big\|, \Big\|\Big(\sum_{k=1}^n \mathbb{E}_{k-1}(\XI_k\XI_k^T) \Big)^{1/2}\Big\| \right\} \\
    &\quad \leq \left( \sum_{k=1}^n \frac{2 \tilde\lambda_k^4}{n^2} \pnorm{ \M^2\odot \E_{k-1}(\Y_k\Y_k^T)}{} \right)^{1/2}\\
    & \quad \leq \left( \sum_{k=1}^n \frac{2 \tilde\lambda_k^2}{n^2} \pnorm{ \M^2\odot \left(\tilde\lambda_k^2\johannes{\E_{k-1}\round{\Y_k\Y_k^T}}{} - (\SIGMA-\diag(\SIGMA)+\tilde\lambda_k^2 \id) \right) }{} \right. \\
    & \quad \qquad \qquad \qquad \qquad \qquad \qquad \qquad \qquad + \left. \sum_{k=1}^n \frac{2 \tilde\lambda_k^2}{n^2} \pnorm{ \M^2\odot (\SIGMA-\diag(\SIGMA)+\tilde\lambda_k^2 \id) }{} \right)^{1/2},
    \end{align*}
    where we re-used \eqref{eq:Msquared} in the first inequality.
    By the same reasoning as in \eqref{eqn:normEstPos} together with Lemma~\ref{lem:linftyBiasEst} 
    \begin{align*}
    &\pnorm{ \M^2\odot \left(\tilde\lambda_k^2\johannes{\E_{k-1}\round{\Y_k\Y_k^T}}{} - (\SIGMA-\diag(\SIGMA)+\tilde\lambda_k^2 \id) \right) }{} \\
    &\quad \leq \|\M^2\| \pnorm{\tilde\lambda_k^2\johannes{\E_{k-1}\round{\Y_k\Y_k^T}}{} - (\SIGMA-\diag(\SIGMA)+\tilde\lambda_k^2 \id)}{\infty} 
     \sjoerd{\lesssim_K} \|\M\|^2 (\tilde\lambda_k^2+\|\SIGMA\|_{\infty})e^{-c_2\tilde\lambda_k^2/\|\SIGMA\|_{\infty}} 
    \end{align*}
    Moreover, by Lemma~\ref{lem:HadamardOperatorNorm}, we find
	\begin{align*}
	\|\M^2 \odot (\SIGMA-\diag(\SIGMA)+\tilde\lambda_k^2 \id) \| 
	&\leq \|\M^2\odot\SIGMA\|+ \|\M^2\odot \id \odot\SIGMA\| + \tilde\lambda_k^2 \|\M^2\odot \id \| \\
	&\leq \|\M\|_{1\to 2}^2(2\|\SIGMA\|+\tilde\lambda_k^2).
	\end{align*}
    Thus, we obtain
	\begin{align}
	\label{eqn:XikSquareFunEstimate}
	& \max\left\{ \Big\|\Big(\sum_{k=1}^n \mathbb{E}_{k-1}(\XI_k^T\XI_k) \Big)^{1/2}\Big\|, \Big\|\Big(\sum_{k=1}^n \mathbb{E}_{k-1}(\XI_k\XI_k^T) \Big)^{1/2}\Big\| \right\} \nonumber\\
		& \quad \lesssim \frac{\|\M\|}{n}\left( \sum_{k=1}^n\tilde\lambda_k^2(\tilde\lambda_k^2+\|\SIGMA\|_{\infty})e^{-c_2\tilde\lambda_k^2/\|\SIGMA\|_{\infty}} \right)^{1/2} + \frac{\|\M\|_{1\to 2}}{n} \left( \sum_{k=1}^n \tilde\lambda_k^2(\|\SIGMA\|+\tilde\lambda_k^2) \right)^{1/2}, 	
    \end{align}
    By translating \eqref{eqn:sumLambdaEstimate1} in Lemma \ref{lem:SumBounds} into a moment bound via Lemma \ref{lem:LqtoTailBound}, we find that
    \begin{align*}
        \left\|\left( \sum_{k=1}^n\tilde\lambda_k^2(\tilde\lambda_k^2+\|\SIGMA\|_{\infty})e^{-c_2\tilde\lambda_k^2/\|\SIGMA\|_{\infty}} \right)^{1/2}\right\|_{L^q} \lesssim_{K} q\log(p)\log(n) \|\SIGMA\|_{\infty}.
    \end{align*}
    Moreover, by again translating \eqref{eqn:elemLambdaEstimate} in Lemma \ref{lem:SumBounds} into a moment bound via Lemma \ref{lem:LqtoTailBound}, we further get that
    \begin{align*}
        \left\|\left( \sum_{k=1}^n \tilde\lambda_k^2(\|\SIGMA\|+\tilde\lambda_k^2) \right)^{1/2}\right\|_{L^q} & \leq \left(\sum_{k=1}^n \|\tilde\lambda_k^2\|_{L^{\frac{q}{2}}}\|\SIGMA\|+\|\tilde\lambda_k^4\|_{L^{\frac{q}{2}}} \right)^{1/2}\\
        & \lesssim_{K} \left(\sum_{k=1}^n q\log(p)\log(k)\|\SIGMA\|_{\infty}\|\SIGMA\|+q^2\log^2(p)\log^2(k)\|\SIGMA\|_{\infty}^2\right)^{1/2} \\
        & = \sqrt{n}\left(q\log(p)\log(n)\|\SIGMA\|_{\infty}\|\SIGMA\| +q^2\log^2(p)\log^2(n)\|\SIGMA\|_{\infty}^2\right)^{1/2}\\
        & \lesssim \sqrt{n}q\log(p)\log(n)\|\SIGMA\|_{\infty}^{1/2}\|\SIGMA\|^{1/2}
    \end{align*}
    Thus, taking $L^q$-norms in \eqref{eqn:XikSquareFunEstimate} we find
    	\begin{align}
    	\label{eqn:XikSquareFunEstimateLq}
    	& \max\left\{\left(\mathbb{E}\left\|\left(\sum_{k=1}^n \mathbb{E}_{k-1}(\XI_k^T\XI_k) \right)^{1/2}\right\|^q\right)^{1/q}, \left(\mathbb{E}\left\|\left(\sum_{k=1}^n \mathbb{E}_{k-1}(\XI_k\XI_k^T) \right)^{1/2}\right\|^q\right)^{1/q} \right\} \nonumber \\
    	& \quad \lesssim_K q\log(p)\log(n)\left(\frac{\|\M\|}{n} \|\SIGMA\|_{\infty} + \frac{\|\M\|_{1\to 2}}{\sqrt{n}}\|\SIGMA\|_{\infty}^{1/2}\|\SIGMA\|^{1/2}\right).
    	\end{align}
    By using Theorem~\ref{thm:matrixBRintro} and inserting \eqref{eqn:XikMaxNormEstimateLq} and \eqref{eqn:XikSquareFunEstimateLq} we finally get for any $q \ge \max\{\log(p),\log(n)\}$ that 
    \begin{align*}
    & \left(\mathbb{E}\left\|\M \odot \left( \SIGMA'_n - \sum_{k=1}^n \E_{k-1}( \tfrac{\tilde\lambda_k^2}{n} \Y_k\bar{\Y}_k^T) \right)\right\|^q\right)^{1/q} \\
    & \qquad \lesssim_K q^2\log(p)\log(n)\left(\sqrt{q}\frac{\|\M\|_{1\to 2}}{\sqrt{n}}\|\SIGMA\|_{\infty}^{1/2}\|\SIGMA\|^{1/2}+q\frac{\|\M\|}{n} \|\SIGMA\|_{\infty}\right).
    \end{align*}
    The result now immediately follows by applying Lemma~\ref{lem:LqtoTailBound} to the previous moment bound and inserting the resulting tail estimate with \eqref{eq:SecondTermBoundLoperator} into the right-hand side of \eqref{eqn:splitExpDithered}. 
\end{proof}

\subsection{Proof of Theorem~\ref{thm:matrixBRintro}}
\label{sec:Burkholder}

We will follow an argument from \cite{DiY19}, where Burkholder-Rosenthal inequalities were derived for $L^p$-valued martingale difference sequences. Our starting point is the following $L^q$-version of the matrix Bernstein inequality \cite{tropp2012user}, see \cite[Theorem 6.2]{Dir14}.
\begin{theorem}
	\label{thm:sumsRMs} 
	Let $2\leq q<\infty$. If $({\THETA}_k)_{k=1}^n$ is a sequence of independent, mean-zero random matrices in $\R^{p_1\times p_2}$, then
	\begin{align*}
	\johannes{\Big(\E \Big\|\sum_{k=1}^n {\THETA}_k\Big\|^q \Big)^{1/q}} & \leq C_{q,p}^{\operatorname{(MR)}} \max\Big\{\Big\|\Big(\sum_{k=1}^n \mathbb{E}(\THETA_k^T\THETA_k)\Big)^{1/2}\Big\|, \Big\|\Big(\sum_{k=1}^n \mathbb{E}(\THETA_k\THETA_k^T)\Big)^{1/2}\Big\|,\\
	& \qquad \qquad\qquad\qquad\qquad\qquad \qquad\ \ \ \ \ 2C_{\frac{q}{2},p}^{\operatorname{(MR)}} \johannes{\Big(\E \max_{1\leq k\leq n} \|{\THETA}_k\|^q\Big)^{1/q}\Big\},}
	\end{align*}
	where $p=\max\{p_1,p_2\}$ and
	$C_{q,p}^{\operatorname{(MR)}}\leq  2^{\frac{3}{2}} e (1 + \sqrt{2}) \sqrt{\max\{q,\log p\}}$.  Moreover, the reverse inequality holds with universal constants. 
\end{theorem}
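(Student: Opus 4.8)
The plan is to follow \cite{DiY19}: reduce the matrix martingale to a \emph{conditionally independent} sequence by decoupling, and then apply the $L^q$ matrix Bernstein inequality (Theorem~\ref{thm:sumsRMs}) conditionally. Concretely, I would enlarge the probability space to carry a \emph{decoupled tangent sequence} $(\tilde\XI_k)_{k=1}^n$ of $(\XI_k)_{k=1}^n$: a sequence that is conditionally independent given a $\sigma$-algebra $\cG$ with $\cF_k\subseteq\cG$ for all $k$, and such that $\mathcal L(\tilde\XI_k\mid\cG)=\mathcal L(\tilde\XI_k\mid\cF_{k-1})=\mathcal L(\XI_k\mid\cF_{k-1})$ for every $k$. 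Such a sequence always exists, each $\tilde\XI_k$ is again a matrix martingale difference, and --- this is what will make the \emph{predictable} square functions appear --- the tangency identity gives $\E[\tilde\XI_k^T\tilde\XI_k\mid\cG]=\E_{k-1}(\XI_k^T\XI_k)$, $\E[\tilde\XI_k\tilde\XI_k^T\mid\cG]=\E_{k-1}(\XI_k\XI_k^T)$ and $\E[\|\tilde\XI_k\|^q\mid\cG]=\E_{k-1}\|\XI_k\|^q$.

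For the upper bound \eqref{eqn:matrixBRintroUpper}, I would first pass from the martingale to its decoupled version, $(\E\|\sum_{k=1}^n\XI_k\|^q)^{1/q}\lesssim\beta_{q,p}(\E\|\sum_{k=1}^n\tilde\XI_k\|^q)^{1/q}$. Since the operator norm is not a UMD norm with an absolute constant, I would obtain this by transferring to a Schatten class: for $q\geq\log p$ one has $\|A\|\leq\|A\|_{S^q}\leq e\|A\|$, so it suffices to establish the decoupling inequality for $\|\cdot\|_{S^q}$, where it holds with a constant of order $q$; for $2\leq q<\log p$ one transfers instead to $\|\cdot\|_{S^{\lceil\log p\rceil}}$, producing a cruder ($p$-dependent) constant. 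This is the origin of the bound $\beta_{q,p}\lesssim q$ for $q\geq\log p$. Next, apply Theorem~\ref{thm:sumsRMs} conditionally on $\cG$ to the conditionally independent, conditionally mean-zero sequence $(\tilde\XI_k)$: by tangency its conditional square functions are exactly $\sum_{k=1}^n\E_{k-1}(\XI_k^T\XI_k)$ and $\sum_{k=1}^n\E_{k-1}(\XI_k\XI_k^T)$, while its conditional max term obeys $\E[\max_{k}\|\tilde\XI_k\|^q\mid\cG]\leq\sum_{k=1}^n\E[\|\tilde\XI_k\|^q\mid\cG]=\sum_{k=1}^n\E_{k-1}\|\XI_k\|^q$. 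Taking the overall $L^q$-norm turns the last quantity into $(\sum_{k=1}^n\E\|\XI_k\|^q)^{1/q}\leq n^{1/q}\max_{k}(\E\|\XI_k\|^q)^{1/q}$, and here the hypothesis $q\geq\log n$ absorbs $n^{1/q}$ into an absolute constant. Collecting terms, $\alpha_{q,p}$ is the matrix-Bernstein constant $C^{\operatorname{(MR)}}_{q,p}\lesssim\max\{\sqrt q,\sqrt{\log p}\}$ (with $C^{\operatorname{(MR)}}_{q/2,p}\lesssim\alpha_{q,p}$ accounting for the extra factor $\alpha_{q,p}$ on the max term), and $\beta_{q,p}$ is the decoupling constant.

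For the converse \eqref{eqn:matrixBRintroLower}, the max term needs no decoupling: writing $S_j=\sum_{i\leq j}\XI_i$ one has $\|\XI_k\|\leq\|S_k\|+\|S_{k-1}\|\leq 2\max_{j\leq n}\|S_j\|$, so Doob's $L^q$ maximal inequality for the nonnegative submartingale $(\|S_j\|)_j$ gives $\max_{k}(\E\|\XI_k\|^q)^{1/q}\leq 4(\E\|S_n\|^q)^{1/q}$ for $q\geq 2$. For the two predictable square-function terms I would again route through the decoupled sequence, now using the \emph{reverse} inequality of Theorem~\ref{thm:sumsRMs} conditionally on $\cG$ (which, with absolute constants, bounds the conditional --- hence, by tangency, the predictable --- square functions by $(\E[\|\sum_{k=1}^n\tilde\XI_k\|^q\mid\cG])^{1/q}$), followed by the reverse decoupling $(\E\|\sum_{k=1}^n\tilde\XI_k\|^q)^{1/q}\lesssim\gamma_{q,p}(\E\|\sum_{k=1}^n\XI_k\|^q)^{1/q}$, obtained as before by transferring to $\|\cdot\|_{S^q}$ (constant of order $q$ when $q\geq\log p$), respectively to $\|\cdot\|_{S^{\lceil\log p\rceil}}$. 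Taking $L^q$-norms over $\cG$ at the very end and combining with the Doob bound yields the claim; note that here no lower bound on $q$ beyond $q\geq 2$ is needed, since nothing in the converse converts a sum over $k$ into a maximum.

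The main obstacle is the decoupling step. Unlike the real-valued Burkholder--Rosenthal inequality, where tangent-sequence decoupling is available with absolute constants, for the operator norm one genuinely pays a price, and the clean remedy is to perform the decoupling inside a Schatten-$q$ class --- legitimate because $\|\cdot\|$ and $\|\cdot\|_{S^q}$ differ by at most the factor $e$ once $q\geq\log p$; this is what produces the $q$-dependence in $\beta_{q,p}$ and $\gamma_{q,p}$, and the same device immediately yields the analogous estimate for any mixed norm $(\E\|\sum_{k=1}^n\XI_k\|_{S^r}^q)^{1/q}$. A secondary but necessary point is the bookkeeping of $\sigma$-algebras: one must verify that $\E[\tilde\XI_k^T\tilde\XI_k\mid\cG]$ equals the $\cF_{k-1}$-predictable square function $\E_{k-1}(\XI_k^T\XI_k)$ rather than merely $\E[\XI_k^T\XI_k\mid\cG]$, which is exactly guaranteed by the defining property $\mathcal L(\tilde\XI_k\mid\cG)=\mathcal L(\tilde\XI_k\mid\cF_{k-1})$ of a \emph{decoupled} tangent sequence.
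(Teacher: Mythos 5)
Your proposal does not prove the statement in question. The statement is Theorem~\ref{thm:sumsRMs}, the $L^q$ matrix Rosenthal/Bernstein inequality for sums of \emph{independent}, mean-zero random matrices, with deterministic square functions $\|(\sum_{k}\E(\THETA_k^T\THETA_k))^{1/2}\|$ and the explicit constant $C^{\operatorname{(MR)}}_{q,p}\lesssim\sqrt{\max\{q,\log p\}}$. What you have written is a proof of the \emph{martingale} extension, Theorem~\ref{thm:matrixBRintro}, whose central step is precisely to ``apply the $L^q$ matrix Bernstein inequality (Theorem~\ref{thm:sumsRMs}) conditionally'' to a decoupled tangent sequence. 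As an argument for Theorem~\ref{thm:sumsRMs} itself this is circular: you invoke the very inequality you are asked to establish. It is also structurally mismatched with the target: independent mean-zero summands are already ``decoupled,'' so the tangent-sequence machinery, the UMD/Schatten-class transfer behind $\beta_{q,p}$ and $\gamma_{q,p}$, and the absorption of $n^{1/q}$ under $q\geq\log n$ all address difficulties that simply do not arise in the independent case (and indeed Theorem~\ref{thm:sumsRMs} requires only $q\geq 2$).

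For the record, the paper does not prove Theorem~\ref{thm:sumsRMs} either; it imports it from \cite[Theorem 6.2]{Dir14}. A self-contained proof would run along entirely different lines: symmetrization to reduce to a (conditionally) Rademacher or Gaussian matrix series, the noncommutative Khintchine inequality in the Schatten class $S^{r}$ with $r\simeq\max\{q,\log p\}$ to control the operator norm at the cost of the factor $\sqrt{\max\{q,\log p\}}$, and a further splitting to separate the variance terms from the $L^q$-norm of $\max_{k}\|\THETA_k\|$ (which is where the factor $2C^{\operatorname{(MR)}}_{q/2,p}$ on the max term originates); the reverse inequality follows from Jensen/contraction arguments for the square functions together with a maximal inequality for the max term. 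Your sketch is, in substance, a faithful outline of the paper's proof of Theorem~\ref{thm:matrixBRintro} in Section~\ref{sec:Burkholder} --- decoupled tangent sequences satisfying condition (CI), the identification of the conditional square functions with the predictable ones, and the Schatten-$\log p$ estimate of the UMD/decoupling constants --- but it is aimed at the wrong statement and leaves the actual content of Theorem~\ref{thm:sumsRMs} unestablished.
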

We will derive Theorem~\ref{thm:matrixBRintro} by combining Theorem~\ref{thm:sumsRMs} with a general decoupling technique from \cite{KwW92}. Let
$(\Om,\cF,\bP)$ be a complete probability space, let
$(\cF_i)_{i\geq 0}$ be a filtration, and let $X$ be a Banach space. We denote the Borel $\sigma$-algebra on $X$ by $\mathcal{B}(X)$.
Two $(\cF_i)_{i\geq 1}$-adapted sequences $(\xi_i)_{i\geq 1}$ and
$(\theta_i)_{i\geq 1}$ of $X$-valued random variables are called
\emph{tangent} if for every $i\geq 1$ and $A \in \mathcal{B}(X)$
\begin{equation}
\label{eqn:tangent}
\bP(\xi_i \in A|\cF_{i-1}) = \bP(\theta_i \in A|\cF_{i-1}).
\end{equation}
An $(\cF_i)_{i\geq 1}$-adapted sequence $(\theta_i)_{i\geq 1}$ of $X$-valued random variables is said to satisfy \emph{condition (CI)} if, firstly, there exists a
sub-$\sigma$-algebra
$$
\cG\subset\cF_{\infty}=\sigma(\cup_{i\geq 0}\cF_i)
$$
such that for every $i\geq 1$ and $A \in \mathcal{B}(X)$,
\begin{equation}
\label{eqn:CI}
\bP(\theta_i \in A|\cF_{i-1}) = \bP(\theta_i \in A|\cG)
\end{equation}
and, secondly, $(\theta_i)_{i\geq 1}$ consists of $\cG$-independent random variables, i.e.\ for all $n\geq 1$ and $A_1,\ldots,A_n \in \mathcal{B}(X)$,
\begin{align}
\label{eq:G_independence}
    \E(\mathbf 1_{\theta_1 \in A_1}\cdot\ldots\cdot \mathbf 1_{\theta_n \in A_n}|\cG) = \E(\mathbf 1_{\theta_1 \in A_1}|\cG)\cdot\ldots\cdot\E(\mathbf 1_{\theta_n \in A_n}|\cG).
\end{align}
It is shown in \cite{KwW92} that for every $(\cF_i)_{i\geq
1}$-adapted sequence $(\xi_i)_{i\geq 1}$ there exists an
$(\cF_i)_{i\geq 1}$-adapted sequence $(\theta_i)_{i\geq 1}$ on a
possibly enlarged probability space which is tangent to
$(\xi_i)_{i\geq 1}$ and satisfies condition (CI). This sequence is
called a \emph{decoupled tangent sequence} for $(\xi_i)_{i\geq 1}$
and is unique in law.\par 
To prove Theorem \ref{thm:matrixBRintro}, we will set $(\xi_i)_{i\ge 1} = (\XI_i)_{i\ge 1}$ and apply
Theorem~\ref{thm:sumsRMs} conditionally to its decoupled
tangent sequence $(\theta_i)_{i\geq 1}$. For this approach to work, we
will need to relate various norms of $(\xi_i)_{i\geq 1}$ and
$(\theta_i)_{i\geq 1}$. One of these estimates can be formulated as a
Banach space property. A
Banach space $X$ satisfies \emph{the $q$-decoupling property}
for some $1\leq q<\infty$ if there is a constant $C_{q,X}$ such that for
any complete probability space $(\Om,\cF,\bP)$, any filtration
$(\cF_i)_{i\geq 0}$, and any $(\cF_{i})_{i\geq 1}$-adapted
sequence $(\xi_i)_{i\geq 1}$ in $L^q(\Om,X)$,
\begin{equation}
\label{eqn:decouple}
\Big(\E\Big\|\sum_{i=1}^n \xi_i\Big\|_X^q\Big)^{1/q} \leq C_{q,X} \Big(\E\Big\|\sum_{i=1}^n \theta_i\Big\|_X^q\Big)^{1/q},
\end{equation}
for all $n\geq 1$, where $(\theta_i)_{i\geq 1}$ is any decoupled
tangent sequence of $(\xi_i)_{i\geq 1}$. We will let $C^{\operatorname{(DEC)}}_{q,X}$ denote the best possible constant and call it the \emph{$q$-decoupling constant} of $X$. \blue{The decoupling property is closely related to the well-known UMD property. Recall that a Banach space $X$ is called a UMD space if for some (then all) $1<q<\infty$ there is a constant $C_{q,X}$ such that for any finite martingale difference sequence $(\xi_i)_{i\geq 1}$ in $L^q(\Omega;X)$ and any $\varepsilon_i\in \{-1,1\}$, $i\geq 1$,
\begin{equation}
\label{eqn:UMD}
\Big(\E\Big\|\sum_{i=1}^n \varepsilon_i \xi_i\Big\|_X^q\Big)^{1/q} \leq C_{q,X} \Big(\E\Big\|\sum_{i=1}^n \xi_i\Big\|_X^q\Big)^{1/q}.
\end{equation}
We denote the best constant in \eqref{eqn:UMD} by $C^{\operatorname{(UMD)}}_{q,X}$. If $X$ is a UMD Banach space, then it satisfies the $q$-decoupling property for any $1\leq q<\infty$. Moreover, in this case} one can also \emph{recouple}, meaning that for all
$1<q<\infty$ there is a constant $c_{q,X}$ such that for any
martingale difference sequence $(\xi_i)_{i\geq 1}$ and any
associated decoupled tangent sequence $(\theta_i)_{i\geq 1}$,
\begin{equation}
\label{eqn:recouple}
\Big(\E\Big\|\sum_{i=1}^n \theta_i\Big\|_X^q\Big)^{1/q} \leq c_{q,X} \Big(\E\Big\|\sum_{i=1}^n \xi_i\Big\|_X^q\Big)^{1/q}.
\end{equation}
Let us denote the best possible constant in this inequality by $C^{\operatorname{(REC)}}_{q,X}$ and call it the \emph{$q$-recoupling constant}. Conversely, if both (\ref{eqn:decouple}) and (\ref{eqn:recouple}) hold for some $1<q<\infty$, then $X$ must be a UMD space (in this case \eqref{eqn:decouple} and \eqref{eqn:recouple} hold for all choices of $q$). This equivalence is independently due to McConnell \cite{McC89} and Hitczenko \cite{HitUP}. In fact, it follows from their proof that $\max\{C^{\operatorname{(DEC)}}_{q,X},C^{\operatorname{(REC)}}_{q,X}\}\leq C^{\operatorname{(UMD)}}_{q,X}$. Finally, we will use that for any $1<r\leq q$, $C^{\operatorname{(DEC)}}_{q,X}\lesssim \frac{q}{r}C^{\operatorname{(DEC)}}_{r,X}$ (\cite[Theorem 4.1]{CoV11}) and $C^{\operatorname{(UMD)}}_{q,X}\lesssim \frac{q}{r}C^{\operatorname{(UMD)}}_{r,X}$ (\cite[Theorem 4.23]{HNV16}).\par  
With these tools we can now prove the main result of this section.
\begin{proof}[Proof of Theorem~\ref{thm:matrixBRintro}]
Let $C^{\operatorname{(DEC)}}_{q,p}$ and $C^{\operatorname{(REC)}}_{q,p}$ denote the $q$-decoupling and $q$-recoupling constants of the Banach space of $(\R^{p\times p},\|\cdot\|)$, which is finite dimensional and hence a UMD space. Moreover, we write $\E_{\cG} = \E(\cdot|\cG)$ for brevity. Let $({\THETA}_k)$ be the decoupled tangent sequence for the martingale difference sequence $(\XI_k)$. By the $q$-decoupling inequality,
\begin{align*}
\Big(\E\Big\|\sum_k \XI_k\Big\|^q\Big)^{1/q} & \leq C^{\operatorname{(DEC)}}_{q,p} \Big(\E\Big\|\sum_k {\THETA}_k\Big\|^q\Big)^{1/q}.
\end{align*}
Since the summands ${\THETA}_k$ are $\cG$-conditionally independent by \eqref{eq:G_independence} and $\cG$-mean zero by \eqref{eqn:tangent}, \eqref{eqn:CI}, and the fact that $(\XI_k)$ forms a martingale difference sequence, we can apply Theorem~\ref{thm:sumsRMs} conditionally to find, a.s.,
	\begin{align}
	\label{est:decoupledLqnorm}
	\Big(\E_{\cG}\Big\|\sum_{k=1}^n {\THETA}_k\Big\|^q \Big)^{1/q} & \leq C_{q,p}^{\operatorname{(MR)}} \max\Big\{\Big\|\Big(\sum_{k=1}^n \E_{\cG}(\THETA_k^T\THETA_k)\Big)^{1/2}\Big\|, \Big\|\Big(\sum_{k=1}^n \E_{\cG}(\THETA_k\THETA_k^T)\Big)^{1/2}\Big\|,\nonumber\\
	& \qquad \qquad\qquad\qquad\qquad\qquad \qquad\qquad \ \ \ \ \ 2C_{\frac{q}{2},p}^{\operatorname{(MR)}}\Big(\E_{\cG} \max_{1\leq k\leq n} \|{\THETA}_k\|^q\Big)^{1/q}\Big\}\nonumber\\
	& \leq C_{q,p}^{\operatorname{(MR)}} \max\Big\{\Big\|\Big(\sum_{k=1}^n \mathbb{E}_{k-1}(\XI_k^T\XI_k)\Big)^{1/2}\Big\|, \Big\|\Big(\sum_{k=1}^n \mathbb{E}_{k-1}(\XI_k\XI_k^T)\Big)^{1/2}\Big\|,\nonumber\\
	& \qquad \qquad\qquad\qquad\qquad\qquad \qquad \qquad \ \ \ \ \ 2C_{\frac{q}{2},p}^{\operatorname{(MR)}} \Big( \max_{1\leq k\leq n} \E_{k-1}\|{\XI}_k\|^q\Big)^{1/q}\Big\},
	\end{align}
where in the last step we used \eqref{eq:Lq_maxBound} (together with the assumption $q\geq \log(n)$) and moreover used that by the properties \eqref{eqn:CI} and \eqref{eqn:tangent} of a decoupled tangent sequence, 
$$\E_{\cG} (\THETA_k^T\THETA_k)  = \E_{k-1} (\XI_k^T\XI_k)$$
and 
$$\E_{\cG}\|{\THETA}_k\|^q = \E_{k-1}\|{\XI}_k\|^q.$$
Now take $L^q$-norms on both sides of \eqref{est:decoupledLqnorm}, apply the triangle inequality, and again use \eqref{eq:Lq_maxBound} to obtain the claimed estimate.\par
To estimate $C^{\operatorname{(DEC)}}_{q,p}$, recall that the operator norm on $\R^{p\times p}$ is equivalent to the Schatten $r$-norm for $r:=\log p$ up to absolute constants. Since $q\geq \log(p)\geq 1$, 
$$C^{\operatorname{(DEC)}}_{q,p}\simeq C^{\operatorname{(DEC)}}_{q,S^{\log(p)}} \lesssim \frac{q}{\log(p)}  C^{\operatorname{(DEC)}}_{\log(p),S^{\log(p)}} \leq \frac{q}{\log(p)} C^{\operatorname{(UMD)}}_{\log(p),S^{\log(p)}}$$
Since $C^{\operatorname{(UMD)}}_{r,S^r}\lesssim r$ for any $2\leq r<\infty$ (see \cite[Corollary 4.5]{Ran02}), 
we find $C^{\operatorname{(DEC)}}_{q,p}\lesssim q$ whenever $q\geq \log(p)$. This proves \eqref{eqn:matrixBRintroUpper}.\par
Proceeding analogously using the lower bound in Theorem~\ref{thm:sumsRMs}, we find 
		\begin{align*}
	C^{\operatorname{(REC)}}_{q,p} \Big(\E \Big\|\sum_{k=1}^n {\XI}_k\Big\|^q \Big)^{1/q} & \gtrsim \max\Big\{\Big(\E\Big\|\Big(\sum_{k=1}^n \mathbb{E}_{k-1}(\XI_k^T\XI_k)\Big)^{1/2}\Big\|^q\Big)^{1/q}, \Big(\E\Big\|\Big(\sum_{k=1}^n \mathbb{E}_{k-1}(\XI_k\XI_k^T)\Big)^{1/2}\Big\|^q\Big)^{1/q},\\
	& \qquad \qquad\qquad\qquad\qquad\qquad \qquad\ \ \ \ \ \max_{1\leq k\leq n} (\E\|{\XI}_k\|^q)^{1/q}\Big\}.
	\end{align*}
Since 
$$C^{\operatorname{(REC)}}_{q,p}\simeq C^{\operatorname{(REC)}}_{q,S^{\log(p)}}\leq  C^{\operatorname{(UMD)}}_{q,S^{\log(p)}} \lesssim \frac{q}{\log(p)} C^{\operatorname{(UMD)}}_{\log (p),S^{\log(p)}} \lesssim q,$$
our proof is complete.    
\end{proof}

%

\section*{Acknowledgements}

The authors were supported by the Deutsche Forschungsgemeinschaft (DFG, German Research Foundation) through the project CoCoMIMO funded within the priority program SPP 1798 Compressed Sensing in Information Processing (COSIP).

\appendix 
\section{Proof of Theorem \ref{thm:OperatorDitheredMaskDecoupled}}
\label{sec:ProofAppendix}
\blue{
\newJ{
To show Theorem \ref{thm:OperatorDitheredMaskDecoupled}, we \newS{first} need the following refined version of Lemma \ref{lem:linftyBiasEst}.
\begin{lemma}
	\label{lem:linftyBiasEstDecoupled}
	There exists constant\sjoerd{s $c_1,c_2>0$} depending only on $K$ such that the following holds. Let $\X$ be a mean-zero, $K$-subgaussian vector with covariance matrix $\johannes{\E\round{ \X\X^T }}{} = \SIGMA$. Let \sjoerd{$\LAMBDA \in \R_{>0}^{p\times p}$ be diagonal and let} $\Y = \LAMBDA \, \sign(\X + \LAMBDA\Tau)$ and $\bar{\Y}= \LAMBDA \, \sign(\X + \LAMBDA\bar{\Tau})$, where $\Tau,\bar{\Tau}$ are independent and uniformly distributed in $[-1,1]^p$ \sjoerd{and independent of $\X$}. Then,
	$$ \Big| \Big( \johannes{\E\round{\Y\bar{\Y}^T}}{} - \SIGMA \Big)_{ij} \Big| \sjoerd{\leq c_1} (\Lambda_{ii}\Lambda_{jj} + \Sigma_{ii}^{1/2}\Sigma_{jj}^{1/2}) e^{-c_2 \min\left\{ \frac{\Lambda_{ii}}{\Sigma_{ii}^{1/2}}, \frac{\Lambda_{jj}}{\Sigma_{jj}^{1/2}} \right\}^2}$$
	and 
	$$ \Big| \Big( \johannes{\E\round{\Y\Y^T}}{} - (\SIGMA-\diag(\SIGMA)+\LAMBDA^2) \Big)_{ij} \Big| \sjoerd{\leq c_1} (\Lambda_{ii}\Lambda_{jj} + \Sigma_{ii}^{1/2}\Sigma_{jj}^{1/2}) e^{-c_2 \min\left\{ \frac{\Lambda_{ii}}{\Sigma_{ii}^{1/2}}, \frac{\Lambda_{jj}}{\Sigma_{jj}^{1/2}} \right\}^2}$$
\end{lemma}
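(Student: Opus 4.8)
To prove Lemma~\ref{lem:linftyBiasEstDecoupled}, I would follow the proof of Lemma~\ref{lem:linftyBiasEst} from \cite{dirksen2021covariance}, but carrying the dithering scale and the variance of each coordinate along separately. The key elementary computation is that for $\tau\sim\mathrm{Unif}[-1,1]$ and $c>0$,
\begin{align*}
c\,\E_{\tau}\!\big[\sign(x+c\tau)\big]=\begin{cases} x, & |x|\le c,\\[2pt] c\,\sign(x), & |x|>c,\end{cases}=:f_c(x),
\end{align*}
which satisfies $|f_c(x)-x|\le |x|\,\mathbf{1}_{\{|x|>c\}}$ and $|f_c(x)|\le\min\{|x|,c\}$. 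Because $\Tau$ and $\bar{\Tau}$ are independent of each other and of $\X$, conditioning on $\X$ and integrating out the dithers makes the conditional expectation factor, so that for all $i,j\in[p]$
\begin{align*}
\E[Y_i\bar{Y}_j]=\E_{\X}\!\big[f_{\Lambda_{ii}}(X_i)\,f_{\Lambda_{jj}}(X_j)\big].
\end{align*}
The same identity holds for $\E[Y_iY_j]$ whenever $i\neq j$, since the coordinates $\tau_i,\tau_j$ of the single dither vector $\Tau$ are independent, while $\E[Y_i^2]=\Lambda_{ii}^2$ trivially. Hence the diagonal of $\E[\Y\Y^T]-(\SIGMA-\diag(\SIGMA)+\LAMBDA^2)$ vanishes exactly, and for $i\neq j$ the two asserted estimates both reduce to a bound on $|\E[Y_i\bar{Y}_j]-\Sigma_{ij}|$; it therefore suffices to prove the first one.

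To do so, I would use the add-and-subtract decomposition
\begin{align*}
f_{\Lambda_{ii}}(X_i)f_{\Lambda_{jj}}(X_j)-X_iX_j=\big(f_{\Lambda_{ii}}(X_i)-X_i\big)f_{\Lambda_{jj}}(X_j)+X_i\big(f_{\Lambda_{jj}}(X_j)-X_j\big),
\end{align*}
take expectations, and bound each term with Cauchy--Schwarz. For the first term, using $|f_{\Lambda_{jj}}(X_j)|\le|X_j|$ and $|f_{\Lambda_{ii}}(X_i)-X_i|\le|X_i|\,\mathbf{1}_{\{|X_i|>\Lambda_{ii}\}}$, we obtain the bound $\big\|X_i\mathbf{1}_{\{|X_i|>\Lambda_{ii}\}}\big\|_{L^2}\,\|X_j\|_{L^2}$, and symmetrically for the second. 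It remains to estimate the truncated moment: Cauchy--Schwarz, the moment comparison $\|X_i\|_{L^4}\lesssim\|X_i\|_{\psi_2}$, the subgaussian tail bound $\mathbb{P}(|X_i|>\Lambda_{ii})\le 2\exp(-c\Lambda_{ii}^2/\|X_i\|_{\psi_2}^2)$, and the estimate $\|X_i\|_{\psi_2}=\|\langle\X,\e_i\rangle\|_{\psi_2}\le K(\E X_i^2)^{1/2}=K\Sigma_{ii}^{1/2}$ (cf.\ \eqref{eq:psi2-normComponent}) give
\begin{align*}
\big\|X_i\mathbf{1}_{\{|X_i|>\Lambda_{ii}\}}\big\|_{L^2}=\big(\E\big[X_i^2\,\mathbf{1}_{\{|X_i|>\Lambda_{ii}\}}\big]\big)^{1/2}\lesssim_K\Sigma_{ii}^{1/2}\,e^{-c_2\Lambda_{ii}^2/\Sigma_{ii}},
\end{align*}
with $c_2$ depending only on $K$. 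Combining this with $\|X_j\|_{L^2}=\Sigma_{jj}^{1/2}$, the symmetric estimate for the second term, and the fact that each of the two resulting exponentials is at most $e^{-c_2\min\{\Lambda_{ii}/\Sigma_{ii}^{1/2},\,\Lambda_{jj}/\Sigma_{jj}^{1/2}\}^2}$, we arrive at
\begin{align*}
\big|\E[Y_i\bar{Y}_j]-\Sigma_{ij}\big|
&\lesssim_K\Sigma_{ii}^{1/2}\Sigma_{jj}^{1/2}\,e^{-c_2\min\{\Lambda_{ii}/\Sigma_{ii}^{1/2},\,\Lambda_{jj}/\Sigma_{jj}^{1/2}\}^2}\\
&\le c_1\big(\Lambda_{ii}\Lambda_{jj}+\Sigma_{ii}^{1/2}\Sigma_{jj}^{1/2}\big)\,e^{-c_2\min\{\Lambda_{ii}/\Sigma_{ii}^{1/2},\,\Lambda_{jj}/\Sigma_{jj}^{1/2}\}^2},
\end{align*}
which is the claim (in fact the $\Lambda_{ii}\Lambda_{jj}$ term is not needed and could be dropped).

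I do not anticipate a serious obstacle here: this is essentially a coordinate-wise refinement of \cite[Lemma~17]{dirksen2021covariance}. The two points that need (mild) attention are: (i) checking that the conditional-independence argument still factorizes the expectation when a diagonal matrix, rather than a scalar, scales the dither, and handling the degenerate case $i=j$ of the second bound separately; and (ii) arranging the subgaussian tail estimates so that the exponential decay is governed by the per-coordinate ratio $\Lambda_{ii}/\Sigma_{ii}^{1/2}$ and can be written in terms of the minimum of the two ratios --- for which it is essential to bound $|f_{\Lambda_{jj}}(X_j)|$ by $|X_j|$ (which produces the factor $\Sigma_{jj}^{1/2}$) rather than by $\Lambda_{jj}$ in the first term. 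Both are routine given H\"older's inequality, the $\psi_2$-tail bound, and the estimate $\|X_i\|_{\psi_2}\le K\Sigma_{ii}^{1/2}$ already recorded in the excerpt.
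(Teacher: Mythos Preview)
Your proof is correct and follows essentially the same approach as the paper: the paper packages the scalar computation into Lemma~\ref{lem:biasSignProd} (obtained from \cite[Lemma~16]{dirksen2021covariance} by rescaling) and then applies it with $U=X_i$, $V=X_j$ together with $\|X_\ell\|_{\psi_2}\le K\Sigma_{\ell\ell}^{1/2}$, whereas you unpack the same computation directly via the clipping function $f_c$, the add--and--subtract identity, and Cauchy--Schwarz. Your observation that the $\Lambda_{ii}\Lambda_{jj}$ term is unnecessary is correct and is a mild sharpening; it arises because you bound $|f_{\Lambda_{jj}}(X_j)|$ by $|X_j|$ rather than by $\Lambda_{jj}$, which yields the factor $\Sigma_{jj}^{1/2}$ instead of $\Lambda_{jj}$.
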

}
%
\newS{The proof relies on the following lemma. For $\lambda=\lambda'=1$ it is immediate from (the proof of) \cite[Lemma 16]{dirksen2021covariance}. The general statement follows by rescaling.}  
%
%
\newJ{
\begin{lemma} \label{lem:biasSignProd}
	Let $U,V$ be subgaussian random variables, let $\lambda,\lambda'>0$, and let $\sigma,\sigma'$ be independent, uniformly distributed in $[-1,1]$ and independent of $U$ and $V$. Then, 
	$$\left| \johannes{\E\round{\lambda\sign(U + \lambda\sigma) \cdot \lambda' \sign(V + \lambda'\sigma')} - \E\round{UV}}{} \right|
	\lesssim
	(\lambda \lambda' + \|U\|_{\psi_2} \|V\|_{\psi_2}) e^{-c \min\left\{ \frac{\lambda}{\| U \|_{\psi_2}}, \frac{\lambda'}{\| V \|_{\psi_2}} \right\}^2}.$$
\end{lemma}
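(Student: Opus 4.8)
The plan is to reduce to the case $\lambda=\lambda'=1$, which is the content of (the proof of) \cite[Lemma 16]{dirksen2021covariance}, and then recover the general statement by a rescaling argument that exploits the scale invariance of the sign function: since $\lambda,\lambda'>0$,
\[
\sign(U+\lambda\sigma)=\sign(U/\lambda+\sigma),\qquad \sign(V+\lambda'\sigma')=\sign(V/\lambda'+\sigma').
\]

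First I would introduce $U'=U/\lambda$ and $V'=V/\lambda'$. These are again subgaussian and, by homogeneity of the $\psi_2$-norm, $\|U'\|_{\psi_2}=\|U\|_{\psi_2}/\lambda$ and $\|V'\|_{\psi_2}=\|V\|_{\psi_2}/\lambda'$; the dithers $\sigma,\sigma'$ are left untouched, so they remain independent, uniformly distributed on $[-1,1]$, and independent of $U',V'$. All the expectations that occur are finite, since $|\sign(\cdot)|\le 1$ and $\E|U'V'|\le\|U'\|_{L^2}\|V'\|_{L^2}\lesssim\|U'\|_{\psi_2}\|V'\|_{\psi_2}$. Pulling the scalars out of the expectation and using the displayed identities,
\[
\E\round{\lambda\sign(U+\lambda\sigma)\cdot\lambda'\sign(V+\lambda'\sigma')}-\E\round{UV}
=\lambda\lambda'\Big(\E\round{\sign(U'+\sigma)\sign(V'+\sigma')}-\E\round{U'V'}\Big).
\]

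Next I would apply the $\lambda=\lambda'=1$ estimate to the pair $(U',V')$, obtaining
\[
\Big|\E\round{\sign(U'+\sigma)\sign(V'+\sigma')}-\E\round{U'V'}\Big|
\lesssim\big(1+\|U'\|_{\psi_2}\|V'\|_{\psi_2}\big)\,e^{-c\min\{1/\|U'\|_{\psi_2},\,1/\|V'\|_{\psi_2}\}^2}.
\]
Multiplying through by $\lambda\lambda'$ and substituting $\|U'\|_{\psi_2}=\|U\|_{\psi_2}/\lambda$, $\|V'\|_{\psi_2}=\|V\|_{\psi_2}/\lambda'$ gives $\lambda\lambda'\|U'\|_{\psi_2}\|V'\|_{\psi_2}=\|U\|_{\psi_2}\|V\|_{\psi_2}$ and $\min\{1/\|U'\|_{\psi_2},1/\|V'\|_{\psi_2}\}=\min\{\lambda/\|U\|_{\psi_2},\lambda'/\|V\|_{\psi_2}\}$, which is precisely the claimed bound.

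The computation is elementary once the base case is in place, so the only point requiring care is to confirm that the $\lambda=\lambda'=1$ estimate holds in exactly the form needed here — in particular for a product of two independently dithered signs and with the two subgaussian norms entering the Gaussian tail through a \emph{minimum} — which is what \cite[Lemma 16]{dirksen2021covariance} (or a direct inspection of its proof) provides. The rescaling step itself needs no extra hypothesis: the $\psi_2$-norm is homogeneous, the sign function is invariant under positive rescaling of its argument, and $U$ and $V$ are rescaled independently of each other, so the two dithering scales $\lambda,\lambda'$ decouple automatically.
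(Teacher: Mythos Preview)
Your proposal is correct and follows essentially the same approach as the paper: the paper states that the case $\lambda=\lambda'=1$ is immediate from (the proof of) \cite[Lemma 16]{dirksen2021covariance} and that the general statement follows by rescaling, which is exactly the rescaling argument you carry out in detail.
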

}
\newJ{
\begin{proof}[Proof of Lemma \ref{lem:linftyBiasEstDecoupled}]
	Since $\X$ is $K$-subgaussian, for any $\ell\in[p]$,
	$$\|X_{\ell}\|_{\psi_2} = \|\langle \X,\mathbf{e}_{\ell}\rangle\|_{\psi_2} \leq K \johannes{(\mathbb{E}\langle \X,\mathbf{e}_{\ell}\rangle^2)^{1/2}}{} = K\SIGMA_{\ell\ell}^{1/2}.$$
	Lemma~\ref{lem:biasSignProd} applied for $U=X_i$ and $V=X_j$ yields 
	$$\abs{ \johannes{\E\round{ \Lambda_{ii} \, \sign(X_i + \Lambda_{ii} \tau_i) \Lambda_{jj} \, \sign(X_j + \Lambda_{jj} \bar\tau_j)}}{} - \Sigma_{ij} }{} \lesssim (\Lambda_{ii}\Lambda_{jj}+\sjoerdgreen{K^2} \Sigma_{ii}^{1/2}\Sigma_{jj}^{1/2}) e^{-c_2 \min\left\{ \frac{\Lambda_{ii}}{\Sigma_{ii}^{1/2}}, \frac{\Lambda_{jj}}{\Sigma_{jj}^{1/2}} \right\}^2}, $$ 
	for all $i,j\in [p]$ and  
	$$\abs{ \johannes{\E\round{ \Lambda_{ii} \, \sign(X_i + \Lambda_{ii} \tau_i) \Lambda_{jj} \,\sign(X_j + \Lambda_{jj}\tau_j)}}{} - \Sigma_{ij} }{} \lesssim (\Lambda_{ii}\Lambda_{jj}+\sjoerdgreen{K^2} \Sigma_{ii}^{1/2}\Sigma_{jj}^{1/2}) e^{-c_2 \min\left\{ \frac{\Lambda_{ii}}{\Sigma_{ii}^{1/2}}, \frac{\Lambda_{jj}}{\Sigma_{jj}^{1/2}} \right\}^2}$$
	whenever $i\neq j$. These two observations immediately imply the two statements. 
\end{proof}
}
\newJ{
Second, we have to control the various adaptive dithering scales, i.e., the entries of $\LAmbda_k$ defined in \eqref{eq:lambdaAdaptiveEntrywise}.
\begin{lemma}
	\label{lem:linfConcentrationDecoupled}
	There exists an absolute constants $c>0$ and such that the following holds. Let $\X \in \R^p$ be a $K$-subgaussian vector with $\E \X = \0$ and $\E(\X\X^\top) = \SIGMA \in \R^{p\times p}$. Let $\X_0,\dots,\X_n \overset{i.i.d.}{\sim} \X$ and let $\LAmbda_k$ be defined as in \eqref{eq:lambdaAdaptiveEntrywise}. Then, for $t > 0$ and any $i \in [p]$, $k \in [n]$,
	\begin{align}
		\label{eqn:lambdakSubgaussianEntrywise}
		\P{ | (\lambda_k)_i - \Sigma_{ii}^{1/2}| > t^{1/2} K^{1/2} \Sigma_{ii}^{1/2}}
		\le 2 e^{-ck\min\{t,t^2\}}.
	\end{align}
\end{lemma}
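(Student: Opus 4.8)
The statement is the entry-wise counterpart of the lower bound in Lemma~\ref{lem:linfConcentration}, and I would prove it in the same spirit, the only essential difference being that the summands here are \emph{squares} and hence merely sub-exponential rather than sub-gaussian, so Bernstein's inequality replaces Hoeffding's. The key point is that $(\lambda_k)_i^2=\tfrac1k\sum_{j=0}^{k-1}(X_j)_i^2$ is an empirical average of $k$ i.i.d.\ sub-exponential random variables with mean $\Sigma_{ii}$; the plan is to concentrate this average around $\Sigma_{ii}$ and then pass from the square back to $(\lambda_k)_i$ by an elementary estimate.

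In detail, I would first record that, since $\X$ is $K$-subgaussian, $\|(X_j)_i\|_{\psi_2}\le K\Sigma_{ii}^{1/2}$, exactly as in the chain of inequalities \eqref{eq:psi2-normComponent}; hence $(X_j)_i^2$ is sub-exponential with $\|(X_j)_i^2\|_{\psi_1}=\|(X_j)_i\|_{\psi_2}^2\le K^2\Sigma_{ii}$, and since $\E(X_j)_i^2=\Sigma_{ii}$ and $\Sigma_{ii}\le K^2\Sigma_{ii}$ (note $K\ge 1$ for any sub-gaussian vector, as the $\psi_2$-norm always dominates the $L^2$-norm), the centred variables $Z_j:=(X_j)_i^2-\Sigma_{ii}$ obey $\|Z_j\|_{\psi_1}\lesssim K^2\Sigma_{ii}$. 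The $Z_j$ being i.i.d.\ and mean zero, Bernstein's inequality (see, e.g., \cite[Section~2.8]{vershynin2018high}) yields, for all $s>0$,
\[
\P{\,\big|(\lambda_k)_i^2-\Sigma_{ii}\big|>s\,}\le 2\exp\!\Big(-c\,k\min\Big\{\tfrac{s^2}{K^4\Sigma_{ii}^2},\,\tfrac{s}{K^2\Sigma_{ii}}\Big\}\Big).
\]
The second step is to transfer this to $(\lambda_k)_i$ via the elementary inclusions valid for $a,b\ge 0$ and $\delta>0$: $\{a>b+\delta\}\subseteq\{a^2-b^2>2b\delta+\delta^2\}$ and, when $\delta<b$, $\{a<b-\delta\}\subseteq\{b^2-a^2>b\delta\}$ (the latter event being empty for $\delta\ge b$). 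Applying these with $a=(\lambda_k)_i$, $b=\Sigma_{ii}^{1/2}$ and the deviation $\delta=t^{1/2}K^{1/2}\Sigma_{ii}^{1/2}$ prescribed in the statement, the event $\{|(\lambda_k)_i-\Sigma_{ii}^{1/2}|>\delta\}$ is contained in $\{|(\lambda_k)_i^2-\Sigma_{ii}|>s\}$ for an $s$ of the order of $\max\{b\delta,\delta^2\}$; substituting this $s$ into the Bernstein bound and simplifying the minimum over the two Bernstein regimes (equivalently: distinguishing $\delta\le b$ from $\delta>b$, i.e.\ $t$ small from $t$ large) delivers the asserted bound $2e^{-ck\min\{t,t^2\}}$ after adjusting the absolute constant $c$.

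No union bound is needed, since $i$ and $k$ are fixed. The only step requiring genuine care — and the one I would double-check — is this last simplification: one has to verify that, with the prescribed scaling of $\delta$, the quadratic Bernstein regime governs the range of small $t$ (where $\min\{t,t^2\}=t^2$, matching the sub-gaussian-type fluctuations of an empirical mean) while the linear regime governs large $t$ (where $\min\{t,t^2\}=t$, reflecting the sub-exponential tail of a single square), and that the stray powers of $K$ are absorbed correctly — this is precisely where $K\ge 1$ and the explicit $K$-dependence of the deviation come in. Everything else is a routine appeal to standard sub-exponential concentration.
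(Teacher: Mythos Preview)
Your proposal is correct and follows essentially the same approach as the paper: both identify $(\lambda_k)_i^2$ as an empirical mean of i.i.d.\ sub-exponential variables with $\psi_1$-norm $\lesssim K^2\Sigma_{ii}$, apply Bernstein's inequality to concentrate it around $\Sigma_{ii}$, and then pass back to $(\lambda_k)_i$. The only difference is in this last step: where you carry out a case analysis on the inclusions $\{a>b+\delta\}\subseteq\{a^2-b^2>2b\delta+\delta^2\}$ and $\{a<b-\delta\}\subseteq\{b^2-a^2>b\delta\}$, the paper uses the one-line inequality $|\sqrt{a}-\sqrt{b}|\le\sqrt{|a-b|}$ (valid for $a,b\ge 0$), which immediately gives $\P{|(\lambda_k)_i-\Sigma_{ii}^{1/2}|>\delta}\le\P{|(\lambda_k)_i^2-\Sigma_{ii}|>\delta^2}$ and lands directly on the deviation level $s=tK\Sigma_{ii}$ without any case distinction.
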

\begin{proof}
	Fix $i \in [p]$. Let $X_{\newS{ki}}$ and $\lambda_{\newS{ki}}$ \newS{denote} the $i$-th entry of $\X_k$ and $\LAmbda_k$, respectively. Since
	\begin{align*}
		\| X_{\newS{ki}}^2 \|_{\psi_1} = \| X_{\newS{ki}} \|_{\psi_2}^2 = \| \langle \X_{\newS{ki}}, \e_i \rangle \|_{\psi_2}^2 \le K^2  \E \langle \X_{\newS{k}},\e_i \rangle^2 = K^2 \Sigma_{ii}
	\end{align*}
\newS{and} $\E X_{\newS{ji}}^2 = \Sigma_{ii}$ \newS{for all $j\in[n]$},
   Bernstein's inequality (\newS{see}, e.g., \cite[Theorem 2.8.1]{vershynin2018high}) yields
	\begin{align*}
		\P{ |\lambda_{\newS{ki}}^2 - \Sigma_{ii}| > t K  \Sigma_{ii}}
		= \P{ \left| \sum_{j=0}^{k-1} (X_{\newS{ji}}^2 - \E X_{\newS{ji}}^2) \right| > k t K \Sigma_{ii} }
		\le 2 e^{-c \min\left\{ \frac{t^2 k^2 K^2 \Sigma_{ii}^2}{k K^2 \Sigma_{ii}^2}, \frac{t k K \Sigma_{ii}}{K \Sigma_{ii}} \right\}}
		= 2 e^{-ck \min\{ t^2,t \}},
	\end{align*}
     where $c > 0$ is an absolute constant.
     Since $\sqrt{a}-\sqrt{b} \le \sqrt{a-b}$, for $0 \le b \le a$, this yields the claim via
     \begin{align*}
     	\P{ | \lambda_{\newS{ki}} - \Sigma_{ii}^{1/2}| > t^{1/2} K^{1/2} \Sigma_{ii}^{1/2} }
     	\le \P{ | \lambda_{\newS{ki}}^2 - \Sigma_{ii}|^{1/2} >  t^{1/2} K^{1/2} \Sigma_{ii}^{1/2} }.
     \end{align*}
\end{proof}
}
\newJ{
\newS{Using} Lemmas \ref{lem:linftyBiasEstDecoupled} and \ref{lem:linfConcentrationDecoupled}, we can now derive the required bias control.
\begin{lemma}
	\label{lem:biasControlUnifiedDecoupled}
	There exists an absolute constant $c$ such that the following holds.
	Let $\X \in \mathbb R^p$ be a mean-zero, $K$-subgaussian vector with covariance matrix \sjoerd{$\SIGMA \in \R^{p\times p}$}.
	Let $\X_0,...,\X_n \overset{\mathrm{i.i.d.}}{\sim} \X$ and let $\LAMBDA_k$ be a diagonal matrix with $\diag(\LAMBDA_k) = \sqrt{\frac{4}{c_2} \log(k^2)} \LAmbda_k$, where $\LAmbda_k$ is defined in \eqref{eq:lambdaAdaptiveEntrywise} and $c_2$ is the \newS{constant} from Lemma \ref{lem:linftyBiasEstDecoupled}. 
	Then, for any $\theta\geq \max\{c, 4\log(n),4\log(p)\}$, we have with probability at least $1-4e^{-\theta}$
	$$\Big\| \sum_{k=1}^n \E_{k-1}( \tfrac{1}{n} \Y_k\bar{\Y}_k^T) - \SIGMA \Big\| \lesssim_{K} \theta^2 \log(p)\log(n) \frac{\tr(\SIGMA)}{n},$$
	where we abbreviate the scaled quantized samples appearing in \eqref{eq:AsymmetricEstimatorAdaptiveEntrywise} by $\Y_k = \LAMBDA_k \sign(\X_k + \LAMBDA_k \Tau_k)$ and $\bar{\Y}_k = \LAMBDA_k \sign(\X_k + \LAMBDA_k \bar{\Tau}_k)$.
\end{lemma}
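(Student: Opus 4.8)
The plan is to follow the template of the proof of Lemma~\ref{lem:biasControlUnified}, the new feature being that we track the bias \emph{entry by entry} and then convert the entrywise estimate into an operator-norm estimate using the elementary fact that if $A\in\R^{p\times p}$ satisfies $|A_{ij}|\le b_ib_j$ for some nonnegative $b\in\R^p$, then $\|A\|\le\|\,|A|\,\|\le\|bb^T\|=\|b\|_2^2$. Used with $b_i=\Sigma_{ii}^{1/2}$, this produces the factor $\|b\|_2^2=\tr(\SIGMA)$ in place of the $p\|\SIGMA\|_\infty$ that appears in Lemma~\ref{lem:biasControlUnified}, which is exactly the effective-rank improvement we are after. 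Concretely, since $\LAMBDA_k$ is $\mathcal F_{k-1}$-measurable while $(\X_k,\Tau_k,\bar{\Tau}_k)$ is independent of $\mathcal F_{k-1}$, Lemma~\ref{lem:linftyBiasEstDecoupled} applies conditionally on $\mathcal F_{k-1}$ and gives, for all $i,j$,
\[
\bigl|(\E_{k-1}(\Y_k\bar{\Y}_k^T)-\SIGMA)_{ij}\bigr|\lesssim_K\bigl((\Lambda_k)_{ii}(\Lambda_k)_{jj}+\Sigma_{ii}^{1/2}\Sigma_{jj}^{1/2}\bigr)\exp\!\Bigl(-c_2\min\bigl\{(\Lambda_k)_{ii}/\Sigma_{ii}^{1/2},(\Lambda_k)_{jj}/\Sigma_{jj}^{1/2}\bigr\}^2\Bigr),
\]
and by the triangle inequality it suffices to bound $\tfrac1n\sum_{k=1}^n\|\E_{k-1}(\Y_k\bar{\Y}_k^T)-\SIGMA\|$ on a high-probability event.

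I would build the good event from Lemma~\ref{lem:linfConcentrationDecoupled}. Choosing its deviation parameter of order $\theta$ and union bounding over $i\in[p]$, $k\in[n]$ (the geometric decay in $k$ keeps the sum comparable to a single term, and the union bound over $np$ events is absorbed using $\theta\gtrsim\log(np)$, which follows from the hypothesis) gives, with probability $\ge1-2e^{-\theta}$, the upper bound $(\lambda_k)_i\lesssim_K\sqrt\theta\,\Sigma_{ii}^{1/2}$ for all $i,k$; and, for a threshold $k_\theta\simeq_K\theta$, the lower bound $(\lambda_k)_i\ge\tfrac12\Sigma_{ii}^{1/2}$ for all $i$ and all $k_\theta<k\le n$ (vacuous if $k_\theta\ge n$). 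Recall $\diag(\LAMBDA_k)=\sqrt{\tfrac4{c_2}\log(k^2)}\,\LAmbda_k$, with $c_2$ the constant of Lemma~\ref{lem:linftyBiasEstDecoupled}.

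On this event I would split the sum at $k_\theta$. For $k_\theta<k\le n$ the lower bound forces $(\Lambda_k)_{ii}/\Sigma_{ii}^{1/2}\ge\tfrac12\sqrt{\tfrac4{c_2}\log(k^2)}$, hence the exponential factor is $\le e^{-2\log k}=k^{-2}$, while the upper bound gives $(\Lambda_k)_{ii}\lesssim_K\sqrt{\theta\log k}\,\Sigma_{ii}^{1/2}$; feeding both into the entrywise estimate and applying the rank-one domination with $b_i=\Sigma_{ii}^{1/2}$ yields $\|\E_{k-1}(\Y_k\bar{\Y}_k^T)-\SIGMA\|\lesssim_K\theta\log(k)\,k^{-2}\tr(\SIGMA)$, and since $\sum_{k\ge2}\log(k)k^{-2}\lesssim1$ this range contributes $\lesssim_K\theta\,\tr(\SIGMA)$ to $\sum_k\|\cdot\|$. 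For $1\le k\le k_\theta$ I would use only the crude bound $|(\E_{k-1}(\Y_k\bar{\Y}_k^T))_{ij}|\le(\Lambda_k)_{ii}(\Lambda_k)_{jj}$, which by the same domination gives $\|\E_{k-1}(\Y_k\bar{\Y}_k^T)\|\le\sum_i(\Lambda_k)_{ii}^2\lesssim_K\theta\log(n)\tr(\SIGMA)$ on the good event (using $k\le k_\theta\le n$), together with $\|\SIGMA\|\le\tr(\SIGMA)$; summing over the $k_\theta\lesssim_K\theta$ such $k$ contributes $\lesssim_K\theta^2\log(n)\tr(\SIGMA)$. Adding the two ranges, dividing by $n$, and using $\log(p)\ge1$ gives the claimed bound on an event of probability $\ge1-4e^{-\theta}$.

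The argument is essentially bookkeeping once the rank-one domination is in place; I expect the delicate point to be the calibration of $k_\theta$, which must be large enough ($\simeq_K\theta$) that the geometric-in-$k$ tails of Lemma~\ref{lem:linfConcentrationDecoupled}, after the union bound over $i\in[p]$ and $k_\theta<k\le n$, sum to at most $e^{-\theta}$ --- this is where the hypothesis $\theta\ge\max\{c,4\log n,4\log p\}$ is used --- yet small enough that the crude small-$k$ range only costs the extra factor $\theta\log(n)$. The degenerate case $k_\theta\ge n$ (i.e.\ $\theta\gtrsim_K n$) should be handled separately: there every summand is $\lesssim_K\theta\log(n)\tr(\SIGMA)$, and dividing by $n$ already yields a bound of the required shape. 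One should also double-check that the constant in the prescribed dithering scale is exactly the $c_2$ of Lemma~\ref{lem:linftyBiasEstDecoupled}, so that the Gaussian-type factor drops below $k^{-2}$ precisely for $k>k_\theta$.
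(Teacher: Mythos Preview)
Your proposal is correct and follows essentially the same approach as the paper: apply Lemma~\ref{lem:linftyBiasEstDecoupled} conditionally to get an entrywise bias bound of the form $C_K\,\Sigma_{ii}^{1/2}\Sigma_{jj}^{1/2}$ (times the exponential factor), build the good event from Lemma~\ref{lem:linfConcentrationDecoupled} via a union bound over $i\in[p]$ and $k$, split the sum over $k$ at $k_\theta\simeq_K\theta$, and convert the rank-one entrywise domination $|A_{ij}|\le b_ib_j$ into $\|A\|\le\|b\|_2^2=\tr(\SIGMA)$. The paper packages the split-at-$k_\theta$ step as a separate Lemma~\ref{lem:SumBoundsDecoupled} (which it reuses in the proof of Theorem~\ref{thm:OperatorDitheredMaskDecoupled}) and applies the $\sup_{\v,\w}$ bilinear-form argument to the full sum before splitting, whereas you apply the triangle inequality in operator norm first and then split; the arithmetic is identical, and your bookkeeping is in fact slightly tighter (you absorb the $\log(p)$ from the union bound into $\theta$ via the hypothesis $\theta\ge 4\log p$, rather than carrying it as a separate factor).
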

As before, we extract the following technical observation from the proof of Lemma \ref{lem:biasControlUnifiedDecoupled} since we are going to re-use it later.
\begin{lemma}
	\label{lem:SumBoundsDecoupled}
	There is an absolute constant $c\ge1$ such that the following holds. Let $p > 2$ and let $\X \in \R^p$ be a $K$-subgaussian vector with $\E \X = \0$ and $\E(\X\X^\top) = \SIGMA \in \R^{p\times p}$. Let $\X_1,\dots,\X_n \overset{i.i.d.}{\sim} \X$ and let $\LAmbda_k$ be defined as in \eqref{eq:lambdaAdaptiveEntrywise}.
	Let $\LAMBDA_k$ be a diagonal matrix with $\diag(\LAMBDA_k) = \sqrt{\frac{4}{ c_2} \log(k^2)} \LAmbda_k$ for $k\geq 1$, where $c_2$ is the \newS{constant} from \newS{Lemma} \ref{lem:linftyBiasEstDecoupled}. 
 Then, for $\theta \ge c$ we have with probability at least $1-2e^{-\theta^2}$
	\begin{equation}
		\label{eqn:elemLambdaEstimateDecoupled}
		(\Lambda_k)_{ii} \lesssim \sqrt{ \theta K \Sigma_{ii} \log(p) \log(k))}, \qquad \text{for all $k \ge 1$ and $i \in [p]$,}
	\end{equation}
 \newS{and in particular
\begin{equation}
\label{eqn:lambdaTraceEst}
\tr(\LAMBDA_k^2)\lesssim \theta K\log(p) \log(k))\tr(\SIGMA) , \qquad \text{for all $k \ge 1$}.
\end{equation}
}	Moreover, for any $\theta\geq \max\{c, 4\log(n), 4\log(p)\}$, we have with probability at least $1-4e^{-\theta}$ that for any $(i,j) \in [p]^2$
	\begin{equation}
		\label{eqn:sumLambdaEstimate1Decoupled}
		\sum_{k=1}^n ((\Lambda_k)_{ii}(\Lambda_k)_{jj} + \Sigma_{ii}^{1/2}\Sigma_{jj}^{1/2}) e^{-c_2 \min\left\{ \frac{(\Lambda_k)_{ii}}{\Sigma_{ii}^{1/2}}, \frac{(\Lambda_k)_{jj}}{\Sigma_{jj}^{1/2}} \right\}^2}
		\lesssim_K \theta^2 \log(p)\log(n) \Sigma_{ii}^{1/2}\Sigma_{jj}^{1/2}.
	\end{equation}
    and 
    \begin{align}
    \label{eqn:sumLambdaEstimate2Decoupled}
    	 \left( \sum_{k=1}^n \frac{2 \tr(\LAMBDA_k^2) }{n^2}
    	\big( \tr(\LAMBDA_k^2) + \tr(\SIGMA) \big) e^{-c_2 \min\left\{ \frac{\Lambda_{ii}}{\Sigma_{ii}^{1/2}} \colon i \in [p] \right\}^2}  \right)^{1/2} 
    	\lesssim_K \frac{\theta^2}{n} \log(p) \log(n) \tr(\SIGMA).
    \end{align}
\end{lemma}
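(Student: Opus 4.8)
The plan is to mirror the proof of Lemma~\ref{lem:SumBounds}, with Lemma~\ref{lem:linfConcentrationDecoupled} replacing Lemma~\ref{lem:linfConcentration} and a threshold $k_\theta$ chosen accordingly. Recall that $(\Lambda_k)_{ii}=\sqrt{\tfrac{4}{c_2}\log(k^2)}\,(\lambda_k)_i$, so all four displays are trivial for $k=1$ and we may take $k\ge 2$ throughout. First I would prove \eqref{eqn:elemLambdaEstimateDecoupled}: applying the upper-tail half of \eqref{eqn:lambdakSubgaussianEntrywise} with a deviation level $t$ chosen so that $ck\min\{t,t^2\}$ dominates $\theta^2+\log p$ uniformly in $k$, and union bounding over $k\ge 1$ (the tail decays geometrically in $k$) and over $i\in[p]$ (a factor $p$, i.e.\ an additive $\log p$ in the exponent), yields on an event $\mathcal{E}_1$ of probability at least $1-2e^{-\theta^2}$ that $(\lambda_k)_i\lesssim_K\sqrt{\theta\,\Sigma_{ii}\log p}$ for all $k,i$; multiplying by $\sqrt{(4/c_2)\log(k^2)}$ and using $c_2=c_2(K)$ gives \eqref{eqn:elemLambdaEstimateDecoupled}. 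Summing the squares of \eqref{eqn:elemLambdaEstimateDecoupled} over $i\in[p]$, with $\tr(\LAMBDA_k^2)=\sum_i(\Lambda_k)_{ii}^2$ and $\tr(\SIGMA)=\sum_i\Sigma_{ii}$, then gives \eqref{eqn:lambdaTraceEst} on $\mathcal{E}_1$.

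Next I would establish a matching lower bound in order to control the exponential factors. Fix $t_0=t_0(K)>0$ small enough that $1-t_0^{1/2}K^{1/2}\ge\tfrac12$ and set $k_\theta=\min\{\lceil C_K\theta\rceil,n\}$ with $C_K=C_K(K)$ large. Applying the lower-tail half of \eqref{eqn:lambdakSubgaussianEntrywise} with $t=t_0$ and union bounding over $i\in[p]$ and $k_\theta<k\le n$ --- where, exactly as in the proof of \eqref{eqn:sumLambdaEstimate1}, the hypotheses $\theta\ge4\log n$ and $\theta\ge4\log p$ let one absorb the $\le pn$ events into a single factor $e^{-\theta}$ once $C_K$ is large enough --- produces an event $\mathcal{E}_2$ of probability at least $1-2e^{-\theta}$ on which $(\lambda_k)_i\ge\tfrac12\Sigma_{ii}^{1/2}$ for all $i\in[p]$ and $k_\theta<k\le n$. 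On $\mathcal{E}_2$ this forces $(\Lambda_k)_{ii}/\Sigma_{ii}^{1/2}\ge\tfrac12\sqrt{(4/c_2)\log(k^2)}$ for every such $k,i$, hence $c_2\min\{(\Lambda_k)_{ii}/\Sigma_{ii}^{1/2},(\Lambda_k)_{jj}/\Sigma_{jj}^{1/2}\}^2\ge 2\log k$ and likewise $c_2(\min_i (\Lambda_k)_{ii}/\Sigma_{ii}^{1/2})^2\ge 2\log k$; in other words every exponential factor appearing in \eqref{eqn:sumLambdaEstimate1Decoupled} and \eqref{eqn:sumLambdaEstimate2Decoupled} is $\le k^{-2}$ once $k>k_\theta$.

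Then I would work on $\mathcal{E}_1\cap\mathcal{E}_2$ (of probability at least $1-4e^{-\theta}$, using $e^{-\theta^2}\le e^{-\theta}$) and split each of the two sums at $k_\theta$. For $k\le k_\theta$ I bound the exponential factors by $1$ and insert \eqref{eqn:elemLambdaEstimateDecoupled}, resp.\ \eqref{eqn:lambdaTraceEst}; since there are $\le k_\theta\lesssim_K\theta$ such terms and $\log k_\theta\le\log n$, this block contributes $\lesssim_K\theta^2\log p\log n\,\Sigma_{ii}^{1/2}\Sigma_{jj}^{1/2}$ to \eqref{eqn:sumLambdaEstimate1Decoupled} and $\lesssim_K n^{-2}\theta^3\log^2 p\log^2 n\,\tr(\SIGMA)^2$ to the sum inside \eqref{eqn:sumLambdaEstimate2Decoupled}. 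For $k>k_\theta$ I use the bound $k^{-2}$ on the exponential factor together with \eqref{eqn:elemLambdaEstimateDecoupled}/\eqref{eqn:lambdaTraceEst} and the convergence of $\sum_{k\ge1}\log^2(k)k^{-2}$; this produces strictly lower-order-in-$\theta$ contributions. The degenerate case $k_\theta=n$ forces $n\lesssim_K\theta$ and is handled by bounding the full sums directly via \eqref{eqn:elemLambdaEstimateDecoupled}/\eqref{eqn:lambdaTraceEst}, which still respects the claimed rates. Taking square roots yields \eqref{eqn:sumLambdaEstimate2Decoupled}, and a union bound over $(i,j)\in[p]^2$ --- absorbed by the $\log p$ already present in the probability budget --- yields \eqref{eqn:sumLambdaEstimate1Decoupled}.

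The main obstacle is the bookkeeping of the union bounds needed to hit the stated failure probabilities ($2e^{-\theta^2}$ in the first step and $4e^{-\theta}$ overall): Lemma~\ref{lem:linfConcentrationDecoupled} provides a Bernstein-type tail with the mixed exponent $k\min\{t,t^2\}$ rather than a clean subgaussian one, so the deviation level in the first step must be chosen --- possibly depending on $k$ --- with care, and the threshold $k_\theta$ must be kept small enough ($\lesssim_K\theta$) that the $k\le k_\theta$ block of each sum does not overshoot the claimed $\theta^2$-rate, yet large enough that the lower-tail union bound over the $\le pn$ events closes. Everything else --- the geometric and polynomial-in-$k$ summations --- is routine.
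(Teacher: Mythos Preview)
Your proposal is correct and follows essentially the same route as the paper's proof: the same upper-tail union bound from Lemma~\ref{lem:linfConcentrationDecoupled} for \eqref{eqn:elemLambdaEstimateDecoupled}, the same choice of threshold $k_\theta\lesssim_K\theta$ via the lower-tail half of \eqref{eqn:lambdakSubgaussianEntrywise}, and the same split of the sums in \eqref{eqn:sumLambdaEstimate1Decoupled} and \eqref{eqn:sumLambdaEstimate2Decoupled} at $k_\theta$. One small simplification: the final union bound over $(i,j)\in[p]^2$ that you mention is unnecessary, since the events $\mathcal{E}_1$ and $\mathcal{E}_2$ already hold uniformly over $i\in[p]$; on $\mathcal{E}_1\cap\mathcal{E}_2$ the estimate \eqref{eqn:sumLambdaEstimate1Decoupled} is automatically valid for all pairs $(i,j)$ simultaneously.
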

}
\begin{proof}
	\newJ{
	Assuming that $\theta$ is sufficiently large to guarantee $e^{-c \min\{\theta,\theta^2\}} \le \frac{1}{2}$, for $c>0$ being the constant from Lemma~\ref{lem:linfConcentrationDecoupled}, one can combine \eqref{eqn:lambdakSubgaussianEntrywise} in Lemma \ref{lem:linfConcentrationDecoupled} with a union bound over all $k\geq 1$ and all $i \in [p]$, to obtain with probability at least $1-2e^{-c\theta^2}$ 
	\begin{align} \label{eq:LambdaUBDecoupled}
		(\lambda_k)_i \lesssim \sqrt{ (\theta + \log(p)) K \Sigma_{ii}} \le \sqrt{ \theta K \Sigma_{ii} \log(p)},
	\end{align}
	for all $k \ge 1$ and all $i \in [p]$, where we used in the last step that $\theta, \log(p) \ge 1$ by assumption.
	This proves \eqref{eqn:elemLambdaEstimateDecoupled} and \eqref{eqn:lambdaTraceEst}.
}
\newJ{
	Define now $k_\theta = \min\{\lceil c' \theta K^2 \rceil,n\}$, where $c'>0$ only depends on the constant $c$ from Lemma \ref{lem:linfConcentrationDecoupled}. If $k_\theta < n$, we find by Lemma \ref{lem:linfConcentrationDecoupled} and a union bound over all $k_{\theta} < k \leq n$ and all $i \in [p]$ with probability at least 
	\begin{align*}
		2 \sum_{k = k_\theta + 1}^n p e^{ - \frac{ck}{16 K^2 } } 
		\le 2 n p e^{-\theta} 
		\le 2 e^{-\frac{1}{2} \theta}
	\end{align*}
	that 
	\begin{align}
		\label{eq:LambdaKLBDecoupled}
		(\lambda_k)_i \ge \frac{1}{2} \Sigma_{ii}^{1/2}, \qquad \text{for any $k_\theta < k \le n$ and $i \in [p]$}.
	\end{align}
	Note that we used in the union bound that $\theta - \log(n) - \log(p) \ge \frac{1}{2} \theta$ by assumption.
}
\newJ{
\newS{Let us now condition on these events. For any $(i,j) \in [p]^2$ we make the split}
	\begin{align*}
		&\sum_{k=1}^n ((\Lambda_k)_{ii}(\Lambda_k)_{jj} + \Sigma_{ii}^{1/2}\Sigma_{jj}^{1/2}) e^{-c_2 \min\left\{ \frac{(\Lambda_k)_{ii}}{\Sigma_{ii}^{1/2}}, \frac{(\Lambda_k)_{jj}}{\Sigma_{jj}^{1/2}} \right\}^2} \\
		& \qquad = \sum_{\substack{1 \le k \le k_\theta}} ((\Lambda_k)_{ii}(\Lambda_k)_{jj} + \Sigma_{ii}^{1/2}\Sigma_{jj}^{1/2}) e^{-c_2 \min\left\{ \frac{(\Lambda_k)_{ii}}{\Sigma_{ii}^{1/2}}, \frac{(\Lambda_k)_{jj}}{\Sigma_{jj}^{1/2}} \right\}^2} \\
		& \qquad\quad + \sum_{k=k_{\theta}+1}^n ((\Lambda_k)_{ii}(\Lambda_k)_{jj} + \Sigma_{ii}^{1/2}\Sigma_{jj}^{1/2}) e^{-c_2 \min\left\{ \frac{(\Lambda_k)_{ii}}{\Sigma_{ii}^{1/2}}, \frac{(\Lambda_k)_{jj}}{\Sigma_{jj}^{1/2}} \right\}^2},
	\end{align*}
	where we recall that $(\Lambda_k)_{ii} = \sqrt{\frac{4}{c_2} \log(k^2)} (\lambda_k)_i$.
	Observe that by \eqref{eqn:elemLambdaEstimateDecoupled}
	\begin{align*}
		\sum_{\substack{1 \le k \le k_\theta}} ((\Lambda_k)_{ii}(\Lambda_k)_{jj} + \Sigma_{ii}^{1/2}\Sigma_{jj}^{1/2}) e^{-c_2 \min\left\{ \frac{(\Lambda_k)_{ii}}{\Sigma_{ii}^{1/2}}, \frac{(\Lambda_k)_{jj}}{\Sigma_{jj}^{1/2}} \right\}^2}
		&\lesssim_K k_{\theta} \cdot \theta \Sigma_{ii}^{1/2}\Sigma_{jj}^{1/2} \log(p)\log(k_\theta) \\
		&\lesssim_K \theta^2 \log(p)\log(n) \Sigma_{ii}^{1/2}\Sigma_{jj}^{1/2}.
	\end{align*}
	Similarly, if $k_{\theta}<n$ then \eqref{eqn:elemLambdaEstimate} and \eqref{eq:LambdaKLBDecoupled} yield
	\begin{align*}
		\sum_{k=k_{\theta}+1}^n ((\Lambda_k)_{ii}(\Lambda_k)_{jj} + \Sigma_{ii}^{1/2}\Sigma_{jj}^{1/2}) e^{-c_2 \min\left\{ \frac{(\Lambda_k)_{ii}}{\Sigma_{ii}^{1/2}}, \frac{(\Lambda_k)_{jj}}{\Sigma_{jj}^{1/2}} \right\}^2}
		&\lesssim_K  \sum_{k=k_{\theta}+1}^n \theta \Sigma_{ii}^{1/2}\Sigma_{jj}^{1/2} \log(p)\log(k) \cdot e^{-\log(k^2)} \\
		&\lesssim \theta \Sigma_{ii}^{1/2}\Sigma_{jj}^{1/2} \log(p).
	\end{align*}
	Collecting these estimates we obtain \eqref{eqn:sumLambdaEstimate1Decoupled}. 
}
\newJ{
    For \eqref{eqn:sumLambdaEstimate2Decoupled} we use the same split of the sum at $k_\theta$. \newS{First, note that by \eqref{eqn:lambdaTraceEst}} 
    \begin{align*}
    	\left( \sum_{1\le k \le k_\theta} \frac{2 \tr(\LAMBDA_k^2) }{n^2}
    	\big( \tr(\LAMBDA_k^2) + \tr(\SIGMA) \big) e^{-c_2 \min\left\{ \frac{(\Lambda_k)_{ii}}{\Sigma_{ii}^{1/2}} \colon i \in [p] \right\}^2}  \right)^{1/2} 
    	&\lesssim_K \frac{1}{n} \left(  k_\theta \cdot \theta^2 \log(p)^2 \log(k_\theta)^2 \tr(\SIGMA)^2 \right)^{1/2} \\
    	&\lesssim_K \frac{\theta^2}{n} \log(p) \log(n) \tr(\SIGMA).
    \end{align*}
    Moreover, \newS{by} \eqref{eq:LambdaUBDecoupled} and \eqref{eq:LambdaKLBDecoupled}, we obtain that
    \begin{align*}
    	& \left( \sum_{k=k_\theta+1}^n \frac{2 \tr(\LAMBDA_k^2) }{n^2}
    	\big( \tr(\LAMBDA_k^2) + \tr(\SIGMA) \big) e^{-c_2 \min\left\{ \frac{(\Lambda_k)_{ii}}{\Sigma_{ii}^{1/2}} \colon i \in [p] \right\}^2}  \right)^{1/2}
    	\\
     &\qquad \lesssim_K \left( \sum_{k=k_\theta+1}^n \frac{\theta^2}{n^2} \log(p)^2 \log(k)^2 \tr(\SIGMA)^2
    	e^{-\log(k^2)}  \right)^{1/2} \\
    	&\qquad \le \frac{\theta}{n} \log(p) \tr(\SIGMA).
    \end{align*}
    Combining the last two bounds yields \eqref{eqn:sumLambdaEstimate2Decoupled}.
}
\end{proof}
\newJ{
\begin{proof}[Proof of Lemma \ref{lem:biasControlUnifiedDecoupled}]
	We compute that with probability at least $1-4e^{-\theta}$
	\begin{align}
		\label{eqn:normEstPosDecoupled}
		&\pnorm{ \sum_{k=1}^n \E_{k-1}( \tfrac{1}{n} \Y_k\bar{\Y}_k^T) - \SIGMA }{} \nonumber \\
		&=  \sup_{\v,\w \in \R^p \ : \ \|\v\|_2, \|\w\|_2 \leq 1} \abs{\sum_{i,j=1}^p \Big( \sum_{k=1}^n \E_{k-1}( \tfrac{1}{n} \Y_k\bar{\Y}_k^T) - \SIGMA \Big)_{i,j} v_i w_j }{} \nonumber\\
		&
		\le  \sup_{\v,\w \in \R^p \ : \ \|\v\|_2, \|\w\|_2 \leq 1} \sum_{i,j=1}^p \frac{1}{n} \sum_{k=1}^n \abs{ \Big( \E_{k-1}(  \Y_k\bar{\Y}_k^T) - \SIGMA \Big)_{i,j} }{} \abs{v_i w_j}{}  \nonumber\\
		&\lesssim_K \sup_{\v,\w \in \R^p \ : \ \|\v\|_2, \|\w\|_2 \leq 1} \frac{1}{n} \sum_{i,j=1}^p \sum_{k=1}^n ((\Lambda_k)_{ii}(\Lambda_k)_{jj} + \Sigma_{ii}^{1/2}\Sigma_{jj}^{1/2}) e^{-c_2 \min\left\{ \frac{(\Lambda_k)_{ii}}{\Sigma_{ii}^{1/2}}, \frac{(\Lambda_k)_{jj}}{\Sigma_{jj}^{1/2}} \right\}^2} \cdot \abs{v_i w_j}{}  \nonumber\\
		&\lesssim_K \sup_{\v,\w \in \R^p \ : \ \|\v\|_2, \|\w\|_2 \leq 1} \frac{1}{n} \sum_{i,j=1}^p  \theta^2 \log(p)\log(n) \Sigma_{ii}^{1/2}\Sigma_{jj}^{1/2} \cdot v_i w_j \nonumber\\
		&=  \frac{\theta^2 \log(p)\log(n)}{n} \cdot \| \diag(\SIGMA)^{\odot 1/2} \|_2^2 \nonumber\\
		&= \theta^2 \log(p)\log(n) \frac{\tr(\SIGMA)}{n},
	\end{align}
	where we applied Lemma~\ref{lem:linftyBiasEstDecoupled} conditionally and used Lemma~\ref{lem:SumBoundsDecoupled}.
\end{proof}
}
\newJ{
We \newS{now have} all the tools \newS{needed} to prove Theorem \ref{thm:OperatorDitheredMaskDecoupled}.
\begin{proof}[Proof of Theorem~\ref{thm:OperatorDitheredMaskDecoupled}]	
	The proof works analogously to the proof of Theorem \ref{thm:OperatorDitheredMask}.
	We again abbreviate $\Y_k = \LAMBDA_k \sign(\X_k + \LAMBDA_k \Tau_k)$ and $\bar{\Y}_k = \LAMBDA_k \sign(\X_k + \LAMBDA_k \bar{\Tau}_k)$, and set $\LAMBDA_k$ to be the diagonal matrix with $\diag(\LAMBDA_k) = \sqrt{\frac{4}{c_2} \log(k^2)} \LAmbda_k$ for $k\geq 1$, where $\LAmbda_k$ has been defined in \eqref{eq:lambdaAdaptiveEntrywise} and $c_2$ is the constant from Lemmas~\ref{lem:linftyBiasEstDecoupled}. 
	We also use \newS{a split similar to} \eqref{eqn:splitExpDithered}, but set $\M = \boldsymbol{1}$ and replace the global dithering scale $\tilde\lambda_k$ by the entry-wise scales $\LAMBDA_k$, i.e., we estimate 
	\begin{align}
		\label{eqn:splitExpDitheredDecoupled}
	    \pnorm{\newS{\hat{\SIGMA}'_n} -  \SIGMA}{}
		\le \pnorm{ \newS{\hat{\SIGMA}'_n} - \sum_{k=1}^n \E_{k-1}( \tfrac{1}{n} \Y_k\bar{\Y}_k^T) }{} + \pnorm{ \sum_{k=1}^n \E_{k-1}( \tfrac{1}{n} \Y_k\bar{\Y}_k^T) - \SIGMA }{},
	\end{align}
	Applying Lemma~\ref{lem:biasControlUnifiedDecoupled} to the second term on the right hand side of  \eqref{eqn:splitExpDitheredDecoupled} yields
	\begin{align}
		\label{eq:SecondTermBoundLoperatorDecoupled}
		\left\| \sum_{k=1}^n \E_{k-1}( \tfrac{1}{n} \Y_k\bar{\Y}_k^T) - \SIGMA \right\|  \lesssim_{K} \theta^2 \log(p)\log(n) \frac{\tr(\SIGMA)}{n}.
	\end{align} 
	To complete the proof, we will estimate the first term on the right hand side of the modified version of \eqref{eqn:splitExpDitheredDecoupled} using Theorem~\ref{thm:matrixBRintro}. To this end, define
	\begin{align*}
		{\XI}_k 
		= \frac{1}{n} \round{ \Y_k\bar{\Y}_k^T - \E_{k-1}\round{\Y_k\bar{\Y}_k^T} }{}
		\quad \text{ \sjoerd{so} that } \quad
		\newS{\hat{\SIGMA}'_n} - \sum_{k=1}^n \E_{k-1}( \tfrac{1}{n} \Y_k\bar{\Y}_k^T) = \sum_{k=1}^n \XI_k.
	\end{align*}
	Since $\E\|\XI_k\| < \infty$ and $\E_{k-1} \XI_k = \0$ by construction, the sequence $\XI_1,\dots,\XI_n$ forms a matrix martingale difference sequence with respect to the filtration \eqref{eqn:filtrationDef} and we can hence apply Theorem~\ref{thm:matrixBRintro}. Let us now estimate the quantities appearing in the bounds of Theorem \ref{thm:matrixBRintro}. For any $1\leq k\leq n$,
	\begin{align*}
		\|{\XI}_k\| = \frac{1}{n} \| \Y_k\bar{\Y}_k^T - \E_{k-1}\round{\Y_k\bar{\Y}_k^T} \| 
		\le \frac{2}{n} \tr(\LAMBDA_k^2),
	\end{align*}
	where we used that 
	\begin{align*}
		\| \Y_k\bar{\Y}_k^T \| 
		& = \|\newS{\LAMBDA_k \sign(\X_k + \LAMBDA_k \Tau_k)) \sign(\X_k + \LAMBDA_k \bar\Tau_k)^T \LAMBDA_k^T }\| \newS{\leq \tr(\LAMBDA_k^2).}
	\end{align*}
    \newS{Combining \eqref{eqn:lambdaTraceEst} and Lemma~\ref{lem:LqtoTailBound} yields}
	\begin{equation}
		\label{eqn:XikMaxNormEstimateLqDecoupled}
		\max_{1\leq k\leq n} (\E\|{\XI}_k\|^q)^{1/q} \lesssim \frac{1}{n} \max_{1\leq k\leq n} \| \tr(\LAMBDA_k^2) \|_{L^q}\lesssim_K  \log(p) \log(n) \tr(\SIGMA) \frac{\sqrt{q}}{n}.
	\end{equation} 
	Next, using that $(\Y_k,\bar{\Y}_k)$ and $(\bar{\Y}_k,\Y_k)$ are identically distributed, we get 
	\begin{align*}
		& \Big\|\Big(\sum_{k=1}^n \mathbb{E}_{k-1}(\XI_k^T\XI_k) \Big)^{1/2}\Big\| \\
		&\qquad = \pnorm{ \sum_{k=1}^n \frac{1}{n^2} \left( \mathbb{E}_{k-1}[(\Y_k\bar{\Y}_k^T)^T(\Y_k\bar{\Y}_k^T)] -  \Big( \johannes{\E_{k-1}\round{ \Y_k\bar{\Y}_k^T }}{} \Big)^T  \johannes{\E_{k-1}\round{ \Y_k\bar{\Y}_k^T }}{} \right) }{}^{1/2}\\
		&\qquad = \pnorm{ \sum_{k=1}^n \frac{1}{n^2} \left( \mathbb{E}_{k-1}[(\bar{\Y}_k\Y_k^T)^T(\bar{\Y}_k\Y_k^T)] -  \Big( \johannes{\E_{k-1}\round{ \bar{\Y}_k\Y_k^T }}{} \Big)^T  \johannes{\E_{k-1}\round{ \bar{\Y}_k\Y_k^T }}{} \right) }{}^{1/2}
	\end{align*}
	Interchanging the roles of $\Y_k$ and $\bar{\Y}_k$ yields
	\begin{align*}
		& \Big\|\Big(\sum_{k=1}^n \mathbb{E}_{k-1}(\XI_k\XI_k^T) \Big)^{1/2}\Big\| \\
		& \qquad = \pnorm{ \sum_{k=1}^n \frac{1}{n^2} \left( \mathbb{E}_{k-1}[(\bar{\Y}_k\Y_k^T)^T(\bar{\Y}_k\Y_k^T)] -  \Big( \johannes{\E_{k-1}\round{ \bar{\Y}_k\Y_k^T }}{} \Big)^T  \johannes{\E_{k-1}\round{ \bar{\Y}_k\Y_k^T }}{} \right) }{}^{1/2}.
	\end{align*}
	Since by \eqref{eq:Kadison}
	$$\Big( \johannes{\E_{k-1}\round{ \bar{\Y}_k\Y_k^T }}{} \Big)^T \johannes{\E_{k-1}\round{ \bar{\Y}_k\Y_k^T }}{} 
	\preceq \E_{k-1} \Big( ( \bar{\Y}_k\Y_k^T )^T (\bar{\Y}_k\Y_k^T ) \Big) $$ 
	we find
	\begin{align*}
		& \max\left\{ \Big\|\Big(\sum_{k=1}^n \mathbb{E}_{k-1}(\XI_k^T\XI_k) \Big)^{1/2}\Big\|, \Big\|\Big(\sum_{k=1}^n \mathbb{E}_{k-1}(\XI_k\XI_k^T) \Big)^{1/2}\Big\| \right\} \\
		&\quad \leq \left( \sum_{k=1}^n \frac{2 }{n^2} \pnorm{ \E_{k-1} \Big( ( \bar{\Y}_k\Y_k^T )^T (\bar{\Y}_k\Y_k^T ) \Big)}{} \right)^{1/2}
	    = \left( \sum_{k=1}^n \frac{2 \tr(\LAMBDA_k^2) }{n^2} \pnorm{ \E_{k-1} \Big( \Y_k \Y_k^T \Big)}{} \right)^{1/2}\\
		& \quad \leq \left( \sum_{k=1}^n \frac{2 \tr(\LAMBDA_k^2) }{n^2} \pnorm{  \johannes{\E_{k-1}\round{\Y_k\Y_k^T}}{} - (\SIGMA-\diag(\SIGMA)+ \LAMBDA_k^2) }{} \right. \\
		& \quad \qquad \qquad \qquad \qquad \qquad \qquad \qquad \qquad + \left. \sum_{k=1}^n \frac{2 \tr(\LAMBDA_k^2) }{n^2} \pnorm{  \SIGMA-\diag(\SIGMA)+\LAMBDA_k^2 }{} \right)^{1/2}.
	\end{align*}
	By the same reasoning as in \eqref{eqn:normEstPosDecoupled} together with Lemma~\ref{lem:linftyBiasEstDecoupled} and Lemma \ref{lem:SumBoundsDecoupled}, we obtain with probability at least $1-4e^{-\theta}$ that
	\begin{align*}
		&\left( \sum_{k=1}^n \frac{2 \tr(\LAMBDA_k^2) }{n^2} \pnorm{  \johannes{\E_{k-1}\round{\Y_k\Y_k^T}}{} - (\SIGMA-\diag(\SIGMA)+ \LAMBDA_k^2) }{} \right)^{1/2} \\
		&\le  \left( \sum_{k=1}^n \frac{2 \tr(\LAMBDA_k^2) }{n^2} \Big( \sup_{\v^k,\w^k \in \R^p \ : \ \|\v^k\|_2, \|\w^k\|_2 \leq 1} \sum_{i,j=1}^p
		(\Lambda_{ii}\Lambda_{jj} + \Sigma_{ii}^{1/2}\Sigma_{jj}^{1/2}) e^{-c_2 \min\left\{ \frac{\Lambda_{ii}}{\Sigma_{ii}^{1/2}}, \frac{\Lambda_{jj}}{\Sigma_{jj}^{1/2}} \right\}^2} \cdot |u_i^k v_j^k| \Big) \right)^{1/2} \\
		&\le  \left( \sum_{k=1}^n \frac{2 \tr(\LAMBDA_k^2) }{n^2}
		\big( \tr(\LAMBDA_k^2) + \tr(\SIGMA) \big) e^{-c_2 \min\left\{ \frac{\Lambda_{ii}}{\Sigma_{ii}^{1/2}} \colon i \in [p] \right\}^2}  \right)^{1/2} \\
		&\lesssim_K \frac{\theta^2}{n} \log(p) \log(n) \tr(\SIGMA).
	\end{align*}
	Moreover, on the same event we find by Lemma \ref{lem:SumBoundsDecoupled} that
	\begin{align*}
		\left( \sum_{k=1}^n \frac{2 \tr(\LAMBDA_k^2) }{n^2} \pnorm{  \SIGMA-\diag(\SIGMA)+\LAMBDA_k^2 }{} \right)^{1/2} 
		&\lesssim \left( \sum_{k=1}^n \frac{2 \tr(\LAMBDA_k^2) }{n^2} (\pnorm{  \SIGMA}{} + \pnorm{\LAMBDA_k^2}{} ) \right)^{1/2} \\
		&\lesssim_K \frac{\theta}{\sqrt{n}} \log(p) \log(n) \tr(\SIGMA)^{1/2} \| \SIGMA \|^{1/2}.
 	\end{align*}
	Thus, we obtain with probability at least $1-4e^{-\theta}$ that for any $\theta \ge \max\{c, 2\log(p),2\log(n)\}$ 
	\begin{align}
		\label{eqn:XikSquareFunEstimateDecoupled}
		& \max\left\{ \Big\|\Big(\sum_{k=1}^n \mathbb{E}_{k-1}(\XI_k^T\XI_k) \Big)^{1/2}\Big\|, \Big\|\Big(\sum_{k=1}^n \mathbb{E}_{k-1}(\XI_k\XI_k^T) \Big)^{1/2}\Big\| \right\} \nonumber\\
		& \quad \lesssim_K \frac{\theta}{\sqrt{n}} \log(p) \log(n) \tr(\SIGMA)^{1/2} \| \SIGMA \|^{1/2} + \frac{\theta^2}{n} \log(p) \log(n) \tr(\SIGMA).
	\end{align}
	By translating these high-probability estimates via Lemma \ref{lem:LqtoTailBound} into moment bounds, we find for $q \ge \max\{\log(p),\log(n)\}$ that
	\begin{align}
		\label{eqn:XikSquareFunEstimateLqDecoupled}
		& \max\left\{\left(\mathbb{E}\left\|\left(\sum_{k=1}^n \mathbb{E}_{k-1}(\XI_k^T\XI_k) \right)^{1/2}\right\|^q\right)^{1/q}, \left(\mathbb{E}\left\|\left(\sum_{k=1}^n \mathbb{E}_{k-1}(\XI_k\XI_k^T) \right)^{1/2}\right\|^q\right)^{1/q} \right\} \nonumber \\
		& \quad \lesssim_K \| \SIGMA \| \log(p) \log(n) \left( \sqrt{\frac{q^2}{n} \frac{\tr(\SIGMA)}{\| \SIGMA \|}} + \frac{q^2}{n} \frac{\tr(\SIGMA)}{\| \SIGMA \|} \right).
	\end{align}
	By using Theorem~\ref{thm:matrixBRintro} and inserting \eqref{eqn:XikMaxNormEstimateLqDecoupled} and \eqref{eqn:XikSquareFunEstimateLqDecoupled} we finally get for any $q \ge \max\{\log(p),\log(n)\}$ that 
	\begin{align*}
		& \left(\mathbb{E}\left\| \newS{\hat{\SIGMA}'_n} - \sum_{k=1}^n \E_{k-1}( \tfrac{1}{n} \Y_k\bar{\Y}_k^T) \right\|^q\right)^{1/q} \\
		& \qquad \lesssim_K q^{3/2} \| \SIGMA \| \log(p) \log(n) \left( \sqrt{\frac{q^2}{n} \frac{\tr(\SIGMA)}{\| \SIGMA \|}} + \frac{q^2}{n} \frac{\tr(\SIGMA)}{\| \SIGMA \|} \right).
	\end{align*}
	The result now immediately follows by applying Lemma~\ref{lem:LqtoTailBound} to the previous moment bound and inserting the resulting tail estimate with \eqref{eq:SecondTermBoundLoperatorDecoupled} into the right-hand side of \eqref{eqn:splitExpDitheredDecoupled}. 
\end{proof}
}
}

\bibliography{mybib}{}
\bibliographystyle{plain}

\end{document}